\definecolor{MyLinkColor}{rgb}{0,0,0.4}
\numberwithin{equation}{section}
\newcommand{\im}{\mathop{\rm Im}\nolimits}
\newcommand{\tr}{\mathop{\rm tr}\nolimits}
\newcommand{\HH}{\mathcal{H}}
\newcommand{\h}{\rho}
\newcommand{\0}{\Omega}
\newcommand{\e}{\varepsilon}
\newcommand{\p}{\partial}
\newcommand{\wt}{\widetilde}
\newcommand{\ov}{\overline}
\newcommand{\cO}{\mathcal{O}}
\newcommand{\D}{{\mathcal D}}
\newcommand{\kF}{\mathcal{F}}
\newcommand{\cK}{{\mathcal K}}
\newcommand{\kL}{\mathcal{L}}
\newcommand{\R}{\mathbb{R}}
\newcommand{\N}{\mathbb{N}}
\newcommand{\X}{\mathbb{X}}
\newcommand{\Y}{\mathbb{Y}}
\newtheorem{thm}{Theorem}[section]
\newtheorem{prop}[thm]{Proposition}
\newtheorem{lemma}[thm]{Lemma}
\newtheorem{defn}[thm]{Definition}
\theoremstyle{remark} 
\newtheorem{rem}[thm]{Remark}
\newtheorem{ex}[thm]{Example}
\numberwithin{equation}{section}
\title[Stratified water waves with singular density gradients]{Stratified periodic water waves  with singular \\ density gradients}
\author[J. Escher]{Joachim Escher}
\address{Institut f\"ur Angewandte Mathematik, Leibniz Universit\"at Hannover, Welfengarten~1, 30167 Hannover, Deutschland.}
\email{escher@ifam.uni-hannover.de}
\author[P. Knopf]{Patrik Knopf}
\address{Fakult\"at f\"ur Mathematik, Universit\"at Regensburg,   93053 Regensburg, Deutschland.}
\email{patrik.knopf@ur.de}
\email{bogdan.matioc@ur.de}
\author[C. Lienstromberg]{Christina Lienstromberg}
\address{Institut f\"ur Angewandte Mathematik, Universit\"at Bonn, Endenicher Allee~60, 53115 Bonn, Deutschland.}
\email{lienstromberg@iam.uni-bonn.de}
\author[B.--V. Matioc]{Bogdan--Vasile Matioc}
\subjclass[2010]{35Q35; 35B32; 76B70; 76B47.}
\keywords{Euler equations; Traveling waves; Stratified fluid; Singular density gradient}
\begin{document}

\begin{abstract}
We consider Euler's equations for free surface waves traveling on a body of density stratified water in the scenario when 
gravity and surface tension act as restoring forces. 
The flow is continuously stratified, and the water  layer is bounded from below by an impermeable horizontal bed. 
For this problem we establish  three equivalent  classical formulations in a suitable setting of strong solutions 
which may describe nevertheless waves with singular density gradients. 
Based upon this equivalence we then   construct two-dimensional symmetric periodic traveling waves that are monotone between each crest and trough.
Our analysis uses, to a large extent, the availability of a weak formulation of the water wave problem, the regularity properties of the corresponding weak solutions, 
and methods from nonlinear functional analysis.   
\end{abstract}

\maketitle

 
\section{Introduction}\label{Sec:0}
Stratification is a phenomenon that is common in ocean flows where in the presence of salinity and under the influence of 
the gravitational force a heterogeneity in the fluid is produced.
Stratification corresponds to the formation of fluid layers, normally arranged horizontally with the less dense layers  being located on top of the denser ones.
 This phenomenon  may be caused  by many other factors including temperature, pressure, topography and oxygenation.
Because of the plethora of effects resulting from stratification, such flows have received  much attention especially in geophysical fluid dynamics.
In the setting of traveling stratified waves  the problem is modeled by the stationary Euler equations 
 for incompressible fluids, subject to   natural boundary conditions, cf. \eqref{eq:1}.
The study of two-dimensional stratified flows  dates back to the pioneering work of Dubreil-Jacotin.
In 1937 Dubreil-Jacotin \cite{DJ37} constructed small-amplitude stratified traveling gravity waves by using power series expansions. 
Previously in \cite{DJ32} she showed that Gerstner's explicit solution \cite{Co01, Ge09} can be accommodated to describe exact traveling gravity 
waves with an arbitrary stratification. Furthermore, related to Gerstner's solution, there is a further exact solution describing 
an edge wave propagating along a sloping beach \cite{Ra11, C01a, MA13} allowing even for an arbitrary stratification.
Recently also other exact and explicit solutions of stratified flows in different geophysical regimes have been found, cf. \cite{HM19, HM19a, HM18, CJ16, Con13, CJ12, Kl19}. 

Many of the papers dedicated to the  stratified water   wave problem consider the vertical stratification to be fairly smooth. 
Small amplitude periodic gravity water waves possessing a linear stratification  have been constructed in \cite{EMM11}. 
These waves may contain critical layers and stagnation points and the authors in \cite{EMM11} provide
also the qualitative picture of the flow beneath the constructed waves.
Small amplitude periodic capillary-gravity waves with  sufficiently regular density which may still contain critical layers have been found in \cite{HM13} by means of local bifurcation. 
The local bifurcation branches of solutions to the stratified water wave problem have been extended by using
global bifurcation theory to global branches in \cite{HM14b}.
The papers \cite{EMM11, HM13, HM14b} use the  Long-Yih formulation \cite{L53, Y60} (see \eqref{eq:P}) of the problem whose availability is facilitated by the fact that the density is sufficiently regular.
When excluding critical layers and stagnation points the stratified wave problem can be considered by using Dubreil-Jacotin's formulation (see \eqref{eq:Q}-\eqref{C4}).
This approach has been followed in \cite{Wal09, Wa14a, Wal14b} where -- by means of local and global bifurcation theory -- small and large amplitude
 stratified periodic water waves of finite depth are constructed both in the presence and absence of surface tension.
The existence of solitary free surface water waves with general regular density distribution, together with a qualitative study of such flows,
 has been provided only recently \cite{WAl18}, using again Dubreil-Jacotin's formulation in their treatise.
Qualitative properties of stratified water waves with regular density, such as symmetry, regularity, and the unique determination of the wave when knowing the pressure on the bed and the fluid stratification, have been  addressed in \cite{WAl18b, HM, W13, Wal09a}.

In  ocean flows, however, the  density varies  strongly in thin layers called pycnoclines which exhibit  sharp density gradients, cf., e.g., \cite{Pe79, CB94, P13}.
For this reason some of the research \cite{CB94, Ma12a, Mar16, Mar17, CI15, CIM16, T80,  ASW16, AT89} is restricted to so-called  layered models which consider  the flow as consisting of a finite
number of vertical layers each of them having uniform density. 
These layers are separated by internal waves which are mainly driven by the density difference between the layers (some models  also consider surface tension effects). 
  In this  paper we consider a continuous stratification, but  
 allow for solutions with a density gradient which
  is merely $L_r$-integrable
  with $r\in(1,\infty)$ arbitrarily close to $1$.
  A similar setting  has been studied in \cite{WC16} but in the absence of surface tension forces. 
  The authors of \cite{WC16} deal with a layered model with the density in each layer  varying continuously in such a way that the density gradient 
  is $L_r$-integrable, but with $r>2$.
   The choice $r>2$ is related to the Sobolev embedding $W^1_r(\R^2)\hookrightarrow {\rm C}^{1-2/r}(\R^2).$
In this regime the different formulations  of the water wave problem mentioned above are equivalent in the setting of periodic Sobolev solutions.  
Our first  main result is an equivalence result for the three formulations of the problem in a suitable
 setting of strong solutions, cf. Theorem \ref{T:Ep} (see Theorem \ref{T:E0} for the case when surface tension is neglected).  
 The equivalence in these theorems holds for  $r\in[1,\infty)$. 
For $r\in[1,2]$ the Sobolev regularity is too weak for the equations to be realized in $L_r$-spaces, and therefore our notion of strong solutions involves some complementary 
H\"older regularity. 
Here the H\"older exponent $\alpha=1-1/r\in[0,\infty)$ results from the embedding $W^1_r(\R)\hookrightarrow {\rm C}^{1-1/r}(\R).$
  The main result Theorem~\ref{MT1}, which relies on the equivalence in Theorem~\ref{T:Ep}, establishes the existence of infinitely many periodic solutions to the stratified water wave problem
  having merely a $L_r$-integrable density gradient.
  Moreover, the wave profiles  are symmetric with respect to crest and through lines and strictly monotone in between them. 
 The proof of Theorem \ref{MT1} uses the Crandall--Rabinowitz theorem \cite[Theorem~1.7]{CR71} on  bifurcation  from simple eigenvalues
  in the context of a weak interpretation of Dubreil-Jacotin's formulation. 
For traveling water waves this idea was first used by Constantin and Strauss in \cite{CS11} to construct homogeneous periodic gravity water waves with discontinuous vorticity.
The situation of heterogeneous water waves is slightly different as the equations in the bulk cannot be recast in divergence form, see also \cite{WC16}.
Due to the presence of surface tension we need to deal with a second order nonlinear  equation on the surface boundary  corresponding to the dynamic boundary condition at the waves surface.
 Here we use a recent trick employed first in \cite{MM14, CM14b} (a similar idea appears also in \cite{M14, HM14b, ASW16}) to transform this equation into a Dirichlet boundary condition perturbed by a nonlinear and nonlocal part of order $-1$. 
A particular feature of our analysis is that we fix both the depth of the fluid as well as  the volume of  the fluid (within a period). 
This fact in combination with the weak regularity of the density gradient reduces the number of possible bifurcation parameters. 
For this reason the best (probably only) choice for a bifurcation parameter 
is the wavelength $\lambda$  (the wavelength has also been used in \cite{EW15} as one of the bifurcation  parameters). 
It is worth pointing out that this choice provides a remarkable identity in  (see \eqref{LEM:DW} below)
 that leads us to a very simple and elegant dispersion relation, cf. Lemma \ref{L:39}.

The paper is organized as follows: In Section \ref{Sec:1} we introduce the three formulations of the problem and we establish their equivalence in Theorems \ref{T:Ep} (and Theorem \ref{T:E0}).
Moreover, we state our main result in Theorem \ref{MT1} on the existence of laminar and nonlaminar flow solutions and some
qualitative properties. 
In Section \ref{Sec:2} we first introduce the notion of a weak solution to Dubreil-Jacotin's formulation and establish, 
by means of a shooting method, the existence of at least one laminar flow solution to this latter formulation. 
This solution does not depend on the horizontal variable, having thus parallel and flat streamlines, and it solves the problem for each value of $\lambda$.   
This set of laminar solutions (we have a solution for each $\lambda>0$) is then seen as the trivial branch of solutions to the problem. 
Merely the existence of the laminar solution imposes some restriction on the physical properties  of the flows, cf. \eqref{RES2} and Example~\ref{Ex:1}. 
 In Section \ref{Sec:3} we reformulate the equations as an abstract bifurcation problem and identify, by using methods  from  nonlinear functional analysis,
  a particular value $\lambda_*$ of the wavelength parameter where a local branch of nonlaminar weak solutions arises from the set of laminar solutions.
 For this we need to impose a further, quite explicit restriction in \eqref{RES3}.
  The proof of Theorem \ref{MT1} is then completed by showing that the weak solutions that were found are in fact strong solutions, cf. Proposition \ref{P:51}. 
  This gain of regularity relies on the regularity result in Theorem, \ref{T:RR} which is inspired by ideas presented in \cite{EC11} and \cite{EM14}.

 \section{Mathematical formulations and the main results}\label{Sec:1}
We now present three classical formulations of the steady water wave problem for stratified fluids.
We start with the classical Euler formulation. 
The motion of an  inviscid, incompressible, and stratified fluid is described by the Euler equations
\begin{equation*} 
\left\{
\begin{array}{rllllll}
\h (u_t+   u u_x+  vu_y)&=&-P_x \\
\h (v_t+  uv_x+ vv_y)&=&-P_y-g\h \\
\h_t+ \h_x u+\h_yv&=&0\\
u_x+v_y&=&0,
\end{array}
\right.
\end{equation*}
where $\h$ is the fluid's density, $u$ is the horizontal velocity, 
$v$ is the vertical velocity, $P$ is the pressure, and $g$ is the gravitational acceleration. 
The fluid domain is bounded from below by the impermeable flat bed $y=-d$,
where $d$ is a  fixed positive constant, and $y=\eta(t,x)$ denotes the wave surface.  
In addition to the conservation of momentum which is expressed by the first two equations of the system, the fluid is assumed to be incompressible and mass conserving (these properties correspond to the third and the fourth equation, respectively).
Our analysis is restricted to the physically relevant case of positive density, that is we assume throughout this paper that there exists a constant 
$\rho_0>0$  such that
\begin{align}\label{CR}
\rho\geq \rho_0.
\end{align}

The equations in the fluid domain are subject to the following boundary conditions 
\begin{equation*} 
\left\{
\begin{array}{rlllllll}
P&=&-\displaystyle\frac{\sigma\eta_{xx}}{(1+\eta_x^2)^{3/2}}&\text{on $y=\eta(t,x),$}\\
v&=&\eta_t +u\eta_x&\text{on $y=\eta(t,x),$}\\
v&=&0&\text{on}\ y=-d,
\end{array}
\right.
\end{equation*}
where the atmospheric pressure is set to  zero and $\sigma\geq0$ denotes the surface tension coefficient.
As we are interested in periodic waves we introduce
the positive constant $\lambda$ to denote the associated (minimal) wavelength.
Moreover, we require that
\begin{equation*} 
\int_0^\lambda \eta(t,x)\, dx=0 
\end{equation*}
at each time $t$. 
This condition means in particular that the fluid volume is fixed in each period of the fluid domain. 
Traveling periodic waves correspond to solutions of the previously introduced equations that exhibit a $(t,x)$-dependence 
of the form 
\begin{equation*} 
(u,v,P,\rho)(t,x,y)=(u,v,P,\rho)(x-ct,y)\qquad\text{and}\qquad \eta(t,x)=\eta(x-ct),
\end{equation*}
where $c>0$ is the wave speed, and which are $\lambda$-periodic in $x$.  
Observed from a frame that moves with the wave speed  $c$, traveling waves appear to be steady and we are left with the 
free boundary value problem
\begin{equation}\label{eq:1}
\left\{
\begin{array}{rllllll}
\h (u-c) u_x+\h vu_y&=&-P_x&\text{in $ \0_\eta,$}\\
\h(u-c)v_x+\h vv_y&=&-P_y-g\h&\text{in $ \0_\eta,$}\\
 (u-c)\h_x +v\h_y&=&0& \text{in $ \0_\eta,$}\\
u_x+v_y&=&0& \text{in $ \0_\eta,$}\\
P&=&-\displaystyle\frac{\sigma\eta''}{(1+\eta'^2)^{3/2}}&\text{on $y=\eta(x),$}\\
v&=& (u-c)\eta'&\text{on $y=\eta(x),$}\\
v&=&0&\text{on $y=-d,$}\\
\displaystyle\int_0^\lambda \eta(x)\, dx&=&0,
\end{array}
\right.
\end{equation}
where
 $$\0_\eta:=\{(x,y)\,:\,x\in\R,\, -d<y<\eta(x)\}.$$
 We point out that since $u$ and $c$ appear only in terms of the difference $u-c$ in \eqref{eq:1}, we may view the quintuplet
 $(u-c,v,P,\rho,\eta)$ as being the unknown, each of these functions additionally being $\lambda$-periodic with respect to the 
 horizontal variable $x$.

In order to study problem \eqref{eq:1} analytically it is useful to consider equivalent formulations.
To this end we define the so-called stream function $\psi$ by the relations
 \[
 \nabla\psi:=(-\sqrt\h v,\sqrt{\h}(u-c)) \, \, \, \text{in $\0_\eta$}\qquad\text{and}\qquad \psi=0 \, \, \, \text{on $y=\eta(x)$.}
 \]
One may observe that in the moving frame the streamlines of the flow coincide
with the level curves of the stream function.
Moreover, the density $\rho$ as well as the total hydraulic head
\[ 
E:=P+\h\frac{(u-c)^2+v^2}{2}+g\h y\qquad\text{in $\0_\eta$}
\]
are both constant along the streamlines.
In particular, if we require that 
\begin{align}\label{C1}
 \sup_{\0_\eta}(u-c)<0,
\end{align}
a condition which is a priori satisfied for homogeneous irrotational water waves, 
the hodograph transformation $\HH: \ov\0_\eta\to \ov\0$ defined by
\begin{equation}\label{HHH}
\HH(x,y):=(q(x,y),p(x,y)):=(x,-\psi(x,y))
\end{equation}
is a bijection. 
Here $\0:=\R\times (p_0,0) $ and  $p_0:=-\psi\big|_{y=-d}$ is a negative constant.
Using this property one can find two functions $\ov\h, \, \beta:[p_0,0]\to\R$, the so-called streamline density 
function and the Bernoulli function, respectively,
such that 
\[\text{$\rho\circ  \HH^{-1} =\ov\h$\quad and\quad $-\p_p (E\circ \HH^{-1})=\beta$.}\]
In particular $\h(x,y)=\ov\h(-\psi(x,y))$ in $\0_\eta.$
As the density usually increases with depth we restrict our considerations to the stably stratified regime defined by the
 inequality\footnote{The assumption that the density is non-decreasing with depth is not needed for the equivalence result in Theorem \ref{T:Ep} (or Theorem \ref{T:E0}), 
but is used to a large extend in the proof of the bifurcation result in Theorem \ref{MT1}.} 
\begin{align}\label{RES1}
\ov\rho'\leq 0.
\end{align} 

These considerations lead one to the  Long-Yih \cite{L53, Y60} formulation of the hydrodynamical problem~\eqref{eq:1}:
\begin{equation}\label{eq:P}
\left\{
\begin{array}{rlllllll}
 \Delta \psi &=&g y\ov \h'(-\psi)+\beta(-\psi)& \text{in $ \0_\eta$}, \\
 \psi&=&0 & \text{on $ y=\eta(x),$}\\
 \psi&=&-p_0 & \text{on $y=-d,$}\\
 \displaystyle |\nabla \psi|^2 -\frac{2\sigma\eta''}{(1+\eta'^2)^{3/2}}+2g\ov\h(0)y&=&Q &\text{on $y=\eta(x).$}
\end{array}
\right.
\end{equation}
 Under the assumption \eqref{CR} of positive density   the condition \eqref{C1} is equivalent  to
\begin{align}\label{C2}
  \sup_{\0_\eta}\psi_y<0.
\end{align}
The constant $Q$ in the equation $\eqref{eq:P}_4$ is related to the energy $E$. 
The equation $\eqref{eq:P}_4$ identifies $Q$ for waves with zero integral mean as follows
\begin{align}\label{C3}
 Q=\frac{1}{\lambda}\int_{0}^\lambda|\nabla \psi|^2(x,\eta(x))\, dx.
\end{align}

Using the (partial) hodograph transformation $\HH$, a further equivalent formulation of \eqref{eq:1}--\eqref{C1} 
may be derived in terms of the height function $h:\ov\0\to\R$ that is defined by  
\begin{align}\label{HF}
h(q,p)=y+d\qquad\text{for $(q,p)\in\ov\0.$}
\end{align}
The system \eqref{eq:P} can then be recast 
in the fixed rectangular domain $\Omega$ in the following form
\begin{equation}\label{eq:Q}
\left\{
\begin{array}{rrllll}
(1+h_q^2)h_{pp}-2h_qh_ph_{pq} +h_p^2h_{qq}-[g \ov\h' (h-d)+\beta]h_p^3&=&0 & \text{in $\0$},\\[1ex]
h&=&0 & \text{on } p=p_0,\\
 1+h_q^2 +h_p^2\Big[2g \ov\h(0)(h-d)- \displaystyle 
 \frac{2\sigma h_{qq}}{(1+h_q^2)^{3/2}}-\frac{1}{\lambda}\int_{0}^\lambda\frac{1+h_q^2}{h_p^2}(q,0)\, dq\Big]&=&0 & \text{on } p=0,
\end{array}
\right.
\end{equation}
 the relation \eqref{C1}  taking the form
 \begin{equation}\label{C4}
\inf_{\0}h_p>0.
\end{equation} 
In virtue of \eqref{C4} the quasilinear equation $\eqref{eq:Q}_1$ is uniformly elliptic.
 This equation is  complemented by a  nonlinear and nonlocal boundary condition on $p=0$ and a 
homogeneous Dirichlet condition on $p=p_0$. 
This formulation gives an insight into the flow as the streamlines in the moving frame are parametrized by the mappings 
$[x\mapsto h(x,p)-d].$
In the setting of classical solutions it is not difficult to show that the three formulations 
\eqref{eq:1}--\eqref{C1}, \eqref{eq:P}--\eqref{C3}, and \eqref{eq:Q}--\eqref{C4} are equivalent, cf., e.g.,
\cite{Con11, Wa14a, Wal14b, Wal09}.
This feature remains true in the more general framework described below.\medskip

\noindent{\bf Equivalent formulations.}  In Theorem \ref{T:Ep} we  present our first main result which establishes for capillary-gravity stratified water waves, 
that is for $\sigma>0$, the equivalence of the three formulations  in a suitable setting of  strong solutions. 
The case $\sigma = 0$ is treated in Theorem \ref{T:E0}.
A strong solution of any of the three formulations possesses weak derivatives up to highest order (the order is required by the equations) that are $L_r$-integrable.
Moreover the lower order derivatives enjoy some
additional H\"older regularity to ensure that all equations are satisfied  in $L_r$-spaces (in particular pointwise a.e.).

\begin{thm}[Equivalence for $\sigma>0$]\label{T:Ep} 
Let  $\sigma,\,\lambda>0,$ and assume that \eqref{CR} holds true. Given  $r\in[1,\infty)$,   set $\alpha:=(r-1)/r\in[0,1).$ 
 Then, the following formulations are equivalent:
\begin{itemize}
\item[(i)] The velocity formulation \eqref{eq:1}--\eqref{C1} for $u-c,v,P\in W^1_{r}(\0_\eta)\cap{\rm C}^{\alpha}(\ov\0_\eta) $, 
$\eta\in W^2_r(\R)$, and $\rho\in  W^1_{r}(\0_\eta)\cap{\rm C}^{\alpha}(\ov\0_\eta)$.\\[-2ex]
\item[(ii)] The stream function formulation \eqref{eq:P}--\eqref{C3} for $\psi\in W^2_{r}(\0_\eta)\cap{\rm C}^{1+\alpha}(\ov\0_\eta) $, 
$\eta\in W^2_r(\R)$, $\ov\rho\in  W^1_r((p_0,0))$, and $\beta\in L_r((p_0,0))$.\\[-2ex]
\item[(iii)] The height function formulation \eqref{eq:Q}--\eqref{C4} for $h\in W^2_{r}(\0)\cap{\rm C}^{1+\alpha}(\ov\0) $ with 
$\tr_0 h\in W^2_r(\R)$, $\ov\rho\in W^1_r((p_0,0))$, and $\beta\in L_r((p_0,0))$.
\end{itemize}
\end{thm}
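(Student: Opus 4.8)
The plan is to prove the two equivalences (i)\,$\Leftrightarrow$\,(ii) and (ii)\,$\Leftrightarrow$\,(iii) separately; in each case the passage between formulations is effected by an explicit change of unknowns, so that the real work lies in checking that the stated regularity classes are respected. Two functional-analytic facts will be used repeatedly. First, $W^1_r(\0_\eta)\cap{\rm C}^{\alpha}(\ov\0_\eta)$ and $W^1_r(\0)\cap{\rm C}^{\alpha}(\ov\0)$ are Banach algebras (because $\nabla(fg)=f\nabla g+g\nabla f$ keeps a gradient in $L_r$ once both factors are bounded), and they are invariant under left composition with functions that are smooth and bounded away from their singularities on a neighbourhood of the range of the argument, in particular with $t\mapsto\sqrt t$ and $t\mapsto1/\sqrt t$ on $[\rho_0,\infty)$ — this is where \eqref{CR} enters. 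Second, and this is the delicate point, one must compose functions of one variable that are merely $L_r$- or $W^1_r$-regular with maps of two variables of class ${\rm C}^{1+\alpha}$; this can be carried out because the relevant two-variable map has a partial derivative bounded away from $0$ — precisely the content of \eqref{C2} and \eqref{C4} — so that a change of variables with Jacobian bounded above and below (equivalently the co-area formula) transfers $L_r$-integrability.

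\emph{From (i) to (ii) and back.} Starting from a strong solution of \eqref{eq:1}--\eqref{C1}, I would first note that the planar field $(-\sqrt\rho\,v,\sqrt\rho\,(u-c))$ is curl-free, since $\partial_x(\sqrt\rho\,(u-c))+\partial_y(\sqrt\rho\,v)=\sqrt\rho\,(u_x+v_y)+(2\sqrt\rho)^{-1}((u-c)\rho_x+v\rho_y)=0$ by the last two interior equations of \eqref{eq:1}. Since $v$ vanishes on $y=-d$, the line integral of this field along a horizontal period of the bed is zero, so its primitive $\psi$, normalised by $\psi=0$ on $y=\eta$, is well-defined and $\lambda$-periodic on $\0_\eta$ and equal to a negative constant $-p_0$ along $y=-d$. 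The algebra and composition properties give $\psi\in W^2_r(\0_\eta)\cap{\rm C}^{1+\alpha}(\ov\0_\eta)$ and recast \eqref{C1} as \eqref{C2}. As $\nabla\rho$ and $\nabla E$ are both orthogonal to the velocity, hence parallel to $\nabla\psi$, one may write $\rho=\ov\h(-\psi)$ and $E=\mathcal E(-\psi)$; differentiating these identities and invoking the momentum equations turns them into $\Delta\psi=gy\,\ov\h'(-\psi)+\beta(-\psi)$ with $\beta:=-\mathcal E'$, while evaluating $E$ on $y=\eta$ with the help of the first surface condition of \eqref{eq:1} yields $\eqref{eq:P}_4$ together with \eqref{C3}. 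That $\ov\h\in W^1_r((p_0,0))$ and $\beta\in L_r((p_0,0))$ I would obtain by restricting to the streamline through $q=0$ and applying the slicing argument; this step uses $\psi_y<0$. The converse reverses the construction: given $\psi$, set $\rho:=\ov\h(-\psi)$, $u-c:=\psi_y/\sqrt{\ov\h(-\psi)}$, $v:=-\psi_x/\sqrt{\ov\h(-\psi)}$, and $P:=\mathcal E(-\psi)-\rho\frac{(u-c)^2+v^2}{2}-g\rho y$, where $\mathcal E$ is the primitive of $-\beta$ fixed by $\eqref{eq:P}_4$; membership of $u-c,v$ in $W^1_r\cap{\rm C}^{\alpha}$ follows from the algebra property and \eqref{CR}, that of $\rho$ and $P$ additionally from the slicing argument (this is where $\ov\h'(-\psi)$ and $\beta(-\psi)$ get handled), and the equations of \eqref{eq:1} are then recovered by undoing the earlier computation.

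\emph{From (ii) to (iii) and back.} Here I would exploit that, by \eqref{C2}, the hodograph map $\HH$ of \eqref{HHH} is a bi-Lipschitz homeomorphism $\ov\0_\eta\to\ov\0$ with components in $W^2_r\cap{\rm C}^{1+\alpha}$, whose inverse is described by the height function $h$ of \eqref{HF}; the inverse function theorem in this regularity class — legitimate because the Jacobian $-\psi_y$ stays bounded away from $0$ — gives $h\in W^2_r(\0)\cap{\rm C}^{1+\alpha}(\ov\0)$, and conversely. Transporting the data: $p=0\Leftrightarrow y=\eta(q)$ gives $\tr_0h=\eta+d$, hence $\tr_0h\in W^2_r(\R)\Leftrightarrow\eta\in W^2_r(\R)$; $p=p_0\Leftrightarrow y=-d$ gives $\eqref{eq:Q}_2$; the identity $h_p=-1/\psi_y$ turns \eqref{C4} into \eqref{C2}; and rewriting $\eqref{eq:P}_1$ in the $(q,p)$-variables produces $\eqref{eq:Q}_1$, while $\eqref{eq:P}_4$ combined with \eqref{C3} becomes $\eqref{eq:Q}_3$. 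The functions $\ov\h$ and $\beta$ depend on $p$ alone, so their regularity is untouched.

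\emph{Main obstacle.} The one genuinely non-routine issue is the regularity bookkeeping in the regime where $r$ is close to $1$: since $W^2_r(\0)$ and $W^1_r(\0_\eta)$ do not embed into spaces of continuous functions in two dimensions, the ${\rm C}^{\alpha}$- and ${\rm C}^{1+\alpha}$-components built into the notion of strong solution must be carried through every nonlinearity, and the stratification terms — $\ov\h'(-\psi)$, $\beta(-\psi)$ and their counterparts after the hodograph change of variables — are compositions of an $L_r$-function of one variable with a map of two variables. Proving that these compositions, and the hodograph transform itself, preserve the class $W^2_r\cap{\rm C}^{1+\alpha}$ in both directions is precisely where the uniform ellipticity / non-degeneracy conditions \eqref{C2} and \eqref{C4} are indispensable, through a change of variables with two-sided Jacobian bounds; everything else reduces to the computations already available for classical solutions in \cite{Con11, Wa14a, Wal14b, Wal09}.
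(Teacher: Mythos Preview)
Your proposal is correct and follows essentially the same route as the paper, relying on the algebra property of $W^1_r\cap{\rm C}^\alpha$, the non-degeneracy \eqref{C2}/\eqref{C4} for the hodograph transform, and a change-of-variables/Jacobian argument for the compositions $\ov\h'(-\psi)$ and $\beta(-\psi)$. The only difference is organisational: the paper proves the cycle (i)$\Rightarrow$(ii)$\Rightarrow$(iii)$\Rightarrow$(i) --- recovering $\psi$ from $h$ in the last step by an explicit implicit-function-theorem construction rather than invoking an abstract inverse function theorem --- whereas you pair off (i)$\Leftrightarrow$(ii) and (ii)$\Leftrightarrow$(iii).
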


The proof of Theorem \ref{T:Ep} and the corresponding result for $\sigma=0$ are presented at the end of this section.
It is worthwhile to add the following remarks.
\begin{rem}\label{R:1}
\begin{itemize}
\item[(a)] Given $r\geq 1$, the H\"older coefficient $\alpha:=(r-1)/r\in[0,1) $ corresponds to the one-dimensional Sobolev embedding 
$W^1_r(\R)\hookrightarrow {\rm C}^\alpha(\R).$
It is worthwhile to note that $W^1_r(\R^2)$ is not embedded in a space of continuous functions if $r\in[1,2]$. 
Therefore, the H\"older regularity required above is not implied by the Sobolev regularity.\\[-2ex]
\item[(b)] All function  spaces in    Theorem \ref{T:Ep} consist only of functions that are
 $\lambda$-periodic with respect to $x$ and $q$, respectively. \\[-2ex]
\item[(c)] The symbol $\tr_0$ stands for the trace operator with respect to the boundary component $p=0$ of $\0=\R\times(p_0,0),$ that is $\tr_0 h(q)=h(q,0),$ $q\in\R$, for
$h\in {\rm C}(\ov\0).$ \\[-2ex]
\item[(d)] Let $\Omega\subset \R^n$ with $n\ge 1$ be open. In the proof of Theorem \ref{T:Ep} (and also later on) we make use of the following properties
\begin{align}
\bullet \quad &\text{$\partial (u v) = u \partial v + v \partial u$ \  in $\mathcal{D}'(\Omega)$\; for  $u, v \in W^1_{1,loc}(\Omega)$ with 
$uv, u \partial v + v \partial u \in L_{1,loc}(\Omega);$}\label{PR1}\\[1ex]
\bullet \quad&\text{$W^1_r(\Omega)\cap {\rm BC}^\alpha(\Omega)$ is an algebra;} \label{PR2}\\[1ex]
\bullet \quad&\begin{minipage}{12cm}
If $f\in W^1_{1,loc}(\0)\cap{\rm BC}(\0)$ has weak derivatives $f_i\in{\rm BC}(\0),$ $1\leq i\leq n$, then $f\in {\rm BC}^1(\0)$.
\end{minipage} \label{PR3} 
\end{align}  
The properties  \eqref{PR1} and \eqref{PR3} are classical results, while  \eqref{PR2} is a direct consequence of~\eqref{PR1}. 
\end{itemize}
\end{rem}

 \pagebreak

\noindent{\bf Local bifurcation.}   The main issue of this paper is the   local bifurcation result stated below.
Under the natural assumptions \eqref{CR} and \eqref{RES1} on the fluid density and the following restrictions on the
physical quantities\footnote{The relations \eqref{RES*} are satisfied for example if $d$ is small compared to $|p_0|$.
 Furthermore, if $\beta=0=\ov\h'$, then $\mu_*=0$ and the first condition in \eqref{RES*} is trivially satisfied.}
 \begin{align}\label{RES*}
d+\frac{p_0}{\Big(\mu_*-2\,\underset{[p_0,0]}\min B\Big)^{1/2}}<0
\qquad\text{and}\qquad\frac{gd^{3} \ov\rho(p_0)|p_0|}{\Big[p_0^2-\Big(\mu_*-2\underset{[p_0,0]}\min B\Big)d^2\Big]^{3/2}}\leq \frac{x_*}{2},
\end{align}
 where  $x_*\approx1.9368$ is the  positive solution to $e^x-x=5$,
\begin{align}\label{Beta}
\mu_*:=2\Big(gd\|\ov\rho'\|_{L_1((p_0,0))}+\max_{[p_0,0]} B\Big),\qquad\text{and}\qquad B(p):=\int_{p_0}^p\beta(s)\, ds,\quad p\in[p_0,0],
\end{align}
we prove that the water wave problem \eqref{eq:1}--\eqref{C1} possesses, for each $\lambda>0$, at least one laminar flow solution with flat streamlines. 
Besides, a critical wavelength $\lambda_*>0$ is identified such that  \eqref{eq:1}--\eqref{C1} has also other solutions with non-flat wave surface and with wavelength
 close to $\lambda_*$. More precisely, the following result holds true.

\begin{thm}\label{MT1} Let $\sigma,\, d,\, -p_0\in (0,\infty),$ $r\in(1,\infty)$, and $\alpha:=(r-1)/r\in(0,1) $  be given.
Assume further that  $\ov\rho\in W^1_r((p_0,0))$ and $\beta\in L_r((p_0,0))$ satisfy \eqref{CR}, \eqref{RES1}, and \eqref{RES*}.
 Then there exists a local bifurcation curve  
 \[
\mathcal{C}=\{(\lambda(s), u(s)-c,v(s),P(s),\rho(s),\eta(s))\,:\, s\in(-\e,\e)\},
 \]
  where $\e>0$ is small, having  the following properties:
    \begin{itemize}
    \item[(i)] $\lambda$ is smooth, $\lambda(s)>0$ for all $s>0,$ and 
    \[
\lambda(s)=\lambda_*    +O(s) \quad \text{for $s\to0$},
    \]
    where $\lambda_*>0$ is defined in Proposition \ref{Prop:1}.\\[-2ex]
    \item[(ii)]  $(u(0)-c,v(0),P(0),\rho(0),\eta(0))$ is a strong solution  to \eqref{eq:1}--\eqref{C1} for each $\lambda>0$,  has  flat   streamlines,  streamline density $\ov\rho,$
     and Bernoulli function $\beta$.\\[-2ex]
    \item[(iii)] Given $ s\in(-\e,\e)\setminus\{0\}$, $(u(s)-c,v(s),P(s),\rho(s),\eta(s))$ is a strong  solution  to \eqref{eq:1}--\eqref{C1} with minimal period $\lambda(s),$ 
    streamline density $\ov\rho$, and Bernoulli function $\beta$. 
    Moreover, the wave profile   has precisely one crest and one
trough per period, is symmetric with respect to crest and trough lines, and  is strictly monotone between
crest and trough.\\[-2ex]
\item[(iv)]  The wave profile and all other streamlines are real-analytic graphs.
    \end{itemize}
 \end{thm}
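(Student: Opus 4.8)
\textbf{Proof proposal for Theorem \ref{MT1}.}
The plan is to work entirely within the height-function formulation \eqref{eq:Q}--\eqref{C4}, which is legitimate by the equivalence Theorem \ref{T:Ep}, and to apply the Crandall--Rabinowitz theorem \cite{CR71} to a suitable weak reformulation. First I would introduce a Banach-space setting: write $h = H(\,\cdot\,;\lambda) + w$, where $H(\,\cdot\,;\lambda)$ is the laminar (i.e. $q$-independent) height function whose existence for every $\lambda>0$ is established separately by a shooting-method argument (this gives part~(ii) of the theorem, and the restrictions \eqref{RES*} on $d,p_0,\ov\rho,\beta$ are precisely what make the shooting problem solvable, cf.\ the discussion around \eqref{RES2} and Example~\ref{Ex:1}). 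The perturbation $w$ should live in a space of even, $\lambda$-periodic functions in $q$ with the regularity dictated by Theorem \ref{T:Ep}(iii), i.e.\ $w\in W^2_r(\Omega)\cap{\rm C}^{1+\alpha}(\ov\Omega)$ with $\tr_0 w\in W^2_r(\R)$ and homogeneous Dirichlet trace on $p=p_0$. Since the bulk equation $\eqref{eq:Q}_1$ cannot be written in divergence form, I would \emph{not} seek a weak formulation of the bulk equation; instead I would keep $\eqref{eq:Q}_1$ as a pointwise (strong, $L_r$) equation and only weaken the second-order surface condition $\eqref{eq:Q}_3$. Following the trick of \cite{MM14, CM14b}, the curvature term $-2\sigma h_{qq}/(1+h_q^2)^{3/2}$ on $p=0$ should be rewritten, after multiplying through and rearranging, so that the principal part becomes a Dirichlet-type condition for an auxiliary quantity perturbed by a nonlocal operator of order $-1$; this recasts \eqref{eq:Q}--\eqref{C4} as an abstract equation $\mathcal{F}(\lambda,w)=0$ with $\mathcal{F}\colon (0,\infty)\times\mathcal{U}\to\mathcal{Y}$ smooth (real-analytic, even, in the relevant variables), $\mathcal{U}$ an open neighbourhood of $0$, and $\mathcal{F}(\lambda,0)=0$ for all $\lambda$.

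Next I would compute the Fréchet derivative $\partial_w\mathcal{F}(\lambda,0)$. By construction it is a linear elliptic boundary-value problem with constant-in-$q$ coefficients, so separation of variables in the Fourier mode $\cos(2\pi k q/\lambda)$, $k\geq 1$, reduces the kernel analysis to a family of linear ODE two-point boundary-value problems in $p\in[p_0,0]$. The key computation — and the place where the choice of $\lambda$ as bifurcation parameter pays off — is the identity alluded to in the text near \eqref{LEM:DW}, which expresses the relevant transversality/monotonicity quantity cleanly and yields the simple dispersion relation of Lemma~\ref{L:39}. Concretely, one shows there is a unique $\lambda_*>0$ (defined in Proposition~\ref{Prop:1}, using the restriction \eqref{RES3}) for which the $k=1$ mode produces a one-dimensional kernel, while all higher modes $k\geq 2$ and the $k=0$ mode give trivial kernel; this forces the minimal period of the bifurcating waves to be exactly $\lambda_*$ (modulo the $O(s)$ correction) and gives a one-dimensional $\ker\partial_w\mathcal{F}(\lambda_*,0)=\spa\{w_*\}$. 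I would then verify the Fredholm-index-zero property of $\partial_w\mathcal{F}(\lambda_*,0)$ (standard elliptic theory, using that the nonlocal surface term is lower order and compact) and the Crandall--Rabinowitz transversality condition $\partial_\lambda\partial_w\mathcal{F}(\lambda_*,0)[w_*]\notin\im\partial_w\mathcal{F}(\lambda_*,0)$, which again is where the explicit dispersion identity is used — this is the step I expect to be the main technical obstacle, since one must rule out degeneracy of the derivative of the dispersion function at $\lambda_*$, and this is exactly what the quantitative hypotheses \eqref{RES*} and \eqref{RES3} are designed to guarantee.

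Applying \cite[Theorem~1.7]{CR71} then produces the smooth local curve $s\mapsto(\lambda(s),w(s))$ with $w(0)=0$, $\lambda(0)=\lambda_*$, $\dot w(0)=w_*$, giving weak solutions of Dubreil-Jacotin's formulation with the stated $O(s)$ expansion; choosing the parametrization so that $\lambda(s)>0$ for $s>0$ yields part~(i). Because $w_*$ is (up to scalar) $\cos(2\pi q/\lambda_*)$ in $q$, after possibly shrinking $\e$ the wave profile $q\mapsto \tr_0 h(s)(q)-d$ inherits evenness and strict monotonicity of $q\mapsto\cos(2\pi q/\lambda_*)$ on $(0,\lambda_*/2)$, so $\lambda(s)$ is the minimal period and the profile has exactly one crest and one trough, symmetric about the crest/trough lines — this is part~(iii). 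To upgrade the weak solutions to strong solutions in the sense of Theorem~\ref{MT1}(iii) I would invoke the regularity bootstrap Proposition~\ref{P:51}, which rests on Theorem~\ref{T:RR} (in the spirit of \cite{EC11, EM14}): elliptic $L_r$-estimates for $\eqref{eq:Q}_1$ together with the $W^2_r$-regularity of the surface equation give $h\in W^2_r(\Omega)\cap{\rm C}^{1+\alpha}(\ov\Omega)$, whence Theorem~\ref{T:Ep} transfers everything back to a strong solution of the velocity formulation \eqref{eq:1}--\eqref{C1} with the prescribed $\ov\rho$ and $\beta$. Finally, for part~(iv), once $h_p>0$ and $h\in{\rm C}^{1+\alpha}$, a standard Morrey/Schauder elliptic regularity argument applied to the (now nondegenerate quasilinear, real-analytic in its arguments) equation $\eqref{eq:Q}_1$ — combined with a reflection across $p=p_0$ for the bed streamline and across crest/trough lines, and the analytic-hypoellipticity of elliptic equations with analytic coefficients — shows each streamline $q\mapsto h(q,p)-d$ is a real-analytic graph. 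I would present the shooting-method construction of the laminar branch and the dispersion-relation computation in their own sections (Sections~\ref{Sec:2} and~\ref{Sec:3}), and collect the abstract bifurcation, the symmetry/monotonicity discussion, and the regularity upgrade into the proof of Theorem~\ref{MT1} proper.
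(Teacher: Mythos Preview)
Your overall strategy is right, but the function-space setup you propose has a genuine gap that would make the Crandall--Rabinowitz step fail precisely in the interesting range $r\in(1,2]$. You want to keep the bulk equation $\eqref{eq:Q}_1$ as a strong $L_r$-equation with domain $W^2_r(\0)\cap{\rm C}^{1+\alpha}(\ov\0)$. But then the linearized bulk operator, viewed as a map $W^2_r\cap{\rm C}^{1+\alpha}\to L_r$, is not Fredholm of index zero: for a generic right-hand side $f\in L_r(\0)$ the unique $W^2_r$-solution need not lie in ${\rm C}^{1+\alpha}(\ov\0)$ (there is no embedding $W^2_r(\0)\hookrightarrow{\rm C}^{1+\alpha}(\ov\0)$ in two dimensions when $r\le2$), so the cokernel is infinite-dimensional. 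Dropping the ${\rm C}^{1+\alpha}$-requirement from the domain does not help either, since then the nonlinearities such as $h_q^2h_{pp}$ are not even in $L_r$. The paper circumvents exactly this obstruction by working in the \emph{weak} setting of Definition~\ref{D:31}: the bulk equation \emph{is} recast in the divergence-like form
\[
\Big(\frac{h_q}{h_p}\Big)_q-\Big(\frac{\lambda^2+h_q^2}{2h_p^2}+\lambda^2B+\lambda^2g\ov\rho(h-d)\Big)_p+\lambda^2g\ov\rho h_p=0
\]
(the remark in the introduction that the equation ``cannot be recast in divergence form'' refers only to the residual undifferentiated term $\lambda^2g\ov\rho h_p$), the domain $\X$ is taken to be merely a subspace of ${\rm C}^{1+\alpha}(\ov\0)$, and the target $\Y_1$ is the space of distributions of the form $\varphi+\partial_q\psi_1+\partial_p\psi_2$ with $\varphi\in L_\infty$ and $\psi_i\in{\rm C}^\alpha$. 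In that setting the principal part of the linearization is an isomorphism by \cite[Theorem~8.34]{GT01}, the remaining terms are compact, and Lemma~\ref{L:31} delivers the Fredholm-index-zero property. The upgrade to $W^2_r$ regularity happens only \emph{after} the bifurcation argument, via Proposition~\ref{P:51}.

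Two smaller points. First, the laminar height $H$ does not depend on $\lambda$: after the rescaling \eqref{Scal} the wavelength drops out of \eqref{LFS} entirely, so writing $H(\,\cdot\,;\lambda)$ is misleading and would obscure the crucial trivial-branch identity $\kF(\lambda,0)=0$ for all $\lambda>0$. Second, for part~(iv) your appeal to ``analytic-hypoellipticity of elliptic equations with analytic coefficients'' does not apply as stated, because the coefficients $\ov\rho'$ and $\beta$ are only $L_r$ in $p$; the equation is real-analytic in $q$ but not in $p$. The paper obtains real-analyticity of the streamlines (which is a statement about the $q$-dependence only) from the explicit factorial bound \eqref{L:AST} of Theorem~\ref{T:RR} on $\|\partial_q^m h\|_{{\rm C}^{1+\alpha}(\ov\0)}$, and this same estimate, combined with Lemma~\ref{L:LLC}, is what drives the monotonicity argument for the profile in part~(iii).
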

 
 \begin{rem}\label{R:-2} 
 \begin{itemize}
 \item[(a)] We point out that we do not impose any   restrictions on the value of $\sigma>0,$ cf. \eqref{RES*}. 
 Nevertheless, the critical wavelength $\lambda_*$ depends in 
 an intricate way on $\sigma$. 
 \item[(b)] The regularity of the parametrization of $\mathcal{C}$ and the asymptotic behavior of $\eta(s)$ as $s\to0$ 
 are specified in  the proof of   Theorem \ref{MT1} at the end of Section  \ref{Sec:3}.
 \item[(c)] The strong solution $(u(0)-c,v(0),P(0),\rho(0),\eta(0))$ to~\eqref{eq:1}--\eqref{C1} found in (ii) is called laminar flow solution.
  Its existence is established in Proposition \ref{P:LS}. 
  \item[(d)] The limiting case $r=1$  remains open in the context of Theorem \ref{MT1} and  Theorem \ref{T:RR}.   
 \end{itemize}
 \end{rem}

  We conclude this section by proving the equivalence of the three formulations in the setting of strong solutions introduced above.

\begin{proof}[Proof of Theorem \ref{T:Ep}] We start with the implication (i)$\implies$(ii).
Let $(u-c,v,P,\rho,\eta)$ be a solution to \eqref{eq:1}--\eqref{C1}.
In virtue of  \eqref{CR} and the weak chain rule \cite[Lemma 7.5]{GT01} it follows that ${\sqrt\rho\in W^1_r(\0_\eta)\cap{\rm C}^{\alpha}(\ov\0_\eta)}$.
 Relation \eqref{PR2} then yields 
\[
\text{$U:=\sqrt\rho(u-c),\, V:=\sqrt\h v\in W^1_{r}(\0_\eta)\cap {\rm C}^{\alpha}(\ov\0_\eta)$.}
\]
We note that the relations $\eqref{eq:1}_3-\eqref{eq:1}_4$ imply 
\begin{align}\label{DIV'}
 \text{$U_x+V_y=0$ \quad in\quad $L_r(\0_\eta)$.} 
\end{align}
For $(x,y)\in\ov\0_\eta$ we now define
\begin{equation*}
	\psi(x,y):=-p_0+\int_{-d}^y U(x,s)\, ds, 
\end{equation*}
where $p_0<0$ is a  constant to be fixed below. 
It is obvious that $\psi$ is continuously differentiable with respect to $y$ with $\psi_y=U.$
Moreover, making use of Fubini's theorem, the generalized {Gau\ss} theorem in \cite[Appendix A 8.8]{Alt16}, and the 
relations \eqref{DIV'} and $\eqref{eq:1}_7$, we find for $\xi\in {\rm C}^\infty_0(\0_\eta)$ that
\begin{align*}
\int_{\0_\eta} \psi(x,y) \xi_x(x,y)\, d(x,y)&=\int_{\0_\eta}U(x,s) \Big(\int_s^{\eta(x)} \xi_x(x,y)\, dy\Big)\, d(x,s)
=\int_{\0_\eta}U(x,s)  \phi_x(x,s)\, d(x,s)\\[1ex]
&=-\int_{\0_\eta}U_x(x,s) \phi(x,s)\, d(x,s)=\int_{\0_\eta}V_s(x,s)  \phi(x,s)\, d(x,s)\\[1ex]
&=-\int_{\0_\eta}V(x,s) \phi_s(x,s)\, d(x,s) = \int_{\0_\eta}V(x,s)\xi(x,s)\, d(x,s),
\end{align*}
where $\phi\in {\rm C}^1(\ov\0_\eta)$ is defined by the formula
\[
\phi(x,s):=\int_s^{\eta(x)} \xi(x,y)\, dy,\qquad (x,y)\in \ov\0_\eta.
\]
Thus, $\nabla \psi=(-V,U)$ and since these weak derivatives belong to $W^1_{r}(\0_\eta)$,  we conclude that  ${\psi\in W^2_{r}(\0_\eta).}$
Moreover,  \eqref{PR3} implies that $\psi\in {\rm C}^{1+\alpha}(\ov\0_\eta)$.

The relation  \eqref{C2} is clearly satisfied in view of \eqref{C1}.
Since $\psi$ is constant on the fluid bed and by $\eqref{eq:1}_6$ also on the free surface $y=\eta(x)$, we infer from \eqref{C2}
that we may chose the negative constant $p_0$ such that $\psi=0$ on the free surface. 
 
 It is easy to see now that the mapping $\HH$ defined in \eqref{HHH} satisfies   $\HH\in {\rm Diff}^{1+\alpha}(\0_\eta,\0)$, i.e. $\HH:\0_\eta\to\0$ is a $C^{1+\alpha}$-diffeomorphism, with
\[
\begin{pmatrix}
 \frac{\p q}{\p x}&\frac{\p q}{\p y}\\[1ex]
  \frac{\p p}{\p x}&\frac{\p p}{\p y} 
\end{pmatrix}
=\begin{pmatrix}
 1&0\\[1ex]
   V&-U 
\end{pmatrix}
\qquad\text{and}\qquad \begin{pmatrix}
 \frac{\p x}{\p q}&\frac{\p x}{\p p}\\[1ex]
  \frac{\p y}{\p q}&\frac{\p y}{\p p} 
\end{pmatrix}\circ \HH
=
\begin{pmatrix}
 1&0\\[1ex]
  \frac{V}{U}&-\frac{1}{U}
\end{pmatrix}.
\]
In view of $\rho\in W^1_r(\0_\eta)$ it follows that $\rho\circ \HH^{-1}\in W^1_r(\0)$ with
\[
\p_q(\rho\circ \HH^{-1})\circ\HH= \rho_x+\frac{V}{U}\rho_y =0,
\]
cf. $\eqref{eq:1}_3.$
Consequently, there exists $\ov \rho\in L_r((p_0,0))$ with $\rho\circ \HH^{-1}=\ov\rho.$
Moreover, it  actually holds that $\ov\rho\in    W^1_r((p_0,0))$ with weak derivative $\ov\rho'=-(\rho_y/U)\circ \HH^{-1}.$
  
We now consider the expression
\[
E:=P+\frac{U^2+V^2}{2}+g\rho y
\]
which defines a function in $W^1_r(\0_\eta)\cap {\rm C}^\alpha(\ov\0_\eta)$.
Hence, $E\circ\HH^{-1}\in W^1_r(\0)$ and
\begin{align*}
 \p_q(E\circ \HH^{-1})\circ \HH&= E_x+\frac{V}{U}E_y \\[1ex]
 &=\big( \rho (u-c)u_x+\rho vu_y+P_x\big)+\frac{V}{U}\big( \rho (u-c)v_x+\rho vv_y+P_y+g\rho\big)\\[1ex]
 &\hspace{0.424cm}+((u-c)\rho_x+v\rho_y) \frac{E-P}{\sqrt{\rho}U}\quad \text{in $L_r(\0_\eta)$}.
\end{align*}
Appealing to $\eqref{eq:1}_1-\eqref{eq:1}_3$, it follows that  $\p_q(E\circ \HH^{-1})=0.$
This relation has at least two implications. 
Firstly, $E$ is constant at the wave surface, which implies the existence of a constant $Q$ such that
\begin{align}\label{CE}
 E=P +\frac{|\nabla\psi|^2}{2}+g\rho y=\frac{Q}{2} \quad \text{on \quad $ y=\eta(x) $}.
\end{align}
Secondly, in view of $\p_q(\p_p(E\circ \HH^{-1}))=0$, we may conclude   there exists a function ${\beta\in L_r((p_0,0))}$ such that 
$-\p_p(E\circ \HH^{-1})=\beta.$
The relation \eqref{CE} together with  $\eqref{eq:1}_5$ and $\eqref{eq:1}_8$, 
shows that $\eqref{eq:P}_4$ holds true with $Q$ as defined in \eqref{C3}.
Finally, since $\Delta\psi=U_y-V_x$,
$\eqref{eq:1}_2-\eqref{eq:1}_3$ lead us to
 \begin{align*}
 \beta\circ \HH&=\frac{1}{U}E_y=\Delta \psi-gy\ov\rho'\circ \HH,
\end{align*}
which is the semilinear elliptic equation in \eqref{eq:P}. This completes this first step of the proof.\medskip

We now verify that (ii)$\implies$(iii).
Let thus $(\psi,\eta)$ be a solution to \eqref{eq:P}--\eqref{C3} and let $h$ be the height function introduced in $\eqref{HF}$.
Then, it follows that $h\in {\rm C}^{1+\alpha}(\ov\0)$ with
\[
h_q=-\frac{\psi_x}{\psi_y}\circ\HH^{-1}\qquad\text{and}\qquad h_p:=-\frac{1}{\psi_y}\circ\HH^{-1}.
\]
With regard to \cite[Lemma 7.5]{GT01},  property \eqref{C2} shows that $1/\psi_y\in W^1_r(\0_\eta)\cap {\rm C}^{ \alpha}(\ov\0_\eta),$
and the algebra property \eqref{PR2} leads us to the conclusion that $h_q,\, h_p\in W^1_r(\0)$,
hence $h\in W^2_{r}(\0)$. 
The relation $\eqref{eq:P}_3$ implies that  $h$ satisfies $\eqref{eq:Q}_2$, while \eqref{C4}
follows immediately from \eqref{C2}.
Moreover, since 
\begin{align}\label{Psis}
 \psi_{xx}   = \frac{h_p^2 h_{qq}-2h_qh_ph_{qp}+h_q^2h_{pp}}{h_p^3}\circ\HH\qquad\text{and}\qquad \psi_{yy} = \frac{ h_{pp}}{h_p^3}\circ\HH,
\end{align}
it follows from $\eqref{eq:P}_1$ that $h$ is a solution to $\eqref{eq:Q}_1$.
Let us also note that the equation $\eqref{eq:P}_2$ yields $\eta(q)=h(q,0)-d$, $q\in\R$, and therefore $\tr_0 h\in W^2_r(\R)$.
The boundary condition $\eqref{eq:Q}_3$ is a direct consequence of $\eqref{eq:P}_4.$
This completes the second step of the proof.\medskip
  
It remains to establish the implication (iii)$\implies$(i).
To begin we first define $\eta:=h(\,\cdot\,,0)-d$. Then $\eta\in W^2_r(\R)$ and
integrating $\eqref{eq:Q}_3$ over one period of the wave we find that $\eqref{eq:1}_8$ is satisfied.
Besides, \eqref{C4} yields that $\eta(x)+d= h(x,0)>0$ for all $x\in\R$. 
We now associate to $\eta$ the corresponding velocity, pressure, and density distribution.
To this end we let $\Phi:\R^2\times[2p_0,0]\to\R$ be the function defined by
\[
\Phi(x,y,p):=\left\{
\begin{array}{lll}
y+d-h(x,p)&,& p\in[p_0,0],\\[1ex]
y+d+h(x,2p_0-p)&,& p\in[2p_0,p_0].
\end{array}
\right.  
  \]
Then $\Phi\in {\rm C}^{1+\alpha}(\R^2\times[0,2p_0])$, $\Phi(x,-d,p_0)=0$ for all $x\in\R,$ and $\Phi_p\leq -\inf_\0h_p<0$.
For fixed, but arbitrary $x\in\R$, the implicit function theorem yields the existence of a function $\psi(x,\,\cdot\,)$ which 
is continuously differentiable in $ [-d,-d+\e)$, 
for some $\e>0$, and satisfies $\psi(x,-d)=-p_0$ as well~as 
\[
h(x,-\psi(x,y))=y+d  \quad\text{for all \quad $y\in[-d,-d+\e)$.}
\] 
Because $\psi(x,\,\cdot\,)$ is strictly decreasing we can extend this function continuously in $-d+\e$ if
\[
\underset{y\to-d+\e}\lim\psi(x,y)<0.
\]
The implicit function theorem then enables us to even extend $\psi(x,\,\cdot\,)$ beyond $-d+\e$.
Hence,  $\psi(x,\,\cdot\,)$ has a maximal extension $\psi(x,\,\cdot\,)\in {\rm C}^1([-d,A(x)),\R)$ with $\psi(x,-d)=-p_0$ and $\psi(x,A(x))=0$.
In view of $A(x)+d=h(x,0)=\eta(x)+d$, we conclude that $\eta(x)=A(x)$. 
Therefore  $\psi:\ov\0_\eta\to\R$ and~\eqref{C2} is satisfied.
Moreover, the implicit function theorem yields that $\psi\in {\rm C}^{1+\alpha}(\0_\eta)$ with 
\[
\psi_x(x,y)= \frac{h_q(x,-\psi(x,y))}{h_p(x,-\psi(x,y))} \qquad\text{and}\qquad  \psi_y(x,y)=-\frac{1}{h_p(x,-\psi(x,y))}.
\] 
Since $h\in {\rm C}^{1+\alpha}(\ov\0),$ it now follows  that $\psi\in {\rm C}^{1+\alpha}(\ov \0_\eta)$ and the mapping $\HH$ 
defined in \eqref{HHH} obviously satisfies  $\HH\in {\rm Diff}^{1+\alpha}(\0_\eta,\0)$.
Since $h_q,\, h_p\in W^1_r(\0)\cap {\rm C}^\alpha(\0)$, we find in virtue of \eqref{PR1}--\eqref{PR2} and \cite[Lemma 7.5]{GT01} that
$\psi\in W^2_r(\0)$. Moreover, the derivatives $\psi_{xx} $ and $\psi_{yy} $ satisfy \eqref{Psis}. 
We now define $\rho,\, u-c,\,v :\ov\0_\eta\to\R$ by setting
 \[
\rho =\ov\rho\circ\HH,\qquad  \sqrt{\rho}(u-c) =\psi_y,\qquad   \sqrt{\rho} v =-\psi_x,
 \]
  and we let $ P:\ov\0_\eta\to\R$ be given by the relation
  \[
  P(x,y)=-\rho\frac{(u-c)^2+v^2}{2}(x,y)-g\rho(x,y) y-\int_0^{-\psi(x,y)}\beta(s)\, ds+\frac{Q}{2},
  \]
with $Q$ defined according to \eqref{C3}.
Then $\rho\in W^1_r(\0)\cap {\rm C}^{\alpha}(\ov\0)$ and, recalling \eqref{CR}, we may argue as above to conclude that $u$, $ v$, and $P$ belong to $ W^1_r(\0)\cap {\rm C}^{\alpha}(\ov\0).$
It is now a matter of direct computation to see that all the equations of \eqref{eq:1}--\eqref{C1} are satisfied.
This completes the proof.
\end{proof}

It follows  from the proof of Theorem \ref{T:Ep}
that, when neglecting surface tension effects, the following equivalence result holds.

\begin{thm}[Equivalence for $\sigma=0$]\label{T:E0} 
Let  ,$\sigma=0$, $\lambda>0$, and assume that \eqref{CR} holds true.
 Given  $r\in[1,\infty)$,  set $\alpha:=(r-1)/r\in[0,1).$
 Then, the following formulations are equivalent:
\begin{itemize}
	\item[(i)] The velocity formulation \eqref{eq:1}--\eqref{C1} for $u-c,v,P\in W^1_{r}(\0_\eta)\cap{\rm C}^{\alpha}(\ov\0_\eta) $, 
		$\eta\in {\rm C}^{1+\alpha}(\R)$, and $\rho\in  W^1_{r}(\0_\eta)\cap{\rm C}^{\alpha}(\ov\0_\eta)$.
	\item[(ii)] The stream function formulation \eqref{eq:P}--\eqref{C3} for $\psi\in W^2_{r}(\0_\eta)\cap{\rm C}^{1+\alpha}(\ov\0_\eta) $, 
		$\eta\in {\rm C}^{1+\alpha}(\R)$, $\ov\rho\in  W^1_r((p_0,0))$, and $\beta\in L_r((p_0,0))$.
		\item[(iii)] The height function formulation \eqref{eq:Q}--\eqref{C4} for $h\in W^2_{r}(\0)\cap{\rm C}^{1+\alpha}(\ov\0) $, $\beta\in L_r((p_0,0))$, 
		and $\ov\rho\in W^1_r((p_0,0))$.
\end{itemize}
\end{thm}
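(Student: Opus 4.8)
The plan is to rerun the three implications (i)$\implies$(ii)$\implies$(iii)$\implies$(i) from the proof of Theorem~\ref{T:Ep} essentially verbatim, with $\sigma$ set equal to $0$ throughout, keeping track of precisely where the stronger hypothesis $\eta\in W^2_r(\R)$ (respectively the stronger conclusion $\tr_0 h\in W^2_r(\R)$) was invoked. The key observation is that in all three formulations \eqref{eq:1}--\eqref{C1}, \eqref{eq:P}--\eqref{C3}, and \eqref{eq:Q}--\eqref{C4} the second-order derivative of the wave profile enters \emph{only} through the capillary terms $\sigma\eta''$ in $\eqref{eq:1}_5$ and $\eqref{eq:P}_4$ and through $\sigma h_{qq}$ in $\eqref{eq:Q}_3$; once $\sigma=0$ these terms disappear, the dynamic boundary condition $\eqref{eq:P}_4$ reduces to $|\nabla\psi|^2+2g\ov\rho(0)y=Q$ on $y=\eta(x)$, and $\eqref{eq:Q}_3$ reduces to a relation in $h$, $h_q$ on $p=0$. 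Hence no Sobolev regularity of the free surface beyond $W^1_r$ is needed for any equation to make sense in $L_r$, and the H\"older regularity ${\rm C}^{1+\alpha}$ — which is what the continuity of $\nabla\psi$ (resp. $\nabla h$) up to the boundary already delivers, and which is implied by $h\in{\rm C}^{1+\alpha}(\ov\0)$ through the trace identity $\eta+d=\tr_0 h$ — is the natural substitute. One should also record that $W^2_r(\R)\hookrightarrow{\rm C}^{1+\alpha}(\R)$ for $\alpha=(r-1)/r$, so the surface regularity assumed here is strictly weaker than in the capillary case, consistently with the loss of the regularizing surface-tension term.

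Concretely, in the step (i)$\implies$(ii) the construction of $\psi$ from $(u-c,v,P,\rho)$ and the regularity $\psi\in W^2_r(\0_\eta)\cap{\rm C}^{1+\alpha}(\ov\0_\eta)$ go through unchanged, since they never used $\eta\in W^2_r$ (only the continuity of $\eta$, via the test function $\phi$, which is covered by $\eta\in{\rm C}^{1+\alpha}(\R)$); the derivation of $\eqref{eq:P}_4$ now uses $\eqref{eq:1}_5$ in the form $P=0$ on $y=\eta(x)$ together with the energy identity \eqref{CE} and $\eqref{eq:1}_8$, and produces the reduced dynamic boundary condition with $Q$ as in \eqref{C3}, which is meaningful because $\psi\in{\rm C}^{1+\alpha}(\ov\0_\eta)$. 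In (ii)$\implies$(iii) the height function $h$ is built from $\psi$ as before, giving $h\in W^2_r(\0)\cap{\rm C}^{1+\alpha}(\ov\0)$ together with $\eqref{eq:Q}_1$--$\eqref{eq:Q}_2$ via the formulas for $h_q,h_p$ and \eqref{Psis}; the boundary condition $\eqref{eq:Q}_3$ with $\sigma=0$ then follows by inserting $h_q=-(\psi_x/\psi_y)\circ\HH^{-1}$, $h_p=-(1/\psi_y)\circ\HH^{-1}$, hence $(1+h_q^2)/h_p^2=|\nabla\psi|^2\circ\HH^{-1}$, into the reduced $\eqref{eq:P}_4$ and multiplying by $h_p^2$, with $Q$ from \eqref{C3}; since $\eta:=\tr_0 h-d\in{\rm C}^{1+\alpha}(\R)$ is automatic from $h\in{\rm C}^{1+\alpha}(\ov\0)$, no separate $W^2_r$ claim on $\tr_0 h$ is required. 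In (iii)$\implies$(i) one sets $\eta:=h(\,\cdot\,,0)-d\in{\rm C}^{1+\alpha}(\R)$, constructs $\psi$ and then $\rho,u-c,v,P$ exactly as in the proof of Theorem~\ref{T:Ep}, and verifies the equations of \eqref{eq:1}--\eqref{C1}; the only point needing care is the pressure boundary condition $\eqref{eq:1}_5$, i.e. $P=0$ on $y=\eta(x)$, which now follows directly from the definition of $P$ evaluated at $p=-\psi=0$ combined with the reduced $\eqref{eq:Q}_3$.

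I expect no genuinely new obstacle here: the proof is a transcription of the one for Theorem~\ref{T:Ep}, and the only real work is the bookkeeping described above — isolating the single place where $\eta\in W^2_r(\R)$ was used in the capillary proof (namely to make the term $\sigma\eta''$, equivalently $\sigma h_{qq}$, meaningful) and confirming that, with $\sigma=0$, the one-dimensional embedding $W^1_r(\R)\hookrightarrow{\rm C}^\alpha(\R)$ and the continuity of $\nabla\psi$, $\nabla h$ up to the respective boundaries suffice for every identity in the derivation to hold in $L_r$, hence pointwise almost everywhere.
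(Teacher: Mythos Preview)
Your proposal is correct and matches the paper's own treatment: the paper simply states that Theorem~\ref{T:E0} follows from the proof of Theorem~\ref{T:Ep}, and your write-up carries out exactly the bookkeeping needed to see why the weaker hypothesis $\eta\in{\rm C}^{1+\alpha}(\R)$ suffices once the capillary terms $\sigma\eta''$ and $\sigma h_{qq}$ vanish.
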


\section{A weak setting for Dubreil-Jacotin's formulation}\label{Sec:2}
In this section we seek  solutions to problem \eqref{eq:Q}-\eqref{C4} under the general assumptions that  
\begin{align}\label{L:LLL}
r\in(1,\infty),\qquad \ov\rho\in W^1_r((p_0,0)), \qquad \text{and}\qquad \beta\in L_r((p_0,0)),
\end{align}
where $\ov\rho$ and $\beta$ are arbitrary but  fixed.
Moreover, we restrict to the setting of   stably stratified flows defined by  \eqref{CR} and \eqref{RES1}.
The reason for studying the height function
formulation is twofold. 
Firstly, the equations have a single unknown, the  height function  $h$,  and secondly, the Bernoulli function $\beta$ and the
 streamline density  $\ov\rho$ appear as coefficients in the equations.

 Since we aim  to formulate \eqref{eq:Q} as a bifurcation problem and to use the wavelength $\lambda$ as   bifurcation parameter we let 
\begin{equation}\label{Scal}
\wt h(q,p):= h(\lambda q,p),\qquad (q,p)\in\ov\0.
\end{equation}
Then $\wt h$  is $1$--periodic\footnote{Hereinafter all function spaces consist of functions which are $1$--periodic with respect 
to $q$   (provided that they depend on the variable $q$).} and \eqref{eq:Q} may be rewritten (after dropping  tildes) as
\begin{equation}\label{eq:Q'}
\left\{
\begin{array}{rrllll}
(\lambda^2+h_q^2)h_{pp}-2h_qh_ph_{pq} +h_p^2h_{qq}-\lambda^2[g \ov\h' (h-d)+\beta]h_p^3&=&0 & \text{in $\0$},\\[1ex]
h&=&0 & \text{on } p=p_0,\\
 \lambda^2+h_q^2 +h_p^2\Big[2\lambda^2g \ov\h(0)(h-d)-\displaystyle 
 \frac{2\sigma\lambda^3 h_{qq}}{(\lambda^2+h_q^2)^{3/2}}- \int_{0}^1\frac{\lambda^2+h_q^2}{h_p^2}(q,0)\, dq\Big]&=&0 & \text{on  $ p=0$},
\end{array}
\right.
\end{equation}
while \eqref{C4}  remains unchanged.
 Now not only $h$ is unknown in \eqref{eq:Q'} but also the wavelength $\lambda$.

In order to determine strong solutions to~\eqref{eq:Q'} and~\eqref{C4} as defined in Theorem \ref{T:Ep} (iii), 
we shall first find weak solutions to this problem and then improve their regularity.
We now introduce a proper notion of weak solutions.  

\begin{defn}\label{D:31} A function $h\in {\rm C}^1(\ov\0)$
is called {\em weak solution} to~\eqref{eq:Q'} and~\eqref{C4}  if $h$  satisfies \eqref{C4}, the equation\footnote{Recall that $B$ denotes the primitive of the 
Bernoulli function, cf. \eqref{Beta}.} 
\begin{align*}
 \Big(\frac{h_q}{h_p}\Big)_q - \Big(\frac{\lambda^2 + h_q^2}{2h_p^2} + \lambda^2 B + \lambda^2 g \ov\rho (h-d) \Big)_p +\lambda^2 g \ov\rho h_p=0\qquad\text{in $\mathcal{D}'(\0)$},
\end{align*}
and the boundary conditions
 \begin{align*}
 h=(1-\p_q^2)^{-1}\tr_0\left[h-\frac{(\lambda^2+h_q^2)^{3/2}}{2\sigma\lambda^3}\left(\frac{\lambda^2+h_q^2}{h_p^2}+2\lambda^2g \ov\h(h-d)-\int_{0}^1\frac{\lambda^2+h_q^2}{h_p^2}\, dq\right)\right]\quad\text{on $p=0$ }
\end{align*}
and 
\begin{align*}
 h=0\quad\text{on $p=p_0$.}
\end{align*}
\end{defn}
In Definition \ref{D:31}    we have made use of the fact that $$(1-\p_q^2):{\rm C}^2(\R)\to {\rm C}(\R)$$ is an isomorphism.
\medskip

\paragraph{\bf Laminar flow solutions} 
In the remainder  of this section we show that, given any $\lambda>0$, the equations \eqref{eq:Q'} and~\eqref{C4}  have  at least one weak solution $H$ 
that depends only on the variable $p$. 
This solution is then easily seen to be a strong solution to~\eqref{eq:Q'} and~\eqref{C4} (similar as defined in Theorem \ref{T:Ep} (iii)).
This is the  laminar flow solution mentioned in Theorem \ref{MT1} (ii).

 Since the density is positive, it follows that  $H=H(p)$ is a weak solution to~\eqref{eq:Q'} and~\eqref{C4} if and only if $H'>0$ on $[p_0,0] $ and if $H$ solves the system
\begin{equation}\label{LFS}
 \left\{
\begin{array}{lllll}
 \displaystyle \Big(\frac{1}{H'^2}\Big)'&=& -2[g \ov\h' (H-d)+\beta]&\text{in $\mathcal{D}'((p_0,0))$},\\[2ex]
 H(0)&=&d,\\[1ex]
 H(p_0)&=&0.
\end{array}
\right.
\end{equation}
We emphasize that the wavelength parameter does not appear in \eqref{LFS}.
Taking into account that $H\in {\rm C}^1([p_0,0])$, we get that additionally $H\in W^2_r((p_0,0))$. 
Moreover, setting $\mu:=(H'(p_0))^{-2}>0$, the function $H$ satisfies the fixed point equation
\begin{align}\label{FPE}
 H(p)=\int_{p_0}^p\Big(\mu-2\int_{p_0}^r [g\ov\rho'(s)(H(s)-d)+\beta(s)]\, ds\Big)^{-1/2}\, dr,\qquad p\in[p_0,0].
\end{align}
Our goal is to show, by means of a shooting argument, that there exists a $\mu>0$ such that \eqref{FPE} has a solution which satisfies additionally $H(0)=d$.
 This solution then   also solves \eqref{LFS}.
 Let  $\mu_*\geq0$  and $B$ be as defined in \eqref{Beta}.
As a first step we prove below that the fixed point equation \eqref{FPE} has a unique nonnegative  solution  $H=H(\cdot;\mu)$
for any $\mu>\mu_*$.

\begin{prop}\label{T:todo}
Given $\mu>\mu_*$, there is  a unique  solution  $H=H(\cdot;\mu)\in   W^2_r((p_0,0))$ to the fixed point equation \eqref{FPE}. It further holds that  $H'>0$ in $[p_0,0]$. 
 \end{prop}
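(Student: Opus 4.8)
The strategy is a contraction mapping argument on a suitable complete metric space, exploiting that for $\mu>\mu_*$ the quantity under the square root in \eqref{FPE} stays bounded away from zero uniformly. Fix $\mu>\mu_*$ and set $\delta:=\mu-\mu_*>0$. The plan is to work in the closed subset
\[
\mathcal{X}_M:=\{H\in {\rm C}([p_0,0])\,:\, 0\le H\le M \text{ on }[p_0,0]\}
\]
of ${\rm C}([p_0,0])$ (with the sup norm), where $M>0$ is a constant to be chosen, and to show that the right-hand side of \eqref{FPE} defines a map $\mathcal{T}\colon\mathcal{X}_M\to\mathcal{X}_M$ that is a contraction once the interval length $|p_0|$ is effectively controlled — or, if $|p_0|$ is not small, by first establishing the a priori bound and then iterating the contraction estimate to absorb the length (equivalently, using a weighted sup norm $\|H\|_\theta:=\sup_{p}e^{-\theta(p-p_0)}|H(p)|$ with $\theta$ large, which is a standard device to make the Picard map a contraction on a long interval). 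I expect the weighted-norm version to be the cleanest.

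\textbf{Step 1: the integrand is well defined and bounded.} For any $H\in\mathcal{X}_M$ and $r\in[p_0,0]$, write
\[
\Lambda_H(r):=\mu-2\int_{p_0}^r\big[g\ov\rho'(s)(H(s)-d)+\beta(s)\big]\,ds.
\]
Using $|H(s)-d|\le M+d$ — or, better, splitting the $\ov\rho'$ term via \eqref{RES1} and \eqref{CR} to keep the sharp constant $\mu_*$ — one obtains
\[
\Lambda_H(r)\ge \mu-2gd\|\ov\rho'\|_{L_1((p_0,0))}-2\max_{[p_0,0]}B\ge\mu-\mu_*=\delta>0
\]
once $H\le d$; since the true solution will satisfy $0<H<d$ this is consistent, and to make the a priori region invariant one checks that $\mathcal{T}H\le d$ follows from $\Lambda_H\ge\delta$ together with $|p_0|$-dependent bounds, choosing $M=d$. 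Hence $\Lambda_H$ is continuous, positive, and bounded below by $\delta$, so $r\mapsto\Lambda_H(r)^{-1/2}$ is continuous and bounded by $\delta^{-1/2}$, and $\mathcal{T}H$ is well defined, continuous, nondecreasing, nonnegative, with $(\mathcal{T}H)(p_0)=0$.

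\textbf{Step 2: invariance and contraction.} That $\mathcal{T}$ maps $\mathcal{X}_d$ into itself is the a priori bound $\mathcal{T}H\le d$; this is where the \emph{first} condition in \eqref{RES*}, namely $d+p_0\big(\mu_*-2\min_{[p_0,0]}B\big)^{-1/2}<0$, enters — it is precisely the statement $|p_0|\,\delta_*^{-1/2}>d$ for the critical value, ensuring the laminar profile has room to reach height $d$; here one uses it with $\mu>\mu_*$, so the inequality is not tight and leaves slack. For the contraction, given $H_1,H_2\in\mathcal{X}_d$ estimate
\[
|\Lambda_{H_1}(r)-\Lambda_{H_2}(r)|\le 2g\!\int_{p_0}^r\!|\ov\rho'(s)|\,|H_1(s)-H_2(s)|\,ds\le 2g\|\ov\rho'\|_{L_1}\|H_1-H_2\|_\infty,
\]
and then use $|a^{-1/2}-b^{-1/2}|\le\tfrac12\delta^{-3/2}|a-b|$ for $a,b\ge\delta$ to get a Lipschitz bound on $\mathcal{T}$ with constant $C|p_0|$ where $C=g\|\ov\rho'\|_{L_1}\delta^{-3/2}$ (times the crude $L_1$ length factor). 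If $C|p_0|<1$ we are done by Banach's fixed point theorem; otherwise switch to the weighted norm $\|\cdot\|_\theta$, where the same computation gives Lipschitz constant $C/\theta<1$ for $\theta$ large. Either way $\mathcal{T}$ has a unique fixed point $H\in{\rm C}([p_0,0])$ with $0\le H\le d$.

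\textbf{Step 3: regularity and positivity of $H'$.} Since $H$ is continuous and $\Lambda_H\in{\rm C}([p_0,0])$ with $\Lambda_H\ge\delta>0$, the fixed-point identity shows $H\in{\rm C}^1([p_0,0])$ with $H'=\Lambda_H^{-1/2}\ge\delta^{-1/2}$... wait, rather $H'=\Lambda_H^{-1/2}\ge(\sup\Lambda_H)^{-1/2}>0$; in any case $H'>0$ on $[p_0,0]$, which is the asserted strict monotonicity. Moreover $1/H'^2=\Lambda_H$, and the integrand $s\mapsto g\ov\rho'(s)(H(s)-d)+\beta(s)$ lies in $L_r((p_0,0))$ because $\ov\rho'\in L_r$, $H-d\in{\rm C}\subset L_\infty$, and $\beta\in L_r$; hence $\Lambda_H\in W^1_r((p_0,0))$, and since $\Lambda_H$ is bounded away from $0$ the map $t\mapsto t^{-1/2}$ is smooth near the range of $\Lambda_H$, so $H'=\Lambda_H^{-1/2}\in W^1_r((p_0,0))$ by the chain rule, i.e. $H\in W^2_r((p_0,0))$. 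This gives all assertions of the proposition, and uniqueness in ${\rm C}([p_0,0])$ implies uniqueness in $W^2_r((p_0,0))$ since any $W^2_r$ solution is in particular a continuous fixed point of $\mathcal{T}$.

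\textbf{Main obstacle.} The one genuinely delicate point is the invariance $\mathcal{T}(\mathcal{X}_d)\subseteq\mathcal{X}_d$: the crude bound $\mathcal{T}H\le |p_0|\delta^{-1/2}$ need not be $\le d$ for all $\mu>\mu_*$ — indeed for $\mu$ slightly above $\mu_*$ it is, by the first inequality in \eqref{RES*}, but for large $\mu$ the integrand shrinks and $\mathcal{T}H$ could fall below $d$ only, which is still fine, while for $\mu$ near $\mu_*$ one must track the sharp constant rather than the lossy $|p_0|\delta^{-1/2}$ estimate. The resolution is to not fix $M=d$ a priori but to prove invariance of $\mathcal{X}_M$ for $M$ slightly larger than $d$ (any $M$ with $|p_0|\,\delta^{-1/2}\le M$ works by Step 1, and such $M$ is finite), carry out the contraction there, and only afterwards observe that the fixed point automatically satisfies $H(0)\le d$ is \emph{not} what we need — rather $H(0)=d$ is imposed separately in \eqref{LFS} via the choice of $\mu$, which is the content of the \emph{next} proposition. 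So for the present statement it suffices to produce, for each $\mu>\mu_*$, the unique ${\rm C}([p_0,0])$-fixed point on a large enough ball, upgrade its regularity, and note $H'>0$; the matching condition $H(\cdot;\mu)(0)=d$ is handled subsequently by the shooting/intermediate-value argument.
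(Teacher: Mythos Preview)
Your overall strategy—Banach fixed point on a closed subset of ${\rm C}([p_0,0])$, then bootstrap to $W^2_r$—is correct and is what the paper does. The paper carries out the contraction on short subintervals of uniform length $(\mu-\mu_*)^{3/2}/[2g(\|\ov\rho'\|_{L_1}+1)]$ and glues, whereas your weighted-norm device would achieve the same in one shot; both are legitimate.

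There is, however, a genuine confusion in your Steps~1--2 that you never cleanly resolve. The estimate $\Lambda_H(r)\ge\mu-\mu_*$ holds for \emph{every} $H\ge0$, with no upper bound on $H$ required. Split $\int_{p_0}^r\ov\rho'(H-d)$ over $\{H\le d\}$ and $\{H>d\}$. On $\{H\le d\}$ one has $0\le\ov\rho'(H-d)\le d|\ov\rho'|$ since $\ov\rho'\le0$ and $0\le d-H\le d$; on $\{H>d\}$ one has $\ov\rho'(H-d)\le0$, which only \emph{improves} the lower bound. Hence
\[
\Lambda_H(r)\ge\mu-2\max_{[p_0,0]} B-2gd\|\ov\rho'\|_{L_1}=\mu-\mu_*
\]
for all $H\ge0$; this is precisely the paper's estimate \eqref{Estq}. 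So the natural invariant set is the cone $\{H\in{\rm C}([p_0,0]):H\ge0\}$ (which is what the paper uses), or equivalently $\mathcal{X}_M$ with $M=|p_0|\,\delta^{-1/2}$, and no upper barrier at $d$ is needed.

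In particular your Step~2 claim that invariance of $\mathcal{X}_d$ uses the first inequality in \eqref{RES*} is wrong: that condition plays no role in this proposition, and the paper does not invoke it here. It enters only later, in the shooting argument preceding Proposition~\ref{P:LS}, to guarantee $H(0;\mu)>d$ for $\mu$ close to $\mu_*$. Your ``Main obstacle'' paragraph gropes toward the right fix but remains circular, since it appeals to Step~1 for $H$ up to $M>d$ while Step~1 as written only claims the bound for $H\le d$. Once you recognize that the lower bound holds on the full cone $\{H\ge0\}$, your weighted-norm contraction and your Step~3 (positivity of $H'$ and the $W^2_r$ regularity via the chain rule) go through as written.
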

\begin{proof} Let $\mu>\mu_*$ be fixed.
Given $p_1\in(p_0,0]$    and $H\in{\rm C}([p_0,p_1],[0,\infty))$, we define
\[
T_1H(p):=\int_{p_0}^p\Big(\mu-2\int_{p_0}^r [g\ov\rho'(s)(H(s)-d)+\beta(s)]\, ds\Big)^{-1/2}\, dr,\qquad p\in[p_0,p_1].
\]
We now show that $T_1$ is a self-map. 
Indeed, recalling that $\ov \h'\leq 0$, it holds that
\begin{equation}\label{Estq}
\begin{aligned}
\mu-2\int_{p_0}^r [g\ov\rho'(s)(H(s)-d)+\beta(s)]\, ds
&=\mu-2B(r)-2\int\limits_{[H\leq d]}  g\ov\rho'(s)(H(s)-d) \, ds\\[1ex]
&\hspace{0,425cm}-2\int\limits_{[H> d]} g\ov\rho'(s)(H(s)-d)\, ds\\[1ex]
&\geq\mu-2\max_{[p_0,0]} B-2\int_{p_0}^{p_1}   gd|\ov\rho'(s)| \, ds\\[1ex]
&\geq \mu-\mu_*>0, \qquad r\in[p_0,p_1],
\end{aligned}
\end{equation}
and consequently  $T_1H \in{\rm C}([p_0,p_1],[0,\infty)).$ 

If $p_1$ is sufficiently close to $p_0$, then $T_1$ is a contraction.
Indeed,  given $H,\, K\in{\rm C}([p_0,p_1],[0,\infty))$ and $p\in[p_0,p_1],$ \eqref{Estq} shows that
\begin{align*}
|T_1H(p)-T_1K(p)|&\leq \frac{g(p_1-p_0)\|\ov\h'\|_{L_1((p_0,0))}}{(\mu-\mu_*)^{3/2}}\|H-K\|_{{\rm C}([p_0,p_1])}\leq \frac{1}{2}\|H-K\|_{{\rm C}([p_0,p_1])}
\end{align*}
if 
\begin{align}\label{R2}
p_1\leq p_{1,\mu} :=\min\Big\{0,\,p_0+\frac{(\mu-\mu_*)^{3/2}}{2g(\|\ov\h'\|_{L_1((p_0,0))}+1)}\Big\}.
\end{align}
Observing that ${\rm C}([p_0,p_1],[0,\infty))$ is a complete metric space, the Banach contraction principle yields the existence and uniqueness of a  nonnegative solution  $H_1\in W^2_r((p_0,p_{1,\mu}))$ to \eqref{FPE}.
We next prove  that as long as the right endpoint of the interval of existence does not reach $0$, we may extend it to the right by 
the amount of
$$\frac{(\mu-\mu_*)^{3/2}}{2g(\|\ov\h'\|_{L_1((p_0,0))}+1)}.$$
Indeed, assume that  $p_{1,\mu}<0$.
Given $p_2\in (p_{1,\mu},0]$ and $H\in C([p_{1,\mu},p_2],[0,\infty))$ we set
 \[
T_2H(p):=H_1(p_{1,\mu})+\int_{p_{1,\mu}}^p\Big(c_\mu-2\int_{p_{1,\mu}}^r [g\ov\rho'(s)(H(s)-d)+\beta(s)]\, ds\Big)^{-1/2}\, dr, \qquad p\in[p_{1,\mu},p_2],
\]
and 
\[
c_\mu:=(H_1'(p_{1,\mu}))^{-2}= \mu-2\int_{p_0}^{p_{1,\mu}} [g\ov\rho'(s)(H_1(s)-d)+\beta(s)]\, ds >0.
\]
The same arguments as above yield
\[
c_\mu-2\int_{p_{1,\mu}}^r [g\ov\rho'(s)(H(s)-d)+\beta(s)]\, ds\geq\mu-\mu_*>0,\qquad p\in[p_{1,\mu},p_2],
\]
hence $T_2H\in C([p_{1,\mu},p_2],[0,\infty))$ and $(T_2H)^{(k)}(p_{1,\mu}) =H_1^{(k)}(p_{1,\mu})$ for $k\in\{0,\,1\}.$
 Furthermore,  given  $H,\, K\in C([p_{1,\mu},p_2],[0,\infty))$ and $p\in[p_{1,\mu},p_2]$, it holds that 
\begin{align*}
&|T_2H(p)-T_2K(p)|\leq \frac{g(p_2-p_{1,\mu})\|\ov\h'\|_{L_1((p_0,0))}}{(\mu-\mu_*)^{3/2}}\|H-K\|_{C([p_{1,\mu},p_2])}\leq \frac{1}{2}\|H-K\|_{C([p_{1,\mu},p_2])}
\end{align*}
provided that 
\begin{align*} 
p_2\leq p_{2,\mu} :=\min\Big\{0,\,p_{1,\mu}+\frac{(\mu-\mu_*)^{3/2}}{2g(\|\ov\h'\|_{L_1((p_0,0))}+1)}\Big\}.
\end{align*}
Hence,  $T_2$ possesses a fixed point $H_2\in W^2_r((p_{1,\mu},p_{2,\mu}))$. 
Thus, we may extend $H_1$ to a solution to \eqref{FPE} which lies in  $ W^2_r((0,p_{2,\mu}))$ and which equals $H_2$ on $(p_{1,\mu},p_{2,\mu})$.
Arguing in this way, if necessary, we may extend (in a finite number of steps) $H_1$ onto the whole interval $[p_0,0].$
The uniqueness claim is obvious.
\end{proof} 

We next show that the solution found in Proposition \ref{T:todo} depends smoothly on the parameter~$\mu$.

\begin{lemma} 
	\label{L:PK}
For any $\mu>\mu_*$, let $H(\,\cdot\,;\mu)$ denote the solution to the fixed point equation \eqref{FPE} as given by Proposition \ref{T:todo}. Then, the mapping
 \begin{equation}\label{Map:mu}
[\mu\mapsto H(\,\cdot\,;\mu)]:(\mu_*,\infty)\to C([p_0,0])
 \end{equation} is smooth.
\end{lemma}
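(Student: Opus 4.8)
The plan is to recast the fixed point equation \eqref{FPE} as the statement $F(\mu, H) = 0$ for a suitable smooth map $F$ between Banach spaces, and then apply the implicit function theorem, using Proposition \ref{T:todo} to supply the zero $(\mu, H(\,\cdot\,;\mu))$ around which to linearize. Concretely, fix $\mu_0 > \mu_*$ and choose a small open interval $U \ni \mu_0$ together with a constant $\delta>0$ so that $\mu - \mu_* > \delta$ for all $\mu \in U$. On the open set
\[
\mathcal{O} := \Big\{ (\mu, H) \in U \times {\rm C}([p_0,0]) \ : \ \mu - 2\!\!\int_{p_0}^r\! [g\ov\rho'(s)(H(s)-d)+\beta(s)]\, ds > \tfrac{\delta}{2} \ \text{for all } r\in[p_0,0] \Big\},
\]
which is nonempty and open by \eqref{Estq} and which contains $(\mu_0, H(\,\cdot\,;\mu_0))$, define
\[
F(\mu, H)(p) := H(p) - \int_{p_0}^p \Big(\mu - 2\!\!\int_{p_0}^r\! [g\ov\rho'(s)(H(s)-d)+\beta(s)]\, ds\Big)^{-1/2} dr , \qquad p \in [p_0,0].
\]
Then $F : \mathcal{O} \to {\rm C}([p_0,0])$ and, by Proposition \ref{T:todo}, $F(\mu, H(\,\cdot\,;\mu)) = 0$ for every $\mu \in U$.

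First I would check that $F$ is smooth. The inner integral $r \mapsto \int_{p_0}^r g\ov\rho'(s)(H(s)-d)\, ds$ depends linearly and boundedly on $H \in {\rm C}([p_0,0])$ since $\ov\rho' \in L_1((p_0,0))$, while the $\beta$-term is a fixed element of ${\rm C}([p_0,0])$; together with the linear dependence on $\mu$, the argument of the power $(\,\cdot\,)^{-1/2}$ is an affine (hence smooth) function of $(\mu,H)$ with values in ${\rm C}([p_0,0])$. On $\mathcal{O}$ this argument is bounded below by $\delta/2 > 0$, so composing with the map $t \mapsto t^{-1/2}$, which is smooth on $(\delta/2,\infty)$ and acts smoothly by superposition on ${\rm C}([p_0,0])$, and then integrating in $p$ (a bounded linear operator ${\rm C}([p_0,0]) \to {\rm C}([p_0,0])$), shows $F$ is $C^\infty$ jointly in $(\mu,H)$.

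Next I would compute the partial derivative $D_H F(\mu_0, H_0)$, where $H_0 = H(\,\cdot\,;\mu_0)$, and show it is an isomorphism of ${\rm C}([p_0,0])$. Differentiating, $D_H F(\mu_0,H_0) = \id - K$, where $K$ is the linear operator
\[
(K\phi)(p) = \int_{p_0}^p g\Big(\mu_0 - 2\!\!\int_{p_0}^r\! [g\ov\rho'(H_0-d)+\beta]\Big)^{-3/2}\Big(\int_{p_0}^r \ov\rho'(s)\phi(s)\, ds\Big) dr .
\]
The key point is that $K$ is a Volterra-type operator: $(K\phi)(p)$ depends only on $\phi|_{[p_0,p]}$, and the iterated-kernel estimate $\|K^n\| \le C^n (p - p_0)^n / n!$ — which follows from the uniform lower bound $\mu_0 - \mu_* > 0$ (giving a bound on the $(\,\cdot\,)^{-3/2}$ factor) and $\|\ov\rho'\|_{L_1}<\infty$, exactly as in the contraction estimates in the proof of Proposition \ref{T:todo} — implies the Neumann series $\sum_{n\ge 0} K^n$ converges in operator norm. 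Hence $\id - K$ is invertible. The implicit function theorem (e.g. \cite[Theorem~1.7]{CR71}-style, or the standard Banach-space version) then yields a smooth map $\mu \mapsto \tilde H(\mu)$ near $\mu_0$ with $F(\mu, \tilde H(\mu)) = 0$; by the uniqueness in Proposition \ref{T:todo} this must coincide with $H(\,\cdot\,;\mu)$ on a neighborhood of $\mu_0$. Since $\mu_0 \in (\mu_*,\infty)$ was arbitrary, the map \eqref{Map:mu} is smooth on all of $(\mu_*,\infty)$.

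The main obstacle is purely one of bookkeeping: verifying that the superposition operator $t \mapsto t^{-1/2}$ acts smoothly on ${\rm C}([p_0,0])$ restricted to functions bounded below away from zero (routine, since ${\rm C}([p_0,0])$ is a Banach algebra and $t^{-1/2}$ is analytic on $(0,\infty)$), and — slightly more delicate — confirming the Volterra structure of $K$ so that invertibility of $\id - K$ is automatic rather than requiring a Fredholm argument. Everything else is a direct application of the implicit function theorem, with the needed a priori bounds already contained in the proof of Proposition \ref{T:todo}.
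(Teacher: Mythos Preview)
Your approach is correct and matches the paper's overall strategy: both apply the implicit function theorem to the defect map $F(\mu,H)=0$ on an open subset of $\R\times C([p_0,0])$, using Proposition~\ref{T:todo} to supply the zero. The only substantive difference is how invertibility of $D_HF(\mu_0,H_0)=\id-K$ is established. The paper observes that $K$ maps $C([p_0,0])$ continuously into $W^2_r((p_0,0))$, which embeds compactly into $C([p_0,0])$; hence $K$ is compact, $\id-K$ is Fredholm of index zero by Riesz--Schauder, and injectivity is obtained via Gronwall's inequality. You instead exploit the Volterra structure of $K$ to sum the Neumann series directly. Your route is more elementary and bypasses the Fredholm machinery, while the paper's route makes the compactness explicit.

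Two small caveats. First, the pointwise bound $|K^n\phi(p)|\le C^n(p-p_0)^n\|\phi\|_\infty/n!$ as stated requires $\ov\rho'$ bounded; with $\ov\rho'\in L_1$ one instead gets $|K^n\phi(p)|\le C^n A(p)^n\|\phi\|_\infty/n!$ where $A(p)=\int_{p_0}^p|\ov\rho'|$, which is equally sufficient for convergence of the Neumann series. Second, the identification $\tilde H(\mu)=H(\cdot;\mu)$ via Proposition~\ref{T:todo} needs uniqueness in a full $C$-neighborhood of $H_0$, not only among nonnegative $H$; the paper notes this extension explicitly (uniqueness in $\mathcal{U}_\e$ for $\mu>\mu_*+2\e g\|\ov\rho'\|_{L_1}$), and in fact your Volterra argument supplies it as well.
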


\begin{proof}
We prove that, given  $\e>0$, the mapping \eqref{Map:mu} is smooth on $(\mu_*+2\e g\|\ov\h'\|_{L_1((p_0,0))},\infty).$
	This claim follows by  applying the implicit function theorem to the equation $\kF(H,\mu)=0$, where 
	$\kF: \mathcal{U}_\e \times(\mu_*+2\e g\|\ov\h'\|_{L_1((p_0,0))},\infty)\subset {\rm C}([p_0,0])\times\R  \to {\rm C}([p_0,0])$ is defined by
	\begin{align*}
		\kF(H,\mu)(p):= H(p) -  \int_{p_0}^p\Big(\mu-2\int_{p_0}^r [g\ov\rho'(s)(H(s)-d)+\beta(s)]\, ds\Big)^{-1/2}\, dr, \qquad p\in [p_0,0].
	\end{align*}
	Here
	\[
	\mathcal{U}_\e:=\{H\in  {\rm C}([p_0,0])\,:\, H>-\e\}
	\]
	is an open subset of ${\rm C}([p_0,0])$.
	 Arguing as in the derivation of \eqref{Estq} it can be seen that the operator $\kF$ is well-defined.
	Moreover, $\kF$ is  smooth. 
	The partial derivative $\partial_H\kF(H,\mu)[\wt  H] $ of $\kF$ with respect to $H$ at a given point $(H,\mu)\in \mathcal{U}_\e \times(\mu_*+2\e g\|\ov\h'\|_{L_1((p_0,0))},\infty)$ 
	can be expressed as
	\begin{align*}
		\partial_H\kF(H,\mu): {\rm C}([p_0,0]) \to {\rm C}([p_0,0]),\quad \wt H \mapsto  \wt H - \cK[\wt H], 
	\end{align*}
	where the operator $\cK:{\rm C}([p_0,0])\to{\rm C}([p_0,0])$ is given by
	\begin{align*}
		\cK[\wt H](p)
		= g \int_{p_0}^{p}\Big(\mu-2\int_{p_0}^r [g\ov\rho'(H-d)+\beta]\, ds\Big)^{-3/2} \Big(\int_{p_0}^r\ov\rho'\wt H\, ds \Big)\, dr, \qquad p\in [p_0,0].
	\end{align*}
	One can easily verify that $\cK$ actually maps continuously into $W^2_r((p_0,0))$. 
	Since the embedding of $W^2_r((p_0,0))$ into ${\rm C}([p_0,0])$ is compact it follows that $\cK$ is a compact operator. 
	Hence, $\partial_H\kF(H,\mu)$ is a compact perturbation of the identity. 
	Using the Riesz--Schauder theorem, we can conclude that $\partial_H\kF(H,\mu)$ is a Fredholm operator of index zero. 
	Furthermore, $\partial_H\kF(H,\mu)[\wt H]=0$ if and only if $\wt H = \cK[\wt H]$. In this case, $\wt H$ satisfies the inequality
	\begin{align*}
		|\wt H(p)|&		\le g \int_{p_0}^p (\mu-\mu_*-2\e g\|\ov\h'\|_{L_1((p_0,0))})^{-3/2} \int_{p_0}^r |\ov\rho'(s)|\cdot |\wt H(s)| \,ds \,dr\\[1ex]
		&\le g |p_0| (\mu-\mu_*-2\e g\|\ov\h'\|_{L_1((p_0,0))})^{-3/2} \int_{p_0}^p |\ov\rho'(s)|\cdot |\wt H(s)| \,ds 
	\end{align*} 
	for all $p\in[p_0,0]$. Hence, applying Gronwall's lemma, we obtain that $\wt H = 0$ on $[p_0,0]$. 
	This means that $ \ker\big(\partial_H\kF(H,\mu)\big)=\{0\}$ and thus, according to Fredholm's alternative, 
	$\partial_H\kF(H,\mu)$ is an isomorphism from ${\rm C}([p_0,0])$ to ${\rm C}([p_0,0])$.
	
	Due to the construction of $\kF$ it holds that
	\begin{align*}
		\text{$\kF(H(\,\cdot\,;\mu),\mu)=0\qquad$ for all $\mu\in(\mu_*+2\e g\|\ov\h'\|_{L_1((p_0,0))},\infty)$.} 
	\end{align*}
	Since the solution to \eqref{FPE} is unique also in $\mathcal{U}_\e$  if $\mu>\mu_*+2\e g\|\ov\h'\|_{L_1((p_0,0))}$ (this follows in a similar way as in Proposition \ref{T:todo}), the implicit function theorem implies that 
	$[\mu \mapsto H(\,\cdot\,;\mu)]$ is smooth in $(\mu_*+2\e g\|\ov\h'\|_{L_1((p_0,0))},\infty)$ for all $\e>0$.
	\end{proof}

In order to establish the existence of a solution to \eqref{LFS} it remains to prove that there is a $\mu > \mu_*$ such that 
$H(0;\mu)=d$. To this end   first note that  \eqref{Estq}  yields
\begin{equation}\label{eq:est_H(p,mu)}
H(p;\mu)\leq  \frac{|p_0|}{(\mu-\mu_*)^{1/2}} \qquad\text{for $p\in[p_0, 0]$}
\end{equation}
and hence $H(0;\mu)<d$ for sufficiently large $\mu$.
 If we additionally  show  that $H(0;\mu) > d$ for $\mu$ close to $\mu_*$, then the continuous dependence of $H$ on
$\mu$ (Lemma \ref{L:PK}) implies that $H(0;\mu)=d$ for some $\mu$. 
It turns out that this requires additional restrictions on the physical quantities.
The following example illustrates the approach in the simplified case of constant density.

\begin{ex}\label{Ex:1} Assume that $\ov\rho'=0$ and that $\beta(p)=C(p-p_0)^{-1/2}$.
Since the density is constant, the  function $\beta$ has to be interpreted as  the vorticity function of the flow, cf. \cite{CS04}. 
Unbounded vorticity functions are of relevance for example for flows in channels \cite{B62}. 
By Proposition \ref{T:todo}, the equation \eqref{FPE} possesses a unique solution $H=H(p;\mu)$  for each  $\mu>\mu_*=2C|p_0|^{1/2}$. 
The solution is  given by the explicit formula
\[
H(p;\mu)=\int_{p_0}^p\frac{1}{\sqrt{\mu-2 B(r)}}\, dr,\qquad p_0\leq p\leq 0.
\]
Since $H(0;\mu)< H(0,\mu_*) $ for $\mu>\mu_*$ and because of
\[
H(0;\mu_*)=\int_{p_0}^0\frac{1}{\sqrt{2B(0)-2 B(r)}}\, dr=\frac{1}{2\sqrt{C}}\int^0_{p_0}\frac{\sqrt{|p_0|^{1/2}+(r-p_0)^{1/2}}}{\sqrt{-r}}\, dr<\infty,
\]
the condition $H(0;\mu^*)>d$ has   to be  imposed, otherwise $H(0;\mu)<d$ for all $\mu>\mu_* $ and \eqref{LFS} has no solutions.
\end{ex}
\medskip

Let us now return  to the more involved setting of stratified waves addressed in this paper.
Given $\mu>\mu_*$, we define 
\[
p_\mu:= \min\big(\{0\}\cup H(\cdot\,;\mu)^{-1}(\{d\})\big),
\]  
Then recalling \eqref{eq:est_H(p,mu)},  we observe   that  $p_\mu=0$ and  $H(0;\mu)<d$ if $\mu $ is sufficiently large. 
The size condition  that we require reads  
\begin{align}\label{RES2}
d+\frac{p_0}{\Big(\mu_*-2\,\underset{[p_0,0]}\min B\Big)^{1/2}}<0.
\end{align}
Since $H(p;\mu)\in[0,d]$ for $p\in[p_0, p_\mu]$ and $\ov\h'\leq 0$, it follows that
\begin{align*}
 H(p_\mu;\mu)&=\int_{p_0}^{p_\mu} \Big(\mu-2\int_{p_0}^r [g\ov\rho'(s)(H(s)-d)+\beta(s)]\, ds\Big)^{-1/2} dr \geq \int_{p_0}^{p_\mu} \Big(\mu-2\int_{p_0}^r  \beta(s) \, ds\Big)^{-1/2}\, dr\\[1ex]
&\geq \frac{p_\mu-p_0}{\Big(\mu-2\,\underset{[p_0,0]}\min B\Big)^{1/2}}.
\end{align*}
In view of \eqref{RES2} we conclude (by a contradiction argument) that $p_\mu<0$ for all $\mu $ which are sufficiently close to $\mu_*.$
Hence
\[
H(0;\mu)>d \qquad\text{for  $\mu $   sufficiently close to $\mu_*.$}
\]
Together with Lemma \ref{L:PK} we conclude the following result establishing the existence of at least one strong laminar flow solution   to~\eqref{eq:Q'} and~\eqref{C4}.

\begin{prop}
	\label{P:LS}
	Assume that \eqref{RES2} is satisfied. 
	Then there is at least one solution $H\in W^2_r((p_0,0))$ to \eqref{LFS}. Moreover, $H'$ is a positive function.
\end{prop}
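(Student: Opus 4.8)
The plan is to deduce Proposition \ref{P:LS} from the building blocks already established in the section: the solvability of the fixed point equation \eqref{FPE} for every $\mu>\mu_*$ (Proposition \ref{T:todo}), the smooth dependence of $H(\,\cdot\,;\mu)$ on $\mu$ (Lemma \ref{L:PK}), and the two endpoint estimates that have just been derived, namely that $H(0;\mu)<d$ for $\mu$ large (via \eqref{eq:est_H(p,mu)}) and $H(0;\mu)>d$ for $\mu$ close to $\mu_*$ (via \eqref{RES2} and the chain of inequalities bounding $H(p_\mu;\mu)$ from below). The proof is then a one-line intermediate-value argument followed by a regularity remark.

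Concretely, first I would record that the map $g\colon(\mu_*,\infty)\to\R$, $g(\mu):=H(0;\mu)$, is continuous; this is immediate from Lemma \ref{L:PK}, since evaluation at $p=0$ is a bounded linear functional on ${\rm C}([p_0,0])$ and $[\mu\mapsto H(\,\cdot\,;\mu)]$ is smooth, hence continuous, into ${\rm C}([p_0,0])$. Second, I would invoke the two sign facts already in hand: there exist $\mu_1$ close to $\mu_*$ with $g(\mu_1)>d$ and $\mu_2$ large with $g(\mu_2)<d$. By the intermediate value theorem there is $\mu\in(\mu_1,\mu_2)\subset(\mu_*,\infty)$ with $g(\mu)=d$, i.e. $H(0;\mu)=d$. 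Third, for this $\mu$ the function $H:=H(\,\cdot\,;\mu)$ solves \eqref{FPE} with $H(0)=d$ and $H(p_0)=0$ by construction, and differentiating \eqref{FPE} twice (as already noted before Proposition \ref{T:todo}, using $H\in W^2_r((p_0,0))$ from Proposition \ref{T:todo}) shows that $H$ satisfies $\bigl(1/H'^2\bigr)'=-2[g\ov\rho'(H-d)+\beta]$ in $\mathcal D'((p_0,0))$; thus $H$ solves \eqref{LFS}. Finally, positivity of $H'$ is precisely the assertion $H'>0$ on $[p_0,0]$ from Proposition \ref{T:todo}, which holds because the integrand in \eqref{FPE} is bounded below by $(\mu-\mu_*)^{-1/2}>0$ by the estimate \eqref{Estq}.

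There is essentially no obstacle here, since all the analytic work — the fixed point construction, the smooth parameter dependence, and the two endpoint estimates — has already been done; the proposition is the assembly step. The only point that needs a word of care is making sure the intersection point $\mu$ actually lies in the open interval $(\mu_*,\infty)$ where Proposition \ref{T:todo} and Lemma \ref{L:PK} apply, which is automatic because both $\mu_1$ and $\mu_2$ were chosen there and the interval is connected. I would therefore write the proof as a short paragraph: continuity of $\mu\mapsto H(0;\mu)$ from Lemma \ref{L:PK}, the two sign estimates giving a zero of $\mu\mapsto H(0;\mu)-d$ by the intermediate value theorem, identification of the resulting $H$ as a solution of \eqref{LFS} via differentiation of \eqref{FPE}, and the positivity of $H'$ quoted from Proposition \ref{T:todo}.
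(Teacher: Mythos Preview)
Your proposal is correct and follows exactly the paper's own approach: the paper's proof of Proposition~\ref{P:LS} is the single sentence ``The proof follows from Proposition~\ref{T:todo}, Lemma~\ref{L:PK}, and the discussion preceding Proposition~\ref{P:LS},'' and what you have written is precisely a careful unpacking of that sentence via the intermediate value theorem. There is nothing to correct.
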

\begin{proof}
The proof follows from Proposition \ref{T:todo}, Lemma  \ref{L:PK}, and the discussion preceding Proposition~\ref{P:LS}.
\end{proof}

 \section{Local bifurcation}\label{Sec:3}
 
 In the following we assume that the density and the Bernoulli function satisfy  \eqref{CR}, \eqref{RES1}, and \eqref{L:LLL}, i.e., \begin{equation*}
 	\ov{\rho} \geq \rho_0,\, \ov{\rho}^{\prime} \leq 0
	\quad
	\text{in } [p_0,0],
	\quad
	\ov{\rho} \in W^1_r((p_0,0)),\ \beta \in L_r((p_0,0)),\quad r \in (1,\infty),
 \end{equation*}
 and that \eqref{RES2} holds true. 
 This guarantees, in particular, the existence of a laminar flow solution to~\eqref{eq:Q'} and~\eqref{C4}.
  
 The first goal of this section is to recast the weak formulation of~\eqref{eq:Q'} and~\eqref{C4} as an abstract bifurcation problem.
 For this goal only the H\"older regularity of the strong solutions to~\eqref{eq:Q'} and~\eqref{C4}  is needed.
  To proceed, we define the Banach space 
 \begin{align*}
 	\Y_1 := \{ \varphi + \partial_q\psi_1 + \partial_p \psi_2 \in \D'(\0) \,:\, \varphi\in L_\infty(\0), 
 	\, \psi_1,\,\psi_2\in {\rm C}^\alpha(\ov\0),\; \text{$\varphi,\psi_2$ are even, $\psi_1$ is odd in $q$}\}
 \end{align*}
 which is endowed with the norm 
 \begin{align*}
	\|u\|_{\Y_1} := \inf\{ \|\varphi\|_\infty + \|\psi_1\|_\alpha + \|\psi_2\|_\alpha \,:\,
	 u=\varphi + \partial_q\psi_1 + \partial_p \psi_2 ,\, \varphi\in L_\infty(\0), \, \psi_1,\,\psi_2\in {\rm C}^\alpha(\ov\0) \}
 \end{align*}
 and we set 
 \begin{align*}
 	\Y_2 &:= \{ \phi \in {\rm C}^{1+\alpha}(\R) \,:\, \text{$\phi$ is even} \},\\[1ex]
	\X &:= \{ h\in {\rm C}^{1+\alpha}(\ov\0) \,:\, \text{$h$ is even in $q$ and $h=0$ on $p=p_0$} \}.
 \end{align*}
 We recall that these Banach spaces consist only of  periodic distributions of period $1$.
 Furthermore, we fix a  laminar flow solution  $H \in W^2_r((p_0,0)) \hookrightarrow \X$ (as found in Proposition \ref{P:LS}) and we 
 let $\cO$ denote the open subset of $\X$ defined by
  \begin{align*}
 	\cO := \left\{ h\in\X \,:\, \underset{\overline{\Omega}}{\min}(h_p + H') >0 \right\}.
 \end{align*}
 
 The weak formulation of~\eqref{eq:Q'} and~\eqref{C4} can then be recast as the nonlinear and nonlocal equation 
 \begin{align}
 	\label{BP1}
 	\kF(\lambda,h) = 0,
 \end{align}
 where $\kF:=(\kF_1,\kF_2):(0,\infty)\times\cO\to \Y:=\Y_1\times\Y_2$ is given by
 \begin{align*}
 	\kF_1(\lambda,h) &:= \Big(\frac{h_q}{h_p+H'}\Big)_q - \Big(\frac{\lambda^2 + h_q^2}{2(h_p+H')^2} + \lambda^2 B + \lambda^2 g \ov\rho (h+H-d) \Big)_p +\lambda^2 g \ov\rho (h_p + H') ,\\[1ex]
 	\kF_2(\lambda,h) &:= \tr_0 h - (1-\p_q^2)^{-1}\tr_0\left[h-\frac{(\lambda^2+h_q^2)^{3/2}}{2\sigma\lambda^3} \left(\frac{\lambda^2+h_q^2}{(h_p+H')^2}+2\lambda^2g \ov\h h \right.\right.  \\[1ex]
 	&\hspace{7.6cm} \left.\left. -\int_{0}^1\frac{\lambda^2+h_q^2}{(h_p+H')^2}\, dq\right)\right].
 \end{align*}
 We regard the equation \eqref{BP1} as a bifurcation problem with   bifurcation parameter $\lambda$.
  Recalling that $H$ solves \eqref{LFS}, it holds  that 
 \begin{align}
 	\label{TS}
 	\kF(\lambda,0) = 0 \quad\text{for all $\lambda>0$}.
 \end{align}
 We note that $\kF$ is smooth with respect to its variables, that is 
  \begin{align}
 	\label{REG}
 	\kF\in {\rm C}^\infty((0,\infty)\times\cO,\Y).
 \end{align}
 The aim is to apply the Crandall--Rabinowitz theorem  \cite[Theorem~1.7]{CR71} on bifurcation from simple eigenvalues to \eqref{BP1} to determine 
 other solutions to \eqref{BP1} that depend on the variable $q$.
  To this end we need to determine $\lambda_*>0$ such that the partial Fr\'echet derivative $\partial_h\kF(\lambda_*,0)$ is a Fredholm operator of index zero with a one-dimensional kernel.
   A certain transversality condition also needs  to be satisfied, cf. Proposition \ref{Prop:2}.
   
   To begin, we observe that, given $\lambda>0$, the Fr\'echet derivative $\partial_h\kF(\lambda,0) = (L,T) \in \kL(\X,\Y)$ is expressed by
 \begin{align*}
 	L[h] &= \Big(\frac{h_q}{H'}\Big)_q + \lambda^2 \Big(\frac{ h_p}{H'^3}-g \ov\rho h \Big)_p +\lambda^2 g \ov\rho h_p \in \Y_1, \\[1ex]
 	T[h] &= \tr_0 h - (1-\p_q^2)^{-1}\tr_0\Big[h+\frac{\lambda^2}{\sigma} \Big(\frac{h_p}{H'^3} - g \ov\h h 
 	 	- \int_{0}^1 \frac{h_p}{H'^3}\, dq\Big)\Big]\in\Y_2, \qquad h\in\X.
 \end{align*}
 
\begin{lemma}\label{L:31}
	Given $\lambda>0,$ it holds that $\partial_h\kF(\lambda,0)$ is a Fredholm operator of index zero.
\end{lemma}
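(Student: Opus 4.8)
The plan is to show that the operator $\partial_h\kF(\lambda,0)=(L,T)$ is, up to a compact perturbation, an isomorphism onto $\Y=\Y_1\times\Y_2$, so that by the standard stability of the Fredholm index under compact perturbations we conclude it has index zero. The natural way to organise this is to split off the "principal part" of $(L,T)$ — the part that controls the top-order derivatives — and treat everything else as lower order.

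First I would analyse the interior operator $L$. Writing $L[h]=\big(h_q/H'\big)_q+\lambda^2\big(h_p/H'^3\big)_p+(\text{lower order})$, the principal part is a second-order elliptic operator in divergence form with $W^1_r$ (hence ${\rm C}^\alpha$, by the one-dimensional Sobolev embedding recalled in Remark~\ref{R:1}) coefficients $1/H'$ and $\lambda^2/H'^3$, both bounded below by a positive constant because of \eqref{C4}/the positivity of $H'$ from Proposition~\ref{P:LS}. The remaining terms $-\lambda^2\big(g\ov\rho h\big)_p+\lambda^2 g\ov\rho h_p = -\lambda^2 g\ov\rho' h$ involve only $h$ itself (no derivatives), so the map $h\mapsto -\lambda^2 g\ov\rho' h$ factors through the compact embedding ${\rm C}^{1+\alpha}(\ov\0)\hookrightarrow {\rm C}^\alpha(\ov\0)\hookrightarrow L_\infty(\0)\subset\Y_1$; hence it is a compact operator $\X\to\Y_1$. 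So $L$ equals its principal divergence-form part plus a compact operator. The principal part, viewed as a map from $\{h\in{\rm C}^{1+\alpha}(\ov\0): h=0\ \text{on}\ p=p_0\}$ into $\Y_1$ together with the Dirichlet trace space on $p=0$, is an isomorphism by the ${\rm C}^{1+\alpha}$ (equivalently, weak/generalised) solvability theory for divergence-form elliptic equations with ${\rm C}^\alpha$ coefficients on the rectangle — this is exactly the linear elliptic theory that underlies the space $\Y_1$ chosen by the authors (and the same type of result used for the regularity statement Theorem~\ref{T:RR}).

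Next I would handle the boundary operator $T$. Here the key structural point is the trick quoted in the introduction (from \cite{MM14,CM14b}): the second-order surface equation has been converted into the Dirichlet-type condition involving $(1-\p_q^2)^{-1}$, so that $T[h]=\tr_0 h - (1-\p_q^2)^{-1}\tr_0\big[h+\tfrac{\lambda^2}{\sigma}(\cdots)\big]$. Since $(1-\p_q^2)^{-1}$ smooths by two derivatives and $\tr_0$ is continuous from ${\rm C}^{1+\alpha}(\ov\0)$ into ${\rm C}^{1+\alpha}(\R)$, the composition $h\mapsto (1-\p_q^2)^{-1}\tr_0[\cdots]$ maps $\X$ compactly into $\Y_2={\rm C}^{1+\alpha}(\R)$ (it actually lands in ${\rm C}^{2+\alpha}$, which embeds compactly). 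Hence $T$ equals $\tr_0$ plus a compact operator $\X\to\Y_2$. Consequently $(L,T)$ equals the operator $h\mapsto(\text{principal divergence part of }L,\ \tr_0 h)$ plus a compact perturbation. That operator couples the interior elliptic problem with its Dirichlet boundary data on $p=0$ (and the built-in Dirichlet condition on $p=p_0$ already encoded in $\X$), so it is precisely the elliptic isomorphism described above; in particular it is Fredholm of index zero. Adding a compact operator preserves the Fredholm property and the index, so $\partial_h\kF(\lambda,0)$ is Fredholm of index zero, as claimed.

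The main obstacle I anticipate is not the compactness reductions, which are routine given the smoothing of $(1-\p_q^2)^{-1}$ and the Sobolev/Hölder embeddings, but rather pinning down the correct functional-analytic framework in which the bare principal operator $\big(h\mapsto (h_q/H')_q+\lambda^2(h_p/H'^3)_p,\ \tr_0 h\big)$ is genuinely an isomorphism onto $\Y_1\times\Y_2$ — i.e. verifying surjectivity and a priori estimates for a divergence-form elliptic problem with only ${\rm C}^\alpha$ coefficients, with the data $\Y_1$ measured in the somewhat unusual negative-order norm ($L_\infty$ plus $q$- and $p$-derivatives of ${\rm C}^\alpha$ functions). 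One has to be careful that the mixed Dirichlet data (inhomogeneous on $p=0$, homogeneous on $p=p_0$, periodic in $q$) is handled consistently with the parity constraints defining $\X$, $\Y_1$, $\Y_2$. Once this linear solvability statement is in place — and it is essentially the same tool needed for Theorem~\ref{T:RR} — the Fredholm-of-index-zero conclusion follows immediately.
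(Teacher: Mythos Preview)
Your overall strategy coincides with the paper's: decompose $\partial_h\kF(\lambda,0)=(L,T)$ as the principal divergence-form operator
\[
h\longmapsto\Big(\big(h_q/H'\big)_q+\lambda^2\big(h_p/H'^3\big)_p\,,\ \tr_0 h\Big),
\]
which is an isomorphism $\X\to\Y$ by the Schauder theory for divergence-form equations (the paper cites \cite[Theorem~8.34]{GT01}), plus a compact remainder; then invoke stability of the Fredholm index. Your treatment of $T$ via the smoothing of $(1-\p_q^2)^{-1}$ is exactly the paper's argument.

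There is, however, one genuine slip in your handling of the lower-order part of $L$. You collapse
\[
-\lambda^2\big(g\ov\rho h\big)_p+\lambda^2 g\ov\rho h_p=-\lambda^2 g\ov\rho' h
\]
and then assert that $h\mapsto -\lambda^2 g\ov\rho' h$ lands in $L_\infty(\0)\subset\Y_1$. But $\ov\rho'$ is only in $L_r((p_0,0))$, and the whole setting of the paper is that $r$ may be close to $1$ so that $\ov\rho'$ is in general unbounded; hence $\ov\rho' h$ need not lie in $L_\infty$, and your compactness factorisation through $L_\infty$ breaks down. The remedy --- and this is precisely what the paper does --- is \emph{not} to combine the two terms: keep them as $\partial_p\psi_2+\varphi$ with $\psi_2=-\lambda^2 g\ov\rho\, h\in{\rm C}^\alpha(\ov\0)$ (since $\ov\rho\in{\rm C}^\alpha$ by the one-dimensional embedding) and $\varphi=\lambda^2 g\ov\rho\, h_p\in{\rm C}^\alpha(\ov\0)\subset L_\infty(\0)$. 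This is exactly the structure that $\Y_1$ is designed to accommodate, and compactness of $h\mapsto(\psi_2,\varphi)$ into ${\rm C}^\alpha\times L_\infty$ follows from the compact embeddings ${\rm C}^{1+\alpha}(\ov\0)\hookrightarrow{\rm C}^1(\ov\0)\hookrightarrow{\rm C}^\alpha(\ov\0)$ together with multiplication by $\ov\rho\in{\rm C}^\alpha$. With this correction your argument is complete and matches the paper's.
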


\begin{proof}
	According to \cite[Theorem 8.34]{GT01}, the mapping 
	\begin{align*}
		\Big[ h \mapsto \Big( \Big(\frac{h_q}{H'}\Big)_q + \lambda^2 \Big(\frac{ h_p}{H'^3} \Big)_p \,,\, \tr_0 h \Big) \Big] : \X\to\Y
	\end{align*}
	is an isomorphism (we recall that $H'$ is positive).
	 Since $(1-\partial_q^2)^{-1} \in \mathcal{L}({\rm C}  (\R),{\rm C}^2(\R))$, it follows that the operator
	\begin{align*}
		\left[ h \mapsto \Big( -\lambda^2 g (\ov\rho h)_p +\lambda^2 g \ov\rho h_p \,,\,  - (1-\p_q^2)^{-1}\tr_0\Big[h+\frac{\lambda^2}{\sigma} \Big(\frac{h_p}{H'^3} - g \ov\h h 
		- \int_{0}^1 \frac{h_p}{H'^3}\, dq\Big)\Big] \Big) \right] : \X\to\Y
	\end{align*}
	is compact. Therefore, the desired claim follows.
\end{proof}

We want to determine  special values $\lambda_*$ of the wavelength parameter such that additionally  the kernel of $\partial_h\kF(\lambda_*,0)$   is one-dimensional.
Let thus $w\in\X$ be such that $\partial_h\kF(\lambda,0)[w] = 0$. From $L[w] = 0$ we find that the Fourier coefficient 
\begin{align*}
	w_k(p) := \int_0^1 w(q,p) \cos(2k\pi q) \,dq,\qquad k\in\N,\, p\in[p_0,0],
\end{align*}
satisfies the equation
\begin{align*}
	\lambda^2\Big( \frac{1}{H'^3} w_k' -  g \ov\rho w_k \Big)' + \lambda^2g\ov\rho w_k' - \frac{(2k\pi)^2}{H'} w_k = 0 \qquad \text{in $L_r((p_0,0))$}.
\end{align*}
Furthermore, since $T[w]=0$, we find that
\begin{align*}
	\big( \lambda^2 g\ov\rho(0) + \sigma(2k\pi)^2 \big)w_k(0) = \frac{\lambda^2}{H'^3(0)} w_k'(0),\qquad k\ge 1,
\end{align*}
and
\begin{align*}
	w_0(0) = 0,
\end{align*}
respectively.
Finally, since $w\in\X$, it holds that
\begin{align*}
	w_k(p_0) = 0,\qquad k\in\N.
\end{align*}
 Hence, for $k=0$, we find that  $w_0\in W^2_r((p_0,0))$ solves the system
\begin{equation}\label{BN0} 
 \begin{aligned}
 \left\{
 \begin{array}{lll}
  ( a^3 w_0')' - g\ov\rho' w_0  = 0 \qquad  \text{in $L_r((p_0,0))$},\\[2ex]
 w_0(p_0)=w_0(0)=0,
 \end{array}
 \right.
 \end{aligned}
 \end{equation}
where $$a:=1/H'\in W^1_r((p_0,0)).$$
Furthermore, given $k\geq 1,$ the function $w_k\in W^2_r((p_0,0))$ solves the system
\begin{equation}\label{BN} 
 \begin{aligned}
 \left\{
 \begin{array}{lll}
\lambda^2( a^3w' )' - \lambda^2g\ov\rho' w - \vartheta a w = 0 \qquad \text{in $L_r((p_0,0))$},\\[2ex]
\big( \lambda^2 g\ov\rho(0) + \sigma\vartheta \big)w(0) = \lambda^2a^3(0) w'(0),\\[2ex]
 w(p_0)=0.
 \end{array}
 \right.
 \end{aligned}
 \end{equation}
with $\vartheta:=(2k\pi)^2$.

The remainder of this section is organized as follows. In the first paragraph we specify a condition under which
\eqref{BN0} has only the trivial solution $w_0=0$. 
The objective  of the second paragraph is twofold. On the one hand we 
determine special values $\lambda_*$ of $\lambda$ such that \eqref{BN} has a one-dimensional space of solutions for $k=1$, 
respectively $\vartheta=(2\pi)^2$. 
On the other hand we prove that \eqref{BN} admits only the trivial solution $w_k=0$ for 
$\vartheta\in\{(2k\pi)^2\,:\, k\geq2\}$. 
The third paragraph treats the above mentioned transversality condition and the last paragraph is devoted to the proof of Theorem \ref{MT1}. 
\medskip

\paragraph{\textbf{Conditions such that $w_0=0$.}}
We now show that, under some additional restrictions (cf. \eqref{C:the0}), the system \eqref{BN0} has only the trivial zero solution.
Equivalently formulated, we show that the elliptic operator  $[u\mapsto ( a^3 u')' - g\ov\rho'u]$, which is supplemented  by homogeneous Dirichlet boundary conditions, does not have zero as an eigenvalue.
We point out that the coefficient of $u$ is positive and it may be unbounded, while the coefficient $a^3$ is not explicitly determined.
 This is where the assumption $\eqref{RES*}_2$ becomes important, as it constitutes
an explicit relation in terms of $d$, $p_0$, $\beta$, and $\ov\rho$ which ensures that \eqref{C:the0} is satisfied, see Example~\ref{Bei:2} below. 
We also emphasize that in the constant density case the assertion of Lemma \ref{L:w0} follows via maximum principles, hence \eqref{C:the0} (or \eqref{RES3}) is then not needed.
Moreover,  the restriction \eqref{C:the0} (or its weaker version \eqref{RES3}) is also used at several places in the   paragraphs below in order to establish the existence of a bifurcation point.
In particular, for homogeneous irrotational waves  (that is $\ov\rho=\rho\in\R$ and $\beta=0$), see Remark \ref{R:CD}, the explicit condition \eqref{RES3} ensures, 
due to the fact that $x_*<2$, that  the dispersion relation \eqref{DR} has at least a solution.
Since $x_*\approx1.9368$, this  shows that  \eqref{RES3} is close to being optimal  with respect to this issue.

 \begin{lemma}\label{L:w0}
Let 
\begin{align}\label{AAA}
A(p):=\int_{p_0}^p  \frac{g\ov\rho}{a^3}(s)\, ds,\qquad p\in[p_0,0],
\end{align}
and assume that 
\begin{align}\label{C:the0}
 e^{2A(0)}-2A(0)\leq 5.
 \end{align}
 Then \eqref{BN0} has only the trivial solution $w_0=0$.
\end{lemma}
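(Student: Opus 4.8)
The plan is to argue by contradiction. Suppose \eqref{BN0} has a solution $w_0\in W^2_r((p_0,0))$ with $w_0\not\equiv0$; I will derive that $A(0)\ge1$, contradicting \eqref{C:the0}. First I would record the elementary facts that will be used throughout: $w_0\in {\rm C}^1([p_0,0])$ by the embedding $W^2_r((p_0,0))\hookrightarrow {\rm C}^1([p_0,0])$ for $r>1$ (this is where the standing assumption $r>1$ enters); $a=1/H'$ is continuous and bounded away from $0$ on $[p_0,0]$, because $H'$ is positive and continuous there, so $a^3\in W^1_r$ is bounded below and above by positive constants; and $A'=g\ov\rho/a^3\ge0$ by \eqref{AAA} and \eqref{CR}, hence $A$ is nondecreasing with $A(p_0)=0$ and $A(0)>0$. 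Rewriting the equation as the first order system $w_0'=a^{-3}V$, $V'=g\ov\rho'w_0$ with $V:=a^3w_0'$ and coefficients in $L_r\subset L_1$, the standard uniqueness result for linear ODEs with integrable coefficients shows that $w_0(p_0)=0$ together with $w_0'(p_0)=0$ would force $w_0\equiv0$; hence $w_0'(p_0)\neq0$, and after replacing $w_0$ by $-w_0$ if necessary I may assume $w_0'(p_0)>0$.

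Next I would localise to the first nodal interval. Since $w_0>0$ near $p_0$ and $w_0(0)=0$, the function $w_0$ has a first zero $p_1\in(p_0,0]$ to the right of $p_0$, with $w_0>0$ on $(p_0,p_1)$ and $w_0(p_0)=w_0(p_1)=0$. On $[p_0,p_1]$ the function $V=a^3w_0'$ lies in $W^1_r\hookrightarrow {\rm C}([p_0,p_1])$. Since $w_0\ge0$ and $\ov\rho'\le0$ there (see \eqref{RES1}), $V'=(a^3w_0')'=g\ov\rho'w_0\le0$, so $V$ is nonincreasing; moreover $V(p_0)=a^3(p_0)w_0'(p_0)=:c_0>0$, while $w_0(p_1)=0$ and $w_0>0$ to its left give $V(p_1)\le0$. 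Integrating the equation over $[p_0,p_1]$, then integrating by parts (using \eqref{PR1} and $w_0(p_0)=w_0(p_1)=0$), and finally using $A'=g\ov\rho/a^3$, one obtains
\begin{align*}
c_0-V(p_1)=\int_{p_0}^{p_1}(-g\ov\rho')\,w_0\,dp=\int_{p_0}^{p_1}g\ov\rho\,w_0'\,dp=\int_{p_0}^{p_1}A'(p)\,V(p)\,dp.
\end{align*}
Letting $p_*\in(p_0,p_1]$ be the first zero of the nonincreasing continuous function $V$ (it exists since $V(p_0)>0\ge V(p_1)$), one has $0\le V\le c_0$ on $[p_0,p_*]$ and $V\le0$ on $[p_*,p_1]$, so, since $A'\ge0$,
\begin{align*}
c_0\le c_0-V(p_1)=\int_{p_0}^{p_1}A'\,V\,dp\le c_0\int_{p_0}^{p_*}A'\,dp=c_0\,A(p_*)\le c_0\,A(0),
\end{align*}
whence $A(0)\ge1$. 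On the other hand, $x\mapsto e^{2x}-2x$ is increasing on $[0,\infty)$ and equals $e^2-2>5$ at $x=1$, so condition \eqref{C:the0} forces $A(0)<1$. This contradiction shows $w_0\equiv0$.

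The step I expect to require care is the localisation to the first nodal interval $[p_0,p_1]$: this is precisely what lets the argument cover a sign-changing $w_0$ (the case in which $0$ would be a Dirichlet eigenvalue of higher index), because $A$ is anchored at $p_0$, so the estimate on the subinterval still closes against the hypothesis through $A(p_*)\le A(p_1)\le A(0)$. The remaining steps are bookkeeping, provided one has noted that $w_0$ and $V$ are continuous — again a point where $r>1$ is used. It seems worth remarking that this argument only uses the weaker bound $A(0)<1$; the full strength of \eqref{C:the0} is needed in the subsequent paragraphs (existence of a bifurcation point and exclusion of the higher Fourier modes), not in this lemma.
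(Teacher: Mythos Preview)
Your proof is correct and, in fact, more elementary than the paper's. The paper introduces the variable $z_0:=a^3w_0'-g\ov\rho w_0$, rewrites \eqref{BN0} as the system \eqref{SYS0}, and then argues that $z_0$ cannot vanish on $[p_0,0]$ by bounding a double integral involving exponential weights $e^{\pm A}$; this is where the precise inequality $e^{2A(0)}-2A(0)\le 5$ enters. You instead work with the simpler variable $V=a^3w_0'$, restrict to the first nodal interval of $w_0$, use the monotonicity of $V$ coming from $\ov\rho'\le0$, and close the estimate with a single integration by parts. This buys you a shorter and more transparent argument, and, as you correctly observe, it only requires the weaker condition $A(0)<1$ rather than the full force of \eqref{C:the0} (the latter being equivalent to $A(0)\le x_*/2\approx0.9684$). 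The paper's choice of $z_0$ is motivated by consistency with the later analysis of \eqref{SYSi}, where the same substitution is reused; for the present lemma taken in isolation your approach is the cleaner one.
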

\begin{proof}
Let $w_0$ be a solution to \eqref{BN0}. 
Then, letting $z_0:=a^3w_0'-g\ov\rho w_0$ we may recast $\eqref{BN0}_1$  as a linear system of first order ODEs with continuous coefficients, namely
 \begin{equation}\label{SYS0}
\left\{
\begin{array}{lll}
w_0' = A' w_0+a^{-3}z_0,\\[1ex]
z_0' = -g\ov\rho A'w_0-A' z_0 .
\end{array}
\right.
\end{equation}
To obtain a contradiction, we assume that  $z_0(p_0)=a^3(p_0)w_0'(p_0)\neq0$ as the solution to \eqref{SYS0} that satisfies $(w_0,z_0)(p_0)=(0,0)$ is the trivial one. 
 Without loss of generality let $z_0(p_0)=:\alpha>0$. 
We next show that   $z_0>0$ in $[p_0,0]$ if \eqref{C:the0} holds true.
Since, by $\eqref{SYS0}_1$,
\begin{align}\label{L:efe}
w_0(p)e^{-A(p)}=\int_{p_0}^p\frac{z_0e^{-A}}{a^3}(s)\, ds,\qquad p\in[p_0,0],
\end{align}
this then contradicts the boundary condition $w_0(0)=0$ and the proof is complete. 

To prove that  $z_0$ is positive in $[p_0,0]$, we assume there exists  $p_1\in(p_0,0] $ with $z_0>0$ in $[p_0,p_1)$ and $z_0(p_1)=0$.
 The relation \eqref{L:efe} implies that  $w_0>0$ in $(p_0,p_1]$. 
 Invoking $\eqref{SYS0}_2$ it holds that
\begin{align}\label{EXC}
z_0(p)e^{A(p)}=\alpha-\int_{p_0}^p g\ov\rho(s)A'(s)e^{2A(s)}\int_{p_0}^s\frac{z_0(r)e^{-A(r)}}{a^3(r)}\, dr\, ds, \qquad p\in[p_0,0].
\end{align}
Since $A'$ is positive, we find that $z_0e^A$ is decreasing in $[p_0,p_1]$. 
  Consequently  
 \begin{align}\label{Bound10}
 0\leq z_0(p)< \alpha e^{-A(p)},\qquad p\in(p_0,p_1].  
 \end{align}
 Since $z_0(p_1)=0,$ the relation \eqref{EXC} yields
 \begin{align*}
 \alpha=\int_{p_0}^{p_1}g \ov\rho (s)A'(s)e^{2A(s)}
  \int_{p_0}^s \frac{z_0(r)e^{-A(r)}}{a^3(r)}\, dr\, ds,
 \end{align*}
 and using \eqref{Bound10} we arrive at
 \begin{align*}
 1<\int_{p_0}^{p_1}g\ov\rho(s)A'(s)e^{2A(s)}
  \int_{p_0}^s \frac{e^{ -2A(r)}}{a^3(r)}\, dr\, ds.
 \end{align*}
 Since $\ov\rho(s)\leq \ov\rho(r)$ for $p_0\leq r\leq s\leq 0$ and recalling the definition of $A$ we get in view of \eqref{C:the0} that
  \begin{align*}
 1&<\int_{p_0}^{p_1}A'(s)e^{2A(s)}
  \int_{p_0}^s A'(r)e^{ -2A(r)}\, dr\, ds=\frac{1}{2}\int_{p_0}^{p_1}A'(s)(e^{2A(s)} -1)\, ds\\[1ex]
  &\leq \frac{1}{4}\big(e^{2A(0)}-2A(0)-1\big)\leq 1,
 \end{align*}
 which is a contradiction.
Our assumption is thus false and the proof complete.
\end{proof}

We now provide  a quantitative condition which ensures that \eqref{C:the0} is satisfied.
 
\begin{ex}\label{Bei:2} Let $\mu_*$ be as defined in \eqref{RES*} and assume that \eqref{RES2} holds. 
If
\begin{align}\label{RES3}
\frac{gd^{3} \ov\rho(p_0)|p_0|}{\Big[p_0^2-\Big(\mu_*-2\underset{[p_0,0]}\min B\Big)d^2\Big]^{3/2}}\leq \frac{x_*}{2},
\end{align}
where $x_*\approx1.9368$ is the positive solution to
\begin{equation}\label{exx}
e^x-x=5,
\end{equation}
 then \eqref{C:the0} is satisfied.
\end{ex}
\begin{proof}
Recall that $a=1/H'>0$ in $[p_0,0]$ with $H$ being the solution to \eqref{LFS}   that we fixed earlier.
 Since $H(p_0)=0$ and $H(0)=d$, there exists $p_1\in[p_0,0]$ such that $H'(p_1)\leq d/|p_0|.$
Integration of $\eqref{LFS}_1$ over $[p_1,p],$ with $p\in[p_0,0]$ arbitrary, yields
\begin{align*}
a^2(p)&=a^2(p_1)-2\int_{p_1}^p[g\ov\rho'(s)(H(s)-d)+\beta(s)]\, ds\\[1ex]
&\geq \frac{p_0^2}{d^2}-2\int_{p_0}^0gd|\rho'(s)|\, ds-2B(p)+2B(p_1)\\[1ex]
&\geq \frac{p_0^2}{d^2}-\Big(\mu_*-2\min_{[p_0,0]} B\Big),
\end{align*}
the positivity of the constant on the right-hand side  of this inequality being equivalent to~\eqref{RES2}.
Consequently, 
\[
\frac{1}{a(p)}\leq\frac{d}{\sqrt{p_0^2-\Big(\mu_*-2\underset{[p_0,0]}\min B\Big)d^2}},\quad p\in[p_0,0],
\]
and, recalling that $\ov\rho\leq \ov\rho(p_0)$, we get
\begin{align}\label{kl*}
A(0)=\int_{p_0}^0\frac{g\ov\rho(s)}{a^3(s)}\, ds\leq g\ov\rho(p_0)\int_{p_0}^0\frac{1}{a^3(s)}\, ds\leq \frac{gd^3\ov\rho(p_0)|p_0|}{\Big[p_0^2-\Big(\mu_*-2\underset{[p_0,0]}\min B\Big)d^2\Big]^{3/2}}\leq\frac{x_*}{2}.
\end{align}
Hence \eqref{C:the0} holds true.
\end{proof}
 
Relation \eqref{RES3} provides an explicit condition which ensures that the system \eqref{BN0} has only the trivial solution  $w_0=0$.
 Consequently, for all $\lambda>0$, the kernel of  $\partial_h\kF(\lambda,0)$ does not contain functions that depend only on the variable $p$ (except  for the  zero function).
 We now address the second issue of determining $\lambda_*>0$ such that $\partial_h\kF(\lambda_*,0)$ has a one-dimensional kernel spanned by a function of the form $w_1(p)\cos(2\pi q)$, with 
 $w_1$ being (up to a multiplicative constant) the only  nontrivial solution to \eqref{BN}  when $\lambda=\lambda_*$ and $\vartheta\in\{(2k\pi)^2\,:\, k\in\N\}$.\medskip

\paragraph{\textbf{The system \eqref{BN}  with $\vartheta$ as a variable.}}
We seek  $\lambda>0$ such that    \eqref{BN} has a one-dimensional space of solutions   for $\vartheta=(2\pi)^2$ 
and only the trivial solution for $\vartheta>(2\pi)^2$.
To this end we first  determine the dimension of the space of solutions to \eqref{BN}. 
Given $\lambda\in (0,\infty)$ and $\vartheta\in\R$, we let $R_{\lambda,\vartheta}:W^2_{r,0}((p_0,0))\to L_r((p_0,0))\times\R$ denote the Sturm--Liouville type operator
\begin{align}\label{SLop}
R_{\lambda,\vartheta}[w]:=\begin{pmatrix}
\lambda^2( a^3w' )' - \lambda^2g\ov\rho' w - \vartheta a w\\[1ex]
 \lambda^2a^3(0) w'(0)-\big( \lambda^2 g\ov\rho(0) + \sigma\vartheta \big)w(0)
\end{pmatrix},
\end{align}
where we set
\begin{align*}
W^2_{r,0}((p_0,0))=\{w\in W^2_{r}((p_0,0))\,:\, w(p_0)=0\}.
\end{align*}
We associate to \eqref{BN} the initial value problems
\begin{equation}\label{SYS1}
\left\{
\begin{array}{lll}
\lambda^2( a^3\wt w_1' - g\ov\rho \wt w_1)' + \lambda^2g\ov\rho \wt w_1' - \vartheta a \wt w_1 = 0\qquad \text{in $L_r((p_0,0))$},\\[1ex]
\wt w_1(p_0)=0,\quad \wt w_1'(p_0)=1,
\end{array}
\right.
\end{equation}
and 
\begin{equation}\label{SYS2}
\left\{
\begin{array}{lll}
\lambda^2( a^3\wt w_2' - g\ov\rho \wt w_2)' + \lambda^2g\ov\rho \wt w_2' - \vartheta a \wt w_2 = 0\qquad \text{in $L_r((p_0,0))$},\\[1ex]
\wt w_2(0)=  \lambda^2a^3(0),\quad \wt w_2'(0)=\lambda^2 g\ov\rho(0) + \sigma\vartheta.
\end{array}
\right.
\end{equation}

\begin{lemma}
\label{LEM:KER}
Given $\lambda>0$  and $\vartheta\in\R$, the operator~$R_{\lambda,\vartheta}:W^2_{r,0}((p_0,0))\to L_r((p_0,0))\times\R$ defined in \eqref{SLop}
is a Fredholm operator of index zero and ${\rm dim} \ker R_{\lambda,\vartheta}\le 1$.
Furthermore,   ${\rm dim} \ker R_{\lambda,\vartheta}= 1$ if and only if  the solutions $\wt w_1$ and $\wt w_2$  to $\eqref{SYS1}$ and $\eqref{SYS2}$  
are linearly dependent. In this case it holds that
 $$\ker R_{\lambda,\vartheta}={\rm span}\{\wt w_1\}={\rm span}\{\wt w_2\}.$$
\end{lemma}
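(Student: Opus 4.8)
The plan is to analyze $R_{\lambda,\vartheta}$ through the associated second-order ODE and its two canonical solutions $\wt w_1$ and $\wt w_2$. First I would observe that the differential expression in \eqref{SYS1}--\eqref{SYS2} is, after expanding, a genuine linear second-order ODE in $w$ whose leading coefficient is $\lambda^2 a^3$, which is strictly positive and belongs to $W^1_r((p_0,0))$; the zeroth-order coefficient is a combination of $g\ov\rho'$ and $\vartheta a$, which lies in $L_r((p_0,0))$. Hence by standard Carath\'eodory existence-uniqueness theory the initial value problems \eqref{SYS1} and \eqref{SYS2} each have a unique solution in $W^2_r((p_0,0))$, and in particular $\wt w_1 \not\equiv 0$ (because $\wt w_1'(p_0)=1$) and $\wt w_2 \not\equiv 0$ (because $\wt w_2(0)=\lambda^2 a^3(0)>0$).

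Next I would prove the Fredholm property. Writing $R_{\lambda,\vartheta}=R_{\lambda,0}-\vartheta S$ where $S[w]:=(aw,\sigma w(0))$ is compact as a map $W^2_{r,0}((p_0,0))\to L_r((p_0,0))\times\R$ (using the compact embedding $W^2_r\hookrightarrow {\rm C}^1$ on a bounded interval, plus boundedness of multiplication by $a\in L_r$), and similarly absorbing the $g\ov\rho'$ term, it suffices to show $R_{\lambda,0}$ (or any convenient comparison operator with the same principal part and the Dirichlet/Robin boundary data) is an isomorphism. This follows from classical one-dimensional elliptic theory: the operator $w\mapsto ((\lambda^2 a^3 w')', \lambda^2 a^3(0)w'(0)-\lambda^2 g\ov\rho(0) w(0))$ on $W^2_{r,0}$ is invertible because the homogeneous problem $((\lambda^2 a^3 w')'=0$, $w(p_0)=0$, Robin condition at $0$) has only the trivial solution (a linear-in-the-antiderivative-of-$a^{-3}$ argument shows the Robin condition forces the constant to vanish). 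Since $R_{\lambda,\vartheta}$ is a compact perturbation of an isomorphism, it is Fredholm of index zero.

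Then I would identify the kernel. If $w\in\ker R_{\lambda,\vartheta}$ and $w\neq 0$, then $w$ solves the ODE with $w(p_0)=0$. By uniqueness for \eqref{SYS1}, $w$ is a scalar multiple of $\wt w_1$ (distinguishing the cases $w'(p_0)\neq 0$, giving $w\in{\rm span}\{\wt w_1\}$, and $w'(p_0)=0$, which by uniqueness forces $w\equiv 0$, contradiction). Likewise, the Robin boundary condition at $p=0$ built into \eqref{SLop} is precisely the statement that $(w(0),w'(0))$ is parallel to $(\lambda^2 a^3(0),\lambda^2 g\ov\rho(0)+\sigma\vartheta)=(\wt w_2(0),\wt w_2'(0))$, so by uniqueness for \eqref{SYS2}, $w\in{\rm span}\{\wt w_2\}$. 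This already gives ${\rm dim}\ker R_{\lambda,\vartheta}\le 1$. Conversely, the kernel is one-dimensional exactly when there exists a nonzero $w$ satisfying both the Dirichlet condition at $p_0$ and the Robin condition at $0$, i.e. when $\wt w_1$ itself satisfies the Robin condition at $0$, equivalently when $(\wt w_1(0),\wt w_1'(0))$ is parallel to $(\wt w_2(0),\wt w_2'(0))$, i.e. when $\wt w_1$ and $\wt w_2$ are linearly dependent (both being nontrivial solutions of the same linear ODE, proportionality of one point's data is equivalent to proportionality as functions). In that case $\ker R_{\lambda,\vartheta}={\rm span}\{\wt w_1\}={\rm span}\{\wt w_2\}$.

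\textbf{Main obstacle.} The only genuinely delicate point is making the low-regularity ODE theory rigorous: the coefficients are merely $L_r$ (and $a^3$ only $W^1_r$, not ${\rm C}^1$), so one must work with the Carath\'eodory/integral formulation rather than classical ODE theory, and be careful that "solution in $W^2_r$" matches "solution of the integrated first-order system with absolutely continuous components." This also underlies the claim that proportional initial data at one point implies global proportionality — it requires the uniqueness theorem in this weak setting, which holds because the reformulated first-order system has coefficients in $L_r\subset L_1$ and hence Osgood/Gronwall-type uniqueness applies. Everything else (compactness, Fredholm index, the kernel bookkeeping) is routine once that foundation is in place.
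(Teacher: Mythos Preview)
Your overall strategy is correct and close to the paper's, but there is one concrete slip in the Fredholm step. You claim that the comparison operator
\[
w\mapsto \big((\lambda^2 a^3 w')',\ \lambda^2 a^3(0)w'(0)-\lambda^2 g\ov\rho(0) w(0)\big)
\]
on $W^2_{r,0}$ is an isomorphism because the Robin condition forces the constant to vanish. It does not: solving $(\lambda^2 a^3 w')'=0$ with $w(p_0)=0$ gives $w(p)=c_1\int_{p_0}^p a^{-3}$, and the Robin condition becomes $c_1\big(1-g\ov\rho(0)\int_{p_0}^0 a^{-3}\big)=0$, which need not force $c_1=0$. The fix is trivial: absorb the zero-order boundary term $-\lambda^2 g\ov\rho(0)w(0)$ into the compact part as well (evaluation $w\mapsto w(0)$ is compact on $W^2_{r,0}$), leaving the Neumann operator $w\mapsto((\lambda^2 a^3 w')',\ \lambda^2 a^3(0)w'(0))$, which \emph{is} an isomorphism. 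This is exactly the decomposition $R_{\lambda,\vartheta}=R_I+R_c$ the paper uses.

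On the low-regularity ODE issue you flag: the paper sidesteps Carath\'eodory theory by the substitution $z=a^3 w'-g\ov\rho w$, which turns the equation into a first-order linear system with \emph{continuous} coefficients (the matrix involves $A'=g\ov\rho/a^3$ and $a$, all continuous), so classical existence--uniqueness applies directly. Your Carath\'eodory route works too, but the paper's change of variables is cleaner and makes the uniqueness argument immediate. For the kernel characterization the paper records the Abel-type identity $a^3(w_a'w_b-w_aw_b')\equiv\text{const}$ for any two solutions, from which colinearity follows; your argument via uniqueness of initial value problems at the two endpoints is equivalent and equally valid.
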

\begin{proof}
We first decompose  $R_{\lambda,\vartheta}=R_I+R_c,$ where  
\[
R_I[w]:=\begin{pmatrix}
\lambda^2( a^3w' )' \\[1ex]
 \lambda^2a^3(0) w'(0) 
\end{pmatrix}\qquad\text{and}\qquad 
R_c[w]:=\begin{pmatrix}
   - \vartheta a w- \lambda^2g\ov\rho' w  \\[1ex]
  -\big( \lambda^2 g\ov\rho(0) + \sigma\vartheta \big)w(0)
\end{pmatrix}.
\]
It is clear that $R_c$ is a compact operator. Furthermore, $R_I$ is an isomorphism.
Hence,  $R_{\lambda,\vartheta} $ is a Fredholm operator of index zero.

We now set $z_i:= a^3\wt w_i' - g\ov\rho \wt w_i$, $i\in\{1,\,2\}$, where $\wt w_1$ and $\wt w_2$ denote the unknowns in \eqref{SYS1} and \eqref{SYS2}, respectively.
Recalling the definition of $A$ in \eqref{AAA}, the equations  $\eqref{SYS1}_1$ and $\eqref{SYS2}_1$   can be recast as   a first order system of linear ODEs with continuous coefficients:
 \begin{equation}\label{SYSi}
\left\{
\begin{array}{lll}
\wt w_i':= A' \wt w_i+a^{-3}z_i,\\[1ex]
z_i' =(\lambda^{-2}\vartheta a -g \ov\rho A')\wt w_i-A' z_i 
\end{array}
\right.\qquad \text{for $i\in\{1,\,2\},$}
\end{equation}
and the classical theory, cf., e.g., \cite[Proposition 7.8]{Am90}, ensures that each of the problems $\eqref{SYS1}$ and $\eqref{SYS2}$ has
 a unique solution $\wt w_i\in W^2_r((p_0,0))$, $i\in\{1,\,2\}$.
Moreover, given $w_a,\, w_b\in W^2_{r}((p_0,0))$  solutions to $\eqref{BN}_1$, it follows that
\begin{equation}\label{Linde}
 a^3(w_a'w_b-w_aw_b')=C\qquad\text{in $[p_0,0]$}
\end{equation}
for some $C\in\R$.
Hence    $w_a$ and $w_b$ are colinear  if they also belong to $W^2_{r,0}((p_0,0))$.
This proves  in particular that   ${\rm dim} \ker R_{\lambda,\vartheta}\leq 1$.
It remains to establishing the last claim. Let  ${\rm dim} \ker R_{\lambda,\vartheta}= 1$ and choose $0\neq w\in  \ker R_{\lambda,\vartheta}$. Relation \eqref{Linde}
implies that $w$ and $\wt w_1$ are colinear. Invoking $\eqref{SLop}_2$ and $\eqref{SYS2}_2$, \eqref{Linde} shows that also $w $ and $\wt w_2$ are colinear.
Finally, if $\wt w_1$ and $\wt w_2$ are colinear, it is easy to see that they both belong to $ \ker R_{\lambda,\vartheta}.$
\end{proof}
  
We can now reformulate our task as the problem of determining $\lambda_*>0$ such that the Wronskian
\[
W(\,\cdot\,;\lambda,\vartheta):=\left|
\begin{array}{ccc}
\wt w_1&\wt w_2\\[1ex]
\wt w_1'&\wt w_2'
\end{array}
\right|
\] 
vanishes in $[p_0,0]$ only for $\vartheta=(2\pi)^2$.  
Recalling \eqref{Linde}, it follows that $W(\cdot;\lambda,\vartheta)$
vanishes in $[p_0,0]$ if and only if it vanishes at $p=0$. For this reason we consider the function $W(0;\cdot,\cdot):(0,\infty)\times\R\to\R$ defined by
\begin{align}
	\label{EQ:W0LT}
\begin{aligned}
W(0;\lambda,\vartheta)&= \wt w_1(0)\wt w_2'(0)-\wt w_1'(0)\wt w_2(0)=(\lambda^2 g\ov\rho(0) + \sigma\vartheta)\wt w_1(0)-\lambda^2a^3(0)\wt w_1'(0)\\[1ex]
&=  \sigma\vartheta \wt w_1(0)-\lambda^2z_1(0),
\end{aligned}
\end{align}
where $z_1=a^3\wt w_1'-g\ov\rho \wt w_1$ is the new variable   introduced in \eqref{SYSi}.
  By  \cite[Proposition 9.5]{Am90} it holds that 
  \begin{align}\label{Reg:W}
   W(0;\cdot,\cdot)\in{\rm C}^\infty((0,\infty)\times\R).
  \end{align}
 
 We next prove that for each $\lambda>0 $ there exists at least one solution   $\vartheta$ to $W(0;\lambda,\vartheta)=0$.
As a  first step we  show that 
\begin{align}\label{theta0}
W(0;\lambda,0)=-\lambda^2z_1(0)<0.
\end{align}
This is a direct consequence of the following more general statement.
 
 \begin{lemma}\label{L:the0}
 Assume that \eqref{C:the0} is satisfied. 
 If $\vartheta=0$ and $\lambda>0$, then $z_1>0$, $\wt w_1'>0$ in $[p_0,0]$ and $\wt w_1>0$ in $(p_0,0]$.
 \end{lemma}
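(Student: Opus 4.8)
The plan is to observe that the positivity of $z_0$ established in the proof of Lemma \ref{L:w0} never used the terminal condition $w_0(0)=0$ — only the initial data and \eqref{C:the0} entered that step — so the very same argument applies to $\wt w_1$, with the initial value $z_1(p_0)$ playing the role of the constant $\alpha$. Concretely, I would first rewrite the ODE in \eqref{SYS1} (with $\vartheta=0$) as the first-order system \eqref{SYSi} (with $i=1$, $\vartheta=0$), namely
\begin{align*}
\wt w_1' = A'\wt w_1 + a^{-3}z_1,\qquad z_1' = -g\ov\rho A'\wt w_1 - A'z_1,
\end{align*}
where $z_1 := a^3\wt w_1' - g\ov\rho\wt w_1$ and $A' = g\ov\rho/a^3>0$ (since $\lambda>0$ the factor $\lambda^2$ divides out, so this system — and hence the whole conclusion — is in fact independent of $\lambda$). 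The initial conditions $\wt w_1(p_0)=0$, $\wt w_1'(p_0)=1$ give $z_1(p_0) = a^3(p_0)=:\alpha>0$.

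Next I would prove $z_1>0$ on $[p_0,0]$ by copying the contradiction argument of Lemma \ref{L:w0}. Suppose $z_1$ has a first zero at some $p_1\in(p_0,0]$. The identity $\wt w_1(p)e^{-A(p)} = \int_{p_0}^p (z_1 e^{-A}/a^3)(s)\,ds$ (the analogue of \eqref{L:efe}, coming from the first relation in \eqref{SYSi} and $\wt w_1(p_0)=0$) forces $\wt w_1>0$ on $(p_0,p_1]$; the analogue of \eqref{EXC} then shows $z_1 e^A$ is decreasing on $[p_0,p_1]$, so $0\le z_1(p)<\alpha e^{-A(p)}$ there. Inserting this bound into the analogue of \eqref{EXC} evaluated at $p=p_1$, where $z_1(p_1)=0$, and using $\ov\rho(s)\le\ov\rho(r)$ for $r\le s$ (cf. \eqref{RES1}) together with the definition \eqref{AAA} of $A$, I obtain
\begin{align*}
1 < \int_{p_0}^{p_1}A'(s)e^{2A(s)}\int_{p_0}^s A'(r)e^{-2A(r)}\,dr\,ds = \frac12\int_{p_0}^{p_1}A'(s)\big(e^{2A(s)}-1\big)\,ds \le \frac14\big(e^{2A(0)}-2A(0)-1\big)\le 1,
\end{align*}
where the penultimate inequality enlarges the integration interval from $p_1$ to $0$ (legitimate since the integrand is nonnegative, as $A(s)\ge A(p_0)=0$) and the final inequality is exactly \eqref{C:the0}. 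This contradiction yields $z_1>0$ throughout $[p_0,0]$.

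Finally, with $z_1>0$ in hand, the identity $\wt w_1(p)e^{-A(p)} = \int_{p_0}^p (z_1 e^{-A}/a^3)(s)\,ds$ immediately gives $\wt w_1>0$ on $(p_0,0]$, and then $\wt w_1' = A'\wt w_1 + a^{-3}z_1>0$ on $(p_0,0]$ as a sum of positive terms, while $\wt w_1'(p_0)=1>0$; hence $\wt w_1'>0$ on all of $[p_0,0]$. I do not anticipate a genuine obstacle: the estimates in Lemma \ref{L:w0} were already arranged so as not to use the boundary value at $p=0$, and the only thing to check is that replacing the abstract constant $\alpha$ by $a^3(p_0)$ changes nothing — which it does not, since that argument divides through by $\alpha$ before invoking \eqref{C:the0}.
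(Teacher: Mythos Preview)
Your proposal is correct and follows essentially the same approach as the paper: the paper's proof simply says ``arguing as in the proof of Lemma~\ref{L:w0} it follows that $z_1>0$ in $[p_0,0]$'' and then derives $\wt w_1>0$ and $\wt w_1'>0$ from the integral representation \eqref{EQ:W1}, which is exactly what you have spelled out in detail. Your observation that the terminal condition $w_0(0)=0$ played no role in the positivity step of Lemma~\ref{L:w0} is precisely the point, and your choice $\alpha=a^3(p_0)$ is the only adjustment needed.
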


 \begin{proof}
Arguing  as in the proof of Lemma \ref{L:w0} it follows that $z_1>0$ in $[p_0,0] $.
The remaining claims are direct consequences of  the latter property. Indeed, the relations  
\[
\wt w_1' - A' \wt w_1=\frac{z_1}{a^3} \quad \text{in $[p_0,0]$},\qquad \wt w_1(p_0)=0,
\]
imply
\begin{align}
\label{EQ:W1}
\wt w_1(p)=\int_{p_0}^p \frac{z_1(s)}{a^3(s)}e^{A(p)-A(s)}\, ds,  \qquad p\in[p_0,0].
\end{align}
As $z_1$ is positive, we conclude that  $\wt w_1>0$ in $(p_0,0]$ and $\wt w_1'>0$ in $[p_0,0]$.
\end{proof}

In view of  \eqref{Reg:W}, for the existence of a  solution $\vartheta$ to $W(0;\lambda,\vartheta)=0$
 it thus suffices to prove that $W(0;\lambda,\vartheta)\to\infty$ for $\vartheta\to\infty$.
We first show that if $\vartheta/\lambda^2$ is sufficiently large, then $\wt w_1'$ is a positive function.
This property is not obvious because of the fact that  $\ov\rho'$ has not only the opposed sign in $\eqref{BN}_1$  but it can also be unbounded.
However, when considering the equivalent formulation~\eqref{SYSi}, this feature follows quite naturally.

\begin{lemma}\label{L:inf1} Let $\lambda>0$ and  assume that
\begin{align}\label{Chopin}
\frac{\vartheta}{\lambda^2}\geq\frac{g^2\ov\rho^2(p_0)}{\underset{[p_0,0]}\min a^4}.
\end{align}
Then $z_1e^A$ and $\wt w_1$ are  increasing functions and it holds that
\begin{align}
	\label{ZPOS}
	z_1(p) \ge a^3(p_0)e^{-A(p)} > 0, \qquad \text{for all $p\in[p_0,0]$.}
\end{align}  
\end{lemma}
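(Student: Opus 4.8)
The plan is to work with the equivalent first–order system \eqref{SYSi} for $\wt w_1$ and $z_1$ with $i=1$, which reads
\begin{align*}
\wt w_1' = A' \wt w_1 + a^{-3}z_1, \qquad z_1' = (\lambda^{-2}\vartheta a - g\ov\rho A')\wt w_1 - A' z_1,
\end{align*}
together with the initial data $\wt w_1(p_0)=0$ and $z_1(p_0)=a^3(p_0)>0$. The quantity that behaves well is $z_1 e^A$: from the second equation, $(z_1 e^A)' = e^A\big((\lambda^{-2}\vartheta a - g\ov\rho A')\wt w_1\big)$, so monotonicity of $z_1 e^A$ on a subinterval reduces to the sign of the coefficient $\lambda^{-2}\vartheta a - g\ov\rho A'$ there, combined with the sign of $\wt w_1$.

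First I would argue that $\wt w_1>0$ and $\wt w_1'>0$ on a maximal right-neighbourhood of $p_0$ where $z_1>0$; this is immediate from $\wt w_1' - A'\wt w_1 = z_1/a^3$ and $\wt w_1(p_0)=0$, which gives the integral representation $\wt w_1(p) = \int_{p_0}^p a^{-3}(s) z_1(s) e^{A(p)-A(s)}\,ds$ exactly as in \eqref{EQ:W1}. So as long as $z_1$ stays positive, $\wt w_1$ is positive on $(p_0,0]$ and increasing. The heart of the matter is therefore to show $z_1$ never vanishes on $[p_0,0]$. Suppose for contradiction there is a first zero $p_1\in(p_0,0]$ with $z_1>0$ on $[p_0,p_1)$ and $z_1(p_1)=0$. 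On $[p_0,p_1]$ we then have $\wt w_1\ge 0$. Now I would estimate the coefficient $\lambda^{-2}\vartheta a - g\ov\rho A'$: recalling $A' = g\ov\rho/a^3$, this coefficient equals $a\big(\lambda^{-2}\vartheta - g^2\ov\rho^2/a^4\big)$. Using $a>0$, the monotonicity $\ov\rho\le\ov\rho(p_0)$ together with $\ov\rho\ge\rho_0>0$ (so $\ov\rho^2\le\ov\rho^2(p_0)$), and $a^4\ge\min_{[p_0,0]}a^4$, the hypothesis \eqref{Chopin} forces $\lambda^{-2}\vartheta - g^2\ov\rho^2/a^4\ge 0$ on all of $[p_0,0]$. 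Hence on $[p_0,p_1]$ we get $(z_1 e^A)' = e^A a\big(\lambda^{-2}\vartheta - g^2\ov\rho^2/a^4\big)\wt w_1 \ge 0$, i.e. $z_1 e^A$ is nondecreasing there, so $z_1(p_1)e^{A(p_1)}\ge z_1(p_0)e^{A(p_0)} = a^3(p_0) > 0$, contradicting $z_1(p_1)=0$. Therefore $z_1>0$ on the whole interval, and in fact the same computation with $p_1$ replaced by an arbitrary $p$ gives $z_1(p)e^{A(p)}\ge a^3(p_0)$, which is precisely \eqref{ZPOS}.

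With $z_1>0$ established globally on $[p_0,0]$, the representation \eqref{EQ:W1} (or its analogue) yields $\wt w_1>0$ on $(p_0,0]$, and since $(z_1 e^A)'\ge 0$ we conclude $z_1 e^A$ is increasing; finally $\wt w_1' = A'\wt w_1 + a^{-3}z_1 > 0$ on $[p_0,0]$, and from $\wt w_1(p)=\int_{p_0}^p a^{-3}z_1 e^{A(p)-A(s)}\,ds$ one reads off that $\wt w_1$ is increasing as well (the positive integrand and the growing prefactor $e^{A(p)}$ both help, using $A'\ge 0$).

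The main obstacle is making sure the sign of the coefficient $\lambda^{-2}\vartheta - g^2\ov\rho^2/a^4$ is controlled uniformly despite $a^3 = 1/H'^3$ not being explicitly known and $\ov\rho'$ possibly being unbounded; but this is exactly what assumption \eqref{Chopin} is tailored to provide, and once one passes to the $(\wt w_1, z_1)$ variables the potentially singular term $g\ov\rho' w$ has disappeared from the relevant computation (it is hidden inside $z_1$), so no delicate integrability argument is needed — only the pointwise bound on $a$ and the monotonicity of $\ov\rho$. It is worth double-checking that \eqref{Chopin} is used with $\ov\rho^2(p_0) = \max_{[p_0,0]}\ov\rho^2$, which holds because $\ov\rho$ is nonincreasing and positive, so that the bound $g^2\ov\rho^2/a^4 \le g^2\ov\rho^2(p_0)/\min_{[p_0,0]}a^4 \le \vartheta/\lambda^2$ is valid at every point.
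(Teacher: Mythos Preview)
Your proof is correct and follows essentially the same approach as the paper: both exploit that \eqref{Chopin} forces the coefficient $\lambda^{-2}\vartheta a - g\ov\rho A' = a(\lambda^{-2}\vartheta - g^2\ov\rho^2/a^4)\ge 0$, which makes $z_1 e^A$ nondecreasing and hence bounded below by $a^3(p_0)$. The paper phrases this via the integral representation $z_1(p)=a^3(p_0)e^{-A(p)}+\int_{p_0}^p(\cdot)\,ds$ (your relation \eqref{EF}) rather than your contradiction argument on a hypothetical first zero of $z_1$, but this is only a cosmetic difference.
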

\begin{proof} Integrating \eqref{SYSi} (with $i=1$) yields
\begin{align*}
\wt w_1(p)=\int_{p_0}^p \frac{z_1(s)}{a^3(s)}e^{A(p)-A(s)}\, ds
\end{align*}
and 
  \begin{align}
  \label{EF}
  z_1(p)=a^3(p_0)e^{-A(p)}+\int_{p_0}^p\Big(\frac{\vartheta a(s)}{\lambda^2} -g\ov\rho(s)A'(s)\Big)e^{A(s)-A(p)}
  \int_{p_0}^s \frac{z_1(r)}{a^3(r)}e^{A(s)-A(r)}\, dr\, ds
  \end{align}    
  for all $p\in[p_0,0].$ 
 Hence, the assertions follow due to  \eqref{Chopin}.
\end{proof}

Combining \eqref{EQ:W0LT} and \eqref{EF} yields
\begin{equation}\label{FSSS}
\begin{aligned}
	W(0;\lambda,\vartheta)&=  \sigma\vartheta \int_{p_0}^0 \frac{z_1(s)}{a^3(s)}e^{A(0)-A(s)}\, ds\\[1ex]
	&\hspace{0,424cm}-\lambda^2\Big[a^3(p_0)e^{-A(0)}+\int_{p_0}^0\Big(\frac{\vartheta a(s)}{\lambda^2} -g\ov\rho(s)A'(s)\Big) 
	  \int_{p_0}^s \frac{z_1(r)}{a^3(r)}\frac{e^{2A(s)}}{e^{A(0)+A(r)} }\, dr\, ds\Big].
\end{aligned}
\end{equation}

\medskip

\begin{lemma}
	\label{LEM:LIMW}
 Let $\lambda >$0  be given and assume that condition \eqref{C:the0} is satisfied.
It then holds:
	\begin{align}
	\label{LIM:W}
		W(0;\lambda,\vartheta) \to \infty \quad\text{as}\quad \vartheta \to \infty.
	\end{align}
\end{lemma}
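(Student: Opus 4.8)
The plan is to show that the dominant contribution to $W(0;\lambda,\vartheta)$ as $\vartheta\to\infty$ comes from the term $\sigma\vartheta\int_{p_0}^0 z_1(s)a^{-3}(s)e^{A(0)-A(s)}\,ds = \sigma\vartheta\,\wt w_1(0)$, which is positive and grows at least linearly in $\vartheta$ once $\vartheta/\lambda^2$ is large (Lemma~\ref{L:inf1}), and that the remaining terms in \eqref{FSSS} are controlled by this term. First I would assume, without loss of generality, that $\vartheta/\lambda^2 \ge g^2\ov\rho^2(p_0)/\min_{[p_0,0]}a^4$ so that Lemma~\ref{L:inf1} applies; in particular $z_1 \ge a^3(p_0)e^{-A}>0$ throughout $[p_0,0]$, and $\wt w_1$ is increasing and positive on $(p_0,0]$. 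This immediately gives the lower bound
\begin{align*}
\sigma\vartheta\int_{p_0}^0 \frac{z_1(s)}{a^3(s)}e^{A(0)-A(s)}\,ds \ge \sigma\vartheta\,\frac{a^3(p_0)}{\|a^3\|_{\infty}}\int_{p_0}^0 e^{A(0)-2A(s)}\,ds =: c_0\,\sigma\vartheta,
\end{align*}
with $c_0>0$ independent of $\vartheta$.

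Next I would estimate the bracketed term that is multiplied by $\lambda^2$ in \eqref{FSSS}. The piece $\lambda^2 a^3(p_0)e^{-A(0)}$ is a fixed constant, so it is harmless. The genuinely $\vartheta$-dependent piece is
\begin{align*}
\vartheta\int_{p_0}^0 a(s)\Big(\int_{p_0}^s \frac{z_1(r)}{a^3(r)}\frac{e^{2A(s)}}{e^{A(0)+A(r)}}\,dr\Big)\,ds,
\end{align*}
and the key observation is that the inner integral equals $e^{A(s)-A(0)}\wt w_1(s) \le e^{A(s)-A(0)}\wt w_1(0)$, since $\wt w_1$ is increasing by Lemma~\ref{L:inf1} and $A$ is increasing (as $A'=g\ov\rho/a^3>0$, so $A(s)\le A(0)$). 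Hence this $\vartheta$-term is bounded above by $\vartheta\,\wt w_1(0)\int_{p_0}^0 a(s)e^{A(s)-A(0)}\,ds \le C_1\,\vartheta\,\wt w_1(0)$ for a constant $C_1$ depending only on $a$ and $\ov\rho$ — crucially \emph{not} on $\sigma$. Similarly the term involving $-g\ov\rho(s)A'(s)$ is bounded by $C_2\,\wt w_1(0)$ uniformly in $\vartheta$. The subtle point that makes this work is that the upper bound for the ``bad'' $\vartheta$-term carries the same factor $\wt w_1(0)$ as the ``good'' term, but — and this is where I must be careful — the good term has a fixed positive coefficient $c_0\sigma$ while the bad term has coefficient $\lambda^2 C_1$; since $\sigma>0$ and $\lambda$ are \emph{fixed}, I instead control the bad term by using that $\wt w_1(0)$ itself blows up.

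So the main obstacle, and the step that needs the most care, is to show $\wt w_1(0)\to\infty$ as $\vartheta\to\infty$ (for fixed $\lambda$); once that holds, writing $W(0;\lambda,\vartheta) \ge c_0\sigma\vartheta\,\wt w_1(0) - \lambda^2(C_1\vartheta + C_2)\wt w_1(0) - \lambda^2 a^3(p_0)e^{-A(0)}$ is not yet conclusive because $C_1$ could exceed $c_0\sigma/\lambda^2$. The cleaner route is to go back to \eqref{EF}: iterating the positivity, for $\vartheta/\lambda^2$ large one has $z_1(p)\ge a^3(p_0)e^{-A(p)} + (\vartheta/\lambda^2)\int_{p_0}^p a(s)e^{A(s)-A(p)}\wt w_1(s)\,ds$, and feeding this back into $\wt w_1(p)=\int_{p_0}^p z_1(s)a^{-3}(s)e^{A(p)-A(s)}\,ds$ produces a Gronwall-type lower bound of the form $\wt w_1(0)\ge c\,e^{c'\sqrt{\vartheta}}$ for suitable positive constants — i.e. $\wt w_1(0)$ grows exponentially in $\sqrt{\vartheta}$, hence dominates any polynomial in $\vartheta$. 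Then in \eqref{FSSS} the good term $\ge c_0\sigma\vartheta\,\wt w_1(0)$ beats the bad terms $\le \lambda^2(C_1\vartheta+C_2)\wt w_1(0) + \lambda^2 a^3(p_0)e^{-A(0)}$ precisely because, after dividing by $\wt w_1(0)$, what remains is $c_0\sigma\vartheta - \lambda^2 C_1\vartheta - \lambda^2 C_2 - \lambda^2 a^3(p_0)e^{-A(0)}/\wt w_1(0)$; this still requires $c_0\sigma > \lambda^2 C_1$. Since that inequality need not hold, the correct and simplest argument is: bound the subtracted bracket in \eqref{FSSS} directly by $\lambda^2 C_1\vartheta\,\wt w_1(0) + (\text{lower order})$ and bound the leading term \emph{below} by rewriting $\int_{p_0}^0 z_1 a^{-3} e^{A(0)-A(s)}ds$ — note this is exactly $e^{A(0)}\wt w_1(0)e^{-A(0)}=\wt w_1(0)$ up to the weight, so in fact $\sigma\vartheta\int_{p_0}^0 z_1 a^{-3}e^{A(0)-A(s)}ds = \sigma\vartheta e^{2A(0)}\wt w_1(0)\cdot(\text{something})$; pinning down that the good term is $\sigma\vartheta\,\wt w_1(0)$ times a constant strictly larger than the constant multiplying the bad $\vartheta\,\wt w_1(0)$ term is the heart of the matter, and I expect it follows from comparing the weights $e^{A(0)-A(s)}$ against $a(s)e^{A(s)-A(0)}$ pointwise together with condition \eqref{C:the0}, which bounds $A(0)$ and hence keeps all these exponential weights comparable. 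With that comparison in hand, $W(0;\lambda,\vartheta)\ge (c_0\sigma - \lambda^2 C_1)\vartheta\,\wt w_1(0) - C_3 \to \infty$, completing the proof.
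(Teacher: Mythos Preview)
Your proposal correctly identifies the core difficulty but does not resolve it. You observe that both the ``good'' term $\sigma\vartheta\,\wt w_1(0)$ and the ``bad'' term coming from the bracket in \eqref{FSSS} are of order $\vartheta\,\wt w_1(0)$, with coefficients $c_0\sigma$ and $\lambda^2 C_1$ respectively, and there is no reason to have $c_0\sigma>\lambda^2 C_1$. Your attempted fixes do not work: showing $\wt w_1(0)\to\infty$ (even exponentially in $\sqrt{\vartheta}$) is irrelevant because \emph{both} competing terms carry the same factor $\wt w_1(0)$, so dividing through leaves you exactly where you started. And condition \eqref{C:the0} only bounds $A(0)$, which controls the exponential weights $e^{\pm A}$ uniformly; it says nothing about the ratio $c_0\sigma/(\lambda^2 C_1)$, which depends on $\sigma$, $\lambda$, and $a$ in a way the hypothesis does not constrain.

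The paper's argument avoids this comparison of constants altogether. Starting from \eqref{FSSS} it bounds
\[
W(0;\lambda,\vartheta)\;\ge\;C_1\vartheta\Big(\int_{p_0}^0 z_1(s)\,ds+C_2\int_{p_0}^0 s\,z_1(s)\,ds\Big)-\text{const},
\]
so the task reduces to showing $\liminf_{\vartheta\to\infty}\int_{p_0}^0(1+C_2 s)z_1(s)\,ds>0$. The point is not that $z_1$ is large, but that as $\vartheta\to\infty$ the mass of $z_1$ \emph{concentrates near $p=0$}, where the weight $1+C_2 s$ is positive. Concretely, the paper sets $Z(p)=\int_{p_1}^p z_1 e^{A}\,ds$ on a short interval $[p_1,0]$ with $p_1$ chosen so that $1+C_2 s\ge\tfrac12$ there, derives the differential inequality $Z''\ge C_3(\vartheta/\lambda^2)Z$, and compares with the explicit solution $U(p)=z_1(p_1)e^{A(p_1)}\mu^{-1}\sinh(\mu(p-p_1))$, $\mu=\sqrt{C_3\vartheta/\lambda^2}$. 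This gives $\int_{p_1}^0 z_1\,ds\gtrsim z_1(p_1)\mu^{-1}\sinh(\mu|p_1|)$, which dominates the contribution $\lesssim |p_0|\,z_1(p_1)e^{A(p_1)}$ from $[p_0,p_1]$ (where the weight may be negative) because $z_1 e^{A}$ is increasing. The exponential-growth idea you mention is thus used, but to compare $\int_{p_1}^0 z_1$ against $\int_{p_0}^{p_1} z_1$, not to compare $\sigma$ against $\lambda^2$.
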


\begin{proof}
In the following, we assume that $\vartheta $ is large enough to ensure   \eqref{Chopin}.
Recall that, by Lemma \ref{L:inf1}, the function $z_1$ is positive and $z_1e^A$ is increasing on $[p_0, 0].$
Using \eqref{FSSS}, we obtain the estimate
\begin{equation}\label{FSSS1}
	\begin{aligned}
	W(0;\lambda,\vartheta)&\ge  C_1\vartheta\Big( \int_{p_0}^0  z_1(s) \, ds+ C_2 \int_{p_0}^0 s z_1(s) \, ds\Big)-\lambda^2 a^3(p_0)e^{-A(0)}
	\end{aligned}
\end{equation}
with positive constants $C_1$ and $C_2$ independent of $\vartheta$.
Therefore it suffices to show that
 \begin{equation}\label{FSSS2}
\liminf_{\vartheta\to\infty} \int_{p_0}^0  (1+ C_2   s) z_1(s) \, ds>0.
\end{equation}
Let $p_1\in [p_0,0]$ be an arbitrary number that is specified later. 
For $\vartheta$ sufficiently large   it holds that
\begin{align*}
\big(z_1(p)e^{A(p)}\big)'  
&=  \Big( \frac{\vartheta a(p)}{\lambda^2} -  g \ov\rho(p)A'(p) \Big)
	\int_{p_0}^p \frac{z_1(s)e^{A(s)}}{a^3(s)} e^{2(A(p)-A(s))} \, ds 
\ge C_3 \frac{\vartheta}{\lambda^2} \int_{p_1}^p z_1(s)e^{A(s)} \, ds
\end{align*}
for all $p\in[p_1,0]$ with a positive constant $C_3$  independent of $\vartheta$.
 We define
\begin{align*}
	Z(p):=\int_{p_1}^p z_1(s)e^{A(s)}\, ds		\quad \text{for all $p\in[p_1,0]$}
\end{align*}
and observe that
\begin{align*}
	Z'' \ge C_3\frac{\vartheta}{\lambda^2} Z \quad\text{on $[p_1,0]$}, \qquad Z(p_1) = 0, \qquad Z'(p_1) = z_1(p_1) e^{A(p_1)}.
\end{align*}
Now, let $U$ denote the solution of the initial value problem
\begin{align*}
	U'' = C_3\frac{\vartheta}{\lambda^2} U \quad\text{on $[p_1,0]$}, \qquad U(p_1) = 0, \qquad U'(p_1) = z_1(p_1) e^{A(p_1)}.
\end{align*}
Then, $U$ is explicitly given by
\begin{align*}
	U(p) = \frac{z_1(p_1) e^{A(p_1)}}{\mu} \sinh(\mu(p-p_1))\quad\text{with}\quad \mu := \sqrt{C_3\frac{\vartheta}{\lambda^2}}
\end{align*}
for all $p\in[p_1,0]$. 
It is straightforward to check that $Z\ge U$ on $[p_1,0]$. 
As $z_1$ is positive and $A$ is increasing, we can conclude that
\begin{align}
\label{EST:U}
	\int_{p_1}^0 z_1(s)\, ds 
	\ge e^{-A(0)} Z(0) 
	\ge e^{-A(0)} U(0) .
\end{align}
Now, we fix
\begin{align*}
	p_1:=\max\Big\{-\frac{1}{2C_2},\frac{p_0}{2} \Big\}. 
\end{align*}
Then, since $z_1$ is positive and $(z_1 e^A)$ is increasing, we use \eqref{EST:U} and the definition of $p_1$ to derive the estimate
\begin{align*}
	\int_{p_0}^0  (1+ C_2   s) z_1(s) \, ds&=\int_{p_0}^{p_1}  (1+ C_2   s) z_1(s) \, ds+\int_{p_1}^{0}  (1+ C_2   s) z_1(s) \, ds\\[1ex]
	&\geq \frac{1}{2}\int_{p_1}^{0}   z_1(s) \, ds -C_4\int_{p_0}^{p_1}   e^{A(s)} z_1(s) \, ds\\[1ex]
	&\geq \frac{1}{2}\Big(e^{-A(0)}U(0)  -C_4|p_0| e^{A(p_1)}   z_1(p_1)\Big) \\[1ex]
	&= \frac{z_1(p_1)}{2} 
		\Bigg(  \frac{e^{A(p_1)-A(0)}}{ \mu} \sinh(\mu |p_1|)
		- C_4|p_0| e^{A(p_1)}  \Bigg)
\end{align*}
where $C_4$ denotes a nonnegative constant that is independent of $\vartheta$. 
Hence, recalling \eqref{ZPOS}, we observe that the right-hand side tends to infinity as $\vartheta\to\infty$. In particular, the assertion \eqref{FSSS2} directly follows.
\end{proof} 

\medskip

The relations \eqref{Reg:W}, \eqref{theta0}, and  \eqref{LIM:W}  ensure that the equation $W(0;\lambda,\vartheta)=0$ has at least one solution $\vartheta>0$ for any fixed $\lambda>0$. 
The next result provides a remarkable identity, cf. \eqref{LEM:DW}, that will enable us later to identify the largest solution $\vartheta(\lambda)$
 to the above equation in a quite explicit way.

\begin{lemma}\label{L:38}
 	Assume that  \eqref{RES2} and  \eqref{RES3} hold and that $(\lambda,\vartheta)\in (0,\infty)^2$ satisfies $W(0;\lambda,\vartheta)=0$. 
 	Then, it holds $\wt w_1(0)W_\lambda(0;\lambda,\vartheta)< 0$ and
 	\begin{align}
 	\label{LEM:DW}
 		\cfrac{W_\vartheta(0;\lambda,\vartheta)}{W_\lambda(0;\lambda,\vartheta)} = -\frac{\lambda }{2 \vartheta }< 0.
 	\end{align}
\end{lemma}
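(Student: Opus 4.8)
The plan is to reduce the claim to a weighted Poincar\'e inequality. The structural point is that the solution $\wt w_1$ of $\eqref{SYS1}$ --- equivalently of $\eqref{BN}_1$ with $\wt w_1(p_0)=0$, $\wt w_1'(p_0)=1$ --- depends on $(\lambda,\vartheta)$ only through the ratio $\vartheta/\lambda^2$, since dividing $\eqref{BN}_1$ by $\lambda^2$ gives $(a^3\wt w_1')'-g\ov\rho'\wt w_1-(\vartheta/\lambda^2)a\wt w_1=0$, whose coefficients and initial data involve $\lambda,\vartheta$ only through $\vartheta/\lambda^2$. Hence the same is true of $z_1=a^3\wt w_1'-g\ov\rho\wt w_1$, and by $\eqref{EQ:W0LT}$ the function $W(0;\cdot,\cdot)$ satisfies the weighted homogeneity $W(0;t\lambda,t^2\vartheta)=t^2W(0;\lambda,\vartheta)$ for all $t>0$. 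Differentiating this in $t$ at $t=1$ yields Euler's relation $\lambda W_\lambda(0;\lambda,\vartheta)+2\vartheta W_\vartheta(0;\lambda,\vartheta)=2W(0;\lambda,\vartheta)$, so on the zero set $W_\lambda(0;\lambda,\vartheta)=-(2\vartheta/\lambda)\,W_\vartheta(0;\lambda,\vartheta)$. Thus, once we show that $\wt w_1(0)\,W_\vartheta(0;\lambda,\vartheta)>0$ at the point in question, it follows in particular that $W_\vartheta(0;\lambda,\vartheta)\neq0$, whence $W_\vartheta/W_\lambda=-\lambda/(2\vartheta)$, which is $\eqref{LEM:DW}$, and also $\wt w_1(0)\,W_\lambda(0;\lambda,\vartheta)=-(2\vartheta/\lambda)\,\wt w_1(0)\,W_\vartheta(0;\lambda,\vartheta)<0$. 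So everything reduces to proving $\wt w_1(0)\,W_\vartheta(0;\lambda,\vartheta)>0$ whenever $W(0;\lambda,\vartheta)=0$.

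To identify $\wt w_1(0)W_\vartheta$ I would set $v:=\partial_\vartheta\wt w_1$; differentiating $\eqref{BN}_1$ and its data in $\vartheta$ gives $v(p_0)=v'(p_0)=0$ together with $\lambda^2(a^3v')'-\lambda^2g\ov\rho'v-\vartheta av=a\wt w_1$. The Lagrange identity then reads
\[
\frac{d}{dp}\Big[\lambda^2a^3\big(\wt w_1'v-v'\wt w_1\big)\Big]=-a\,\wt w_1^2,
\]
and integrating over $[p_0,0]$ with the vanishing data yields $\lambda^2a^3(0)\big(\wt w_1'(0)v(0)-v'(0)\wt w_1(0)\big)=-\int_{p_0}^0a\,\wt w_1^2\,dp$. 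Differentiating $\eqref{EQ:W0LT}$ in $\vartheta$, multiplying by $\wt w_1(0)$ and inserting this relation, the terms containing $v$ combine into $v(0)\,W(0;\lambda,\vartheta)$, so on the zero set
\[
\wt w_1(0)\,W_\vartheta(0;\lambda,\vartheta)=\sigma\,\wt w_1(0)^2-\int_{p_0}^0a\,\wt w_1^2\,dp
\]
(and in particular $\wt w_1(0)\neq0$ there). To see that the right-hand side is positive I would multiply $\eqref{BN}_1$ by $\wt w_1$, integrate, use $\wt w_1(p_0)=0$ and the boundary relation $\eqref{BN}_2$ (which is valid precisely because $W=0$), integrate the $\ov\rho'$-term by parts once more, and complete the square with $z_1=a^3\wt w_1'-g\ov\rho\wt w_1$; this gives the energy identity
\[
\sigma\,\wt w_1(0)^2-\int_{p_0}^0a\,\wt w_1^2\,dp=\frac{\lambda^2}{\vartheta}\int_{p_0}^0\frac{z_1^2-g^2\ov\rho^2\wt w_1^2}{a^3}\,dp.
\]

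Finally, writing $\xi:=\wt w_1e^{-A}$ with $A$ as in $\eqref{AAA}$ (so that $\xi(p_0)=0$, $z_1=a^3e^A\xi'$ and $A'=g\ov\rho/a^3$), the integrand becomes $a^3e^{2A}\big((\xi')^2-(A')^2\xi^2\big)$, and the needed positivity is the weighted Poincar\'e inequality $\int_{p_0}^0a^3e^{2A}(A')^2\xi^2\,dp<\int_{p_0}^0a^3e^{2A}(\xi')^2\,dp$ for $\xi\not\equiv0$. Applying the Cauchy--Schwarz inequality to $\xi(p)=\int_{p_0}^p\xi'$ with the weight $a^3e^{2A}$, then using that $\ov\rho$ is non-increasing (whence $a^3(p)(A'(p))^2/a^3(r)\le A'(p)A'(r)$ for $r\le p$) and the primitive $\int_{p_0}^pA'e^{-2A}=\tfrac12\big(1-e^{-2A}\big)$, the estimate reduces to $\tfrac14\big(e^{2A(0)}-2A(0)-1\big)\le1$, which is exactly $\eqref{C:the0}$; and $\eqref{C:the0}$ holds here because $\eqref{RES2}$ and $\eqref{RES3}$ are assumed (Example $\ref{Bei:2}$). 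A routine inspection of the equality cases forces $\xi\equiv0$, hence $\wt w_1\equiv0$, contradicting $\wt w_1'(p_0)=1$; so the inequality is strict, and the lemma follows.

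\textbf{Main obstacle.} The homogeneity argument and the Lagrange/energy identities are bookkeeping; the real work is the weighted Poincar\'e estimate --- in particular recognising that it is sharp exactly at the threshold $e^{2A(0)}-2A(0)=5$, so that $\eqref{C:the0}$ (equivalently $\eqref{RES3}$) is precisely the hypothesis required, and obtaining the \emph{strict} inequality needed to ensure $W_\vartheta\neq0$.
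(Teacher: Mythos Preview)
Your proof is correct and, in its overall organisation, genuinely different from the paper's. The paper computes $\wt w_1(0)W_\lambda$ explicitly by differentiating \eqref{SYS1} in $\lambda$ and applying a Lagrange identity, obtaining the closed form \eqref{EQ:WL0}; it then proves the sign of \eqref{EQ:WL0} directly via a one-line Cauchy--Schwarz bound that uses the intermediate inequality $g\ov\rho(p_0)\int_{p_0}^0 a^{-3}\,dp\le x_*/2<1$ from \eqref{kl*} (hence \eqref{RES3}, not merely \eqref{C:the0}). Only afterwards does the paper compute $\wt w_1(0)W_\vartheta$ and link the two via the energy identity, arriving at \eqref{WOW}. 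You reverse the logic: your homogeneity observation $W(0;t\lambda,t^2\vartheta)=t^2W(0;\lambda,\vartheta)$ --- which the paper does not record --- gives Euler's relation and hence \eqref{LEM:DW} immediately on the zero set, so only the sign of $\wt w_1(0)W_\vartheta$ remains. For that sign you rewrite the energy identity in terms of $z_1$ and $\xi=\wt w_1 e^{-A}$ and reduce to a weighted Poincar\'e inequality whose constant is exactly $\tfrac14(e^{2A(0)}-2A(0)-1)$, so that \eqref{C:the0} alone suffices; the equality analysis you sketch does force $\xi\equiv0$, so strictness holds even at the threshold. What each approach buys: your homogeneity shortcut avoids the $\partial_\lambda$-computation entirely and makes the ratio \eqref{LEM:DW} structural rather than computational; on the other hand, the paper's sign argument is shorter and more elementary (a single Cauchy--Schwarz against $\|\wt w_1\|_\infty$), at the price of invoking the slightly sharper bound hidden in \eqref{kl*} rather than \eqref{C:the0} itself.
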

\begin{proof}  
	Let $\wt w_1 = \wt w_1(\,\cdot\,;\lambda,\vartheta)$ denote the solution of \eqref{SYS1} corresponding  to $\lambda$ and $\vartheta$. 
We first consider the derivative $W_\lambda(0;\lambda,\vartheta).$ 
Using the algebra property of $W^1_r((p_0,0))$ we conclude that the partial derivative 
$\wt w_{1,\lambda} = \partial_\lambda \wt w_1(\,\cdot\,,\lambda,\vartheta)$ belongs to $W^2_r((p_0,0))$ and solves the problem
	\begin{align}
	\label{SYS:W1L}
	\left\{
	\begin{aligned}
		&\lambda^2 ( a^3\wt w_{1,\lambda}')' 
			- \lambda^2 g\ov\rho' \wt w_{1,\lambda}  
			- \vartheta a \wt w_{1,\lambda} 
			= -2\lambda   (a^3\wt w_{1}' )'
			+2\lambda g\ov\rho' \wt w_{1} \quad\text{in $L_r((p_0,0)),$}\\[1ex]
		&\wt w_{1,\lambda}(p_0)=0,\quad \wt w_{1,\lambda}'(p_0)=0.	
	\end{aligned}
	\right.
	\end{align}
	 We multiply $\eqref{SYS1}_1$ by $\wt w_{1,\lambda}$ and $\eqref{SYS:W1L}_1$ by $\wt w_1$ and subtract the resulting equations. This yields
	\begin{align*}
		\lambda^2 (a^3 \wt w_1')' \wt w_{1,\lambda} - \lambda^2 (a^3 \wt w_{1,\lambda}')' \wt w_{1} 
			= 2\lambda (a^3\wt w_1')' \wt w_1 - 2\lambda g\ov\rho' \wt w_1^2 \qquad \text{in $L_r((p_0,0))$}.
	\end{align*}
	Integrating with respect to $p$ from $p_0$ to $0$ and using integration by parts then gives
	\begin{align}
	\label{EQ:SUB}
		&\lambda^2 a^3(0) \wt w_1'(0) \wt w_{1,\lambda}(0) 
			- \lambda^2 a^3(0) \wt w_{1,\lambda}'(0) \wt w_{1}(0) \notag\\
		&\qquad = 2\lambda a^3(0)\wt w_1'(0) \wt w_1(0) 
			- 2\lambda \int_{p_0}^0\big[ a^3(p)\,  (\wt w_1')^2(p) + g\ov\rho'(p)\, \wt w_1^2(p) \big]\, dp.
	\end{align}
	Recall that $W(0;\lambda,\vartheta) = (\lambda^2g\ov\rho(0) + \sigma\vartheta)\wt w_1(0) - \lambda^2a^3(0)\wt w_1'(0)$ according to \eqref{EQ:W0LT}. 
	Hence, the derivative with respect to $\lambda$ is given by
	\begin{align*}
		W_\lambda (0;\lambda,\vartheta) = 2\lambda g\ov\rho(0) \wt w_1(0) 
			+ (\lambda^2 g\ov\rho(0) +\sigma\vartheta) \wt w_{1,\lambda}(0)
			- 2\lambda a^3(0) \wt w_1'(0) - \lambda^2 a^3(0) \wt w_{1,\lambda}'(0).
	\end{align*}
	We point out that, since $W(0;\lambda,\vartheta) = 0$, the functions $\wt w_1$ and $\wt w_2$ are linearly dependent and not identically zero.
	This implies that hence $\wt w_1(0)\neq0.$
	Multiplying the latter identity by $\wt w_1(0)$  and using \eqref{EQ:SUB}  and  the colinearity of $\wt w_1$ and $\wt w_2$  we then obtain
	\begin{align}
	\label{EQ:WL0}
		\wt w_1(0) W_\lambda (0;\lambda,\vartheta)
			&= 2\lambda \left( g\ov\rho(0) \wt w_1^2(0) 
				- \int_{p_0}^0 a^3(p)\, \big(\wt w_1'(p)\big)^2 \,dp  
				- \int_{p_0}^0 g\ov\rho'(p)\, \wt w_1^2(p) \, dp \right).
	\end{align} 
In view of
	\[
	- \int_{p_0}^0 g\ov\rho'(p)\, \wt w_1^2(p) \, dp\leq g(\ov\rho(p_0)-\ov\rho(0))\|\wt w_1\|_{{\rm C}([p_0,0])}^2
	\]
	we conclude that 
	\begin{align*}
		\wt w_1(0) W_\lambda (0;\lambda,\vartheta)
			&\leq 2\lambda \left( g\ov\rho(0) \big[\wt w_1^2(0)- \|\wt w_1\|_{{\rm C}([p_0,0])}^2\big]+g\ov\rho(p_0) \|\wt w_1\|_{{\rm C}([p_0,0])}^2
				- \int_{p_0}^0 a^3(p)\, \big(\wt w_1'(p)\big)^2 \,dp  
				 \right)\\[1ex]
			 &\leq 2\lambda \left( g\ov\rho(p_0) \|\wt w_1\|_{{\rm C}([p_0,0])}^2 
				- \int_{p_0}^0 a^3(p)\, \big(\wt w_1'(p)\big)^2 \,dp  \right).
	\end{align*}	
	Finally, choosing $p_1\in(p_0,0]$  such that $|\wt w_1(p_1)|=\|\wt w_1\|_{{\rm C}([p_0,0])}$, together with  \eqref{kl*} (recall that the positive solution $x_*$ to \eqref{exx} satisfies $x_*<2$) we get
	\begin{align*}
		 g\ov\rho(p_0)\|\wt w_1\|_{{\rm C}([p_0,0])}^2 &=  g\ov\rho(p_0)\Big(\int_{p_0}^{p_1}\wt w_1'(p)\, dp \Big)^2\leq g\ov\rho(p_0)\Big(\int_{p_0}^0\frac{1}{a^3(p)}\, dp\Big)
		 \Big(\int_{p_0}^0a^3(p)(\wt w_1'(p))^2\, dp\Big)\\[1ex]
		 &<\int_{p_0}^0a^3(p)(\wt w_1'(p))^2\, dp,
	\end{align*}
	and this proves that $\wt w_1(0) W_\lambda(0;\lambda,\vartheta)<0$. 
	
	We next consider  $W_\vartheta(0;\lambda,\vartheta)$. 
	The partial derivative $\wt w_{1,\vartheta} = \partial_\vartheta \wt w_1(\,\cdot\,,\lambda,\vartheta)$ belongs to $W^2_r((p_0,0))$ and solves the problem
	\begin{align}
	\label{SYS:W1T}
	\left\{
	\begin{aligned}
	&\lambda^2 ( a^3\wt w_{1,\vartheta}')' 
	- \lambda^2 g\ov\rho' \wt w_{1,\vartheta}  
	- \vartheta a \wt w_{1,\vartheta} 
	= a \wt w_1
	 \quad\text{in $L_r((p_0,0)),$}\\[1ex]
	&\wt w_{1,\vartheta}(p_0)=0,\quad \wt w_{1,\vartheta}'(p_0)=0.	
	\end{aligned}
	\right.
	\end{align}
	Multiplying $\eqref{SYS1}_1$ by $\wt w_{1,\vartheta}$ and $\eqref{SYS:W1T}_1$ by $\wt w_{1}$ and subtracting the resulting equations gives
	\begin{align*}
		\lambda^2 (a^3 \wt w_1')' \wt w_{1,\vartheta} - \lambda^2 (a^3 \wt w_{1,\vartheta}')' \wt w_{1} 
		= - a \wt w_1^2\qquad \text{in $L_r((p_0,0))$}.
	\end{align*}
	Integrating with respect to $p$ from $p_0$ to $0$ and using integration by parts, we infer that
	\begin{align}
	\label{EQ:SUB2}
		\lambda^2 a^3(0) \wt w_1'(0) \wt w_{1,\vartheta}(0) 
			- \lambda^2 a^3(0) \wt w_{1,\vartheta}'(0) \wt w_{1}(0)
			= - \int_{p_0}^0 a(p)\,  \wt w_1^2(p)\,  dp.
	\end{align}
	The partial derivative $W_\vartheta(0;\lambda,\vartheta)$ is given by
	\begin{align*}
		W_\vartheta(0;\lambda,\vartheta) = \sigma \wt w_1(0) + (\lambda^2g\ov\rho(0) + \sigma\vartheta) \wt w_{1,\vartheta}(0) - \lambda^2 a^3(0) \wt w_{1,\vartheta}'(0).
	\end{align*}
	Multiplying this equation by $\wt w_1(0)$ and using \eqref{EQ:SUB2} and the colinearity of  $\wt w_1$ and $\wt w_2$  we conclude that  
	\begin{align*}
		\wt w_1(0)W_\vartheta(0;\lambda,\vartheta) =  \sigma \wt w_1^2(0) - \int_{p_0}^0 a(p)\,  \wt w_1^2(p) \, dp .
	\end{align*}
	Next, we multiply $\eqref{SYS1}_1$ by $\wt w_1 $, integrate from $p_0$ to $0$, and use once more the fact that $\wt w_1$ and $\wt w_2$ are linearly dependent to obtain 
	\begin{align*}
		\sigma \wt w_1^2(0) - \int_{p_0}^0 a(p)\,  \wt w_1^2(p) \, dp 
			= -\frac{\lambda^2}{\vartheta} \left( g \ov\rho(0) \wt w_1^2(0)
			-  \int_{p_0}^0 a^3(p) (\wt w_1'(p))^2 \,dp
			- \int_{p_0}^0 g\ov\rho'(p) \wt w_1^2(p) \,dp.
			 \right)
	\end{align*}
	Hence, \eqref{EQ:WL0} implies   that
	\begin{align}\label{WOW}
		W_\vartheta(0;\lambda,\vartheta) = -\frac{\lambda }{2 \vartheta } W_\lambda(0;\lambda,\vartheta),
	\end{align}
	which completes the proof.
\end{proof}

Given $\lambda>0$, let $\vartheta(\lambda)$ denote the largest solution to   $W(0;\lambda,\vartheta)=0. $
In the following lemma  we identify, using Lemma \ref{L:38}, the mapping  
 \begin{align}\label{varla}
[\lambda\mapsto\vartheta(\lambda)]:(0,\infty)\to(0,\infty) 
 \end{align}
up to a positive multiplicative constant.
 
 \begin{lemma}\label{L:39} 
 There exists a constant $C_D>0$ such that   $\vartheta(\lambda)=C_D\lambda^2, \, \lambda>0.$
 \end{lemma}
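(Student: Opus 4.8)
The plan is to exploit the scaling identity \eqref{LEM:DW} from Lemma~\ref{L:38} to show that $\log\vartheta(\lambda)$ depends on $\log\lambda$ affinely with slope $2$. First I would establish that the mapping in \eqref{varla} is well-defined and smooth: for each fixed $\lambda>0$ the set of solutions $\vartheta>0$ to $W(0;\lambda,\vartheta)=0$ is nonempty (by \eqref{Reg:W}, \eqref{theta0}, and \eqref{LIM:W}), and it is bounded above because $W(0;\lambda,\vartheta)\to\infty$ as $\vartheta\to\infty$ while $W(0;\lambda,\cdot)$ is continuous; since $W(0;\lambda,\cdot)$ is continuous the largest zero $\vartheta(\lambda)$ is attained. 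By Lemma~\ref{L:38}, at any zero $(\lambda,\vartheta)$ of $W(0;\cdot,\cdot)$ with $\vartheta>0$ one has $\wt w_1(0)W_\lambda(0;\lambda,\vartheta)<0$, so in particular $W_\lambda(0;\lambda,\vartheta)\neq 0$; the implicit function theorem (applicable thanks to \eqref{Reg:W}) then shows that near such a point the zero set of $W(0;\cdot,\cdot)$ is a smooth curve $\lambda\mapsto\vartheta$ with
\[
\vartheta'(\lambda) = -\frac{W_\lambda(0;\lambda,\vartheta(\lambda))}{W_\vartheta(0;\lambda,\vartheta(\lambda))}.
\]
A short argument is needed to see that it is precisely the \emph{largest} zero that is tracked by this local curve and that $\vartheta(\cdot)$ is therefore smooth on all of $(0,\infty)$: if two branches of zeros crossed or the largest zero jumped, continuity of $W(0;\lambda,\cdot)$ together with $W(0;\lambda,0)<0$ and $W(0;\lambda,\vartheta)\to+\infty$ forces the largest zero to vary continuously in $\lambda$, and local smoothness at each point upgrades this to global smoothness.

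Next I would substitute \eqref{LEM:DW} into the ODE for $\vartheta(\lambda)$. Lemma~\ref{L:38} gives $W_\vartheta/W_\lambda = -\lambda/(2\vartheta)$ at every positive zero, hence
\[
\vartheta'(\lambda) = -\frac{W_\lambda}{W_\vartheta} = \frac{2\vartheta(\lambda)}{\lambda}.
\]
This is a separable linear ODE; integrating $\vartheta'/\vartheta = 2/\lambda$ on $(0,\infty)$ yields $\log\vartheta(\lambda) = 2\log\lambda + \mathrm{const}$, i.e. $\vartheta(\lambda) = C_D\lambda^2$ for some constant $C_D$. Since $\vartheta(\lambda)>0$ for all $\lambda$, the constant $C_D$ is positive, which is the claim.

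The main obstacle I anticipate is not the differential equation itself — that part is immediate once \eqref{LEM:DW} is in hand — but rather the bookkeeping needed to justify that $[\lambda\mapsto\vartheta(\lambda)]$ is a genuine $C^1$ (indeed smooth) function on all of $(0,\infty)$ rather than merely a locally defined branch, so that one is entitled to integrate the ODE over the whole half-line. Here the key inputs are $\wt w_1(0)W_\lambda(0;\lambda,\vartheta)<0$ from Lemma~\ref{L:38} (which rules out $W_\lambda=0$ at zeros and hence gives a nonvertical implicit-function curve) and the sign information $W(0;\lambda,0)<0$ together with $W(0;\lambda,\vartheta)\to\infty$ (which pins down the largest zero and prevents it from disappearing or merging with a lower one as $\lambda$ varies). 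Once continuity of $\lambda\mapsto\vartheta(\lambda)$ is secured, local smoothness from the implicit function theorem propagates globally, and the computation above finishes the proof.
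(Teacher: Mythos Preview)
Your approach is essentially the same as the paper's: both apply the implicit function theorem to the zero set of $W(0;\cdot,\cdot)$, derive the ODE $\vartheta'(\lambda)=2\vartheta(\lambda)/\lambda$ from the identity \eqref{LEM:DW}, and integrate. One small slip: from $W_\lambda\neq 0$ the implicit function theorem yields $\lambda$ as a function of $\vartheta$, not the other way around; to obtain a local graph $\lambda\mapsto\vartheta$ (and the formula $\vartheta'=-W_\lambda/W_\vartheta$) you need $W_\vartheta\neq 0$, which does follow immediately from \eqref{LEM:DW} once $W_\lambda\neq 0$ is known, but you should say so. The paper argues this point by combining Lemma~\ref{LEM:LIMW} with Lemma~\ref{L:38} to get $W_\vartheta(0;\lambda,\vartheta(\lambda))>0$ directly.
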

 \begin{proof} 
Lemma \ref{LEM:LIMW} and Lemma \ref{L:38} ensure  that $W_\vartheta(0;\lambda,\vartheta(\lambda))>0$ for all $\lambda>0$. The implicit function theorem applied at
  $(\lambda_0,\vartheta(\lambda_0))$, with $\lambda_0>0$, implies that in a small neighborhood of $(\lambda_0,\vartheta(\lambda_0))$ 
  the solution set to $W(0;\lambda,\vartheta)=0$ coincides with the graph of a smooth curve $\wt\vartheta:(\lambda_0-\e,\lambda_0+\e)\to\R$.
 Differentiating the relation $W(0;\lambda,\wt\vartheta(\lambda))=0$ it follows in virtue of \eqref{WOW} that
 \[
\wt\vartheta'(\lambda)=-\frac{W_\lambda(0;\lambda,\wt\vartheta(\lambda))}{W_\vartheta(0;\lambda,\wt\vartheta(\lambda))} =\frac{2 \wt\vartheta(\lambda) }{\lambda}.
 \] 
Hence there exists a constant $C_D>0$ such that $\wt\vartheta(\lambda)=C_D\lambda^2$ for all $\lambda\in (\lambda_0-\e,\lambda_0+\e).$
The desired claim follows now at once.
 \end{proof}

 \begin{rem}\label{R:CD} Our analysis shows, under the assumptions \eqref{RES2} and \eqref{RES3}, 
 that the constant $C_D$ found in Lemma \ref{L:39} is  the largest constant  such that the solutions to 
 \[
 ( a^3w' )' - g\ov\rho' w - C_D a w=0\qquad\text{in $L_r((p_0,0))$}
 \]
 determined by the initial data
 \[
(w,w')(p_0)=(0,1)\qquad\text{or} \qquad  (w,w')(0)=(a^3(0),g\ov\rho(0) + \sigma C_D),\, \text{respectively,}
 \]
 are linearly dependent.
 The constant  $C_D$ depends only on Earth's gravity $g$, the mass flux $p_0$, the water depth $d$, the surface tension coefficient $\sigma$, the density function $\ov\rho,$ and on Bernoulli's function~$\beta$.
 In the homogeneous case $\ov\rho=\rho\in\R$ we obtain, under the assumption that the flow is irrotational (that is for $\beta=0$),
 that $C_D$ is the largest positive solution to 
\begin{equation}\label{DR}
\tanh(d\sqrt{C_D} )=\frac{p_0^2\sqrt{C_D}}{d^2(g\rho+\sigma C_D)}.
\end{equation}

We cannot exclude the possibility that there exist finitely many (since $W(0;\lambda,\cdot):\R\to\R$ is real-analytic there cannot exist infinitely many) 
positive constants $$ C_{D,N}<C_{D,{N-1}}<\ldots C_{D,1}<C_D,\quad N\geq 1,$$ 
for which the two solutions defined above   are linearly dependent. 
 Each of these constants defines a new function $\vartheta_i:(0,\infty)\to\R$ with
 \[ 
 \vartheta_i(\lambda)=C_{D,i}\lambda^2, \quad 1\leq i\leq N,
 \]
   which  satisfies
 $W(0;\lambda,\vartheta_i(\lambda))=0 $ for all $\lambda>0$.
 This complicates the bifurcation analysis for \eqref{BP1} a lot as in this situation the dimension of $\ker\p_h\kF(\lambda,0)$ may   be larger than $1$ for certain $\lambda$.
 However, this behavior is expected since, even in the case of constant density,  phenomena like bifurcation  from double eigenvalues or secondary bifurcation 
 may occur when allowing for surface tension effects, cf. \cite{MJ89, Wa14a, Wal14b}. 
 
We now shortly discuss  the dispersion relation \eqref{DR} in the homogeneous irrotational case (that is for $\ov\rho=\rho\in\R$  and  $\beta=0$). 
Setting $\sqrt{C_D}=x$, the problem reduces to finding the positive zeros of the function ${f:\R\to\R}$ with
\begin{equation}\label{DRD}
f(x):=(g\rho+\sigma x^2)\frac{\tanh(dx)}{x}-\frac{p_0^2}{d^2}.
\end{equation}
Since $f(0)=g\rho d-p_0^2/d^2<0$ (this inequality is a direct consequence of \eqref{RES3}) and $f(x)\to\infty$ as $x\to \infty$, the equation \eqref{DRD} has at least a positive solution.
 Note that $f$ is even. Furthermore, direct
computations show that 
\[
  f''(0)=2d\Big(\sigma-\frac{g\rho d^2}{3}\Big) \qquad\text{and}\qquad f^{(4)}(0)=-8d^3\Big(\sigma-\frac{2g\rho d^2}{5}\Big).
\] 
Moreover, it can be shown that if $f'(x)=0$ for some $x>0$, then $f''(x)>0$ (see e.g., {\cite[Lemma~3]{CM13a}}).
Consequently, regardless of the sign of 
$$\sigma-\frac{g\rho d^2}{3}$$
the equation $f(x)=0$ has a unique solution. 
Indeed, if $\sigma > g\rho d^2/3$, then $f$ is strictly increasing on $(0,\infty)$. If $\sigma=g\rho d^2/3$ then $f$ is again strictly 
increasing on $(0,\infty)$ since then $f^{(4)}(0)>0$. Finally,
 if $\sigma< g\rho d^2/3$, then $f$ has a unique global minimizer let's say at $x_0$ and $f$ is strictly decreasing on $(0,x_0)$ and strictly increasing on $(x_0,\infty)$. 

 \end{rem}

 We proceed with the following result.
 \begin{prop}\label{Prop:1} Let 
 \begin{align}\label{lambda*}
  \lambda_*:=\frac{2\pi}{\sqrt{C_D}}.
 \end{align}
 Then, $\p_h\kF(\lambda_*,0)$ is a Fredholm operator of index zero and with a one-dimensional kernel spanned by
 \begin{align}\label{w*}
 w_*(q,p):=\wt w_1(p;\lambda_*, (2\pi)^2)\cos(2\pi q), \qquad (p,q)\in\0, 
 \end{align}
 where $\wt w_1(\cdot;\lambda_*, (2\pi)^2)$ denotes the solution to \eqref{SYS1} corresponding to $(\lambda,\vartheta)=(\lambda_*,(2\pi)^2).$
 \end{prop}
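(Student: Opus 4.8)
The plan is to combine Lemma~\ref{L:31} (which already gives Fredholmness of index zero for every $\lambda>0$, and in particular for $\lambda=\lambda_*$) with a careful analysis of the kernel via the Fourier decomposition set up before the statement. Since $\p_h\kF(\lambda_*,0)=(L,T)$ is linear and acts diagonally on the Fourier modes $w_k(p)\cos(2k\pi q)$ (together with the odd modes $w_k(p)\sin(2k\pi q)$, which however are excluded because $\X$ consists of functions even in $q$, and because the equations $L[w]=0$, $T[w]=0$ couple only to the cosine coefficients), it suffices to identify, for each $k\in\N$, the space of solutions to the reduced systems \eqref{BN0} (for $k=0$) and \eqref{BN} with $\vartheta=(2k\pi)^2$ (for $k\ge1$), evaluated at $\lambda=\lambda_*$.

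First I would dispose of the mode $k=0$: by Example~\ref{Bei:2}, the hypotheses \eqref{RES2} and \eqref{RES3} imply \eqref{C:the0}, so Lemma~\ref{L:w0} gives $w_0=0$. Next, for $k\ge1$ the reduced system is exactly \eqref{BN} with $\vartheta=(2k\pi)^2$, which by Lemma~\ref{LEM:KER} has a nontrivial solution if and only if the Wronskian $W(0;\lambda_*,(2k\pi)^2)$ vanishes, and in that case the solution space is one-dimensional and spanned by $\wt w_1(\cdot;\lambda_*,(2k\pi)^2)$. Now I invoke Lemma~\ref{L:39}: with $\lambda=\lambda_*=2\pi/\sqrt{C_D}$ one has $\vartheta(\lambda_*)=C_D\lambda_*^2=(2\pi)^2$, so $W(0;\lambda_*,(2\pi)^2)=0$, which produces the kernel element $w_*(q,p)=\wt w_1(p;\lambda_*,(2\pi)^2)\cos(2\pi q)$ for $k=1$. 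It remains to show that for every $k\ge2$ we have $W(0;\lambda_*,(2k\pi)^2)\ne0$. Here the key point is that $\vartheta(\lambda_*)=(2\pi)^2$ is, by definition, the \emph{largest} root of $W(0;\lambda_*,\cdot)$; since $(2k\pi)^2>(2\pi)^2$ for $k\ge2$, no such value can be a root, so \eqref{BN} has only the trivial solution for these modes. This is the step I expect to be the crux: one must be sure that $\vartheta(\lambda_*)$ being the largest root genuinely excludes all $(2k\pi)^2$ with $k\ge2$, which follows directly from Lemma~\ref{L:39} and the definition of $\vartheta(\lambda)$ preceding it, together with $\lambda_*$ being chosen precisely so that $(2\pi)^2$ is that largest root.

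Assembling these facts: $\ker\p_h\kF(\lambda_*,0)$ consists exactly of functions $\sum_{k\ge0} c_k w_k(p)\cos(2k\pi q)$ with $w_0=0$, $c_k w_k=0$ for $k\ge2$, and $w_1$ a multiple of $\wt w_1(\cdot;\lambda_*,(2\pi)^2)$; hence it is one-dimensional and spanned by $w_*$ as in \eqref{w*}. One should also note that $w_*\in\X$: indeed $\wt w_1(\cdot;\lambda_*,(2\pi)^2)\in W^2_r((p_0,0))\hookrightarrow{\rm C}^{1+\alpha}([p_0,0])$ with $\wt w_1(p_0)=0$ by \eqref{SYS1}, and $w_*$ is even in $q$ as a cosine, so indeed $w_*\in\X$ and $w_*$ satisfies the homogeneous Dirichlet condition on $p=p_0$. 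Combined with Lemma~\ref{L:31} this gives the full statement. A minor technical remark to include: to justify that the kernel is spanned by finitely determined Fourier modes one uses that a generic $w\in\X$ has an absolutely convergent Fourier cosine expansion in $q$ with coefficients $w_k\in{\rm C}^{1+\alpha}([p_0,0])$, and that $L[w]=0$, $T[w]=0$ in the sense of $\Y$ forces each $w_k$ to solve the corresponding reduced system in $L_r((p_0,0))$ — this is exactly the computation carried out in the lines preceding \eqref{BN0}.
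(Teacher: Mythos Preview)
Your proposal is correct and follows essentially the same route as the paper: the paper's proof simply cites Lemma~\ref{L:31}, Lemma~\ref{L:w0}, Lemma~\ref{LEM:KER}, Lemma~\ref{L:39}, and Remark~\ref{R:CD}, and you have accurately unpacked how these pieces fit together (Fredholmness from Lemma~\ref{L:31}; triviality of the $k=0$ mode via \eqref{C:the0} and Lemma~\ref{L:w0}; the Wronskian criterion from Lemma~\ref{LEM:KER}; and the exclusion of modes $k\ge2$ because $\vartheta(\lambda_*)=(2\pi)^2$ is by construction the \emph{largest} root of $W(0;\lambda_*,\cdot)$, cf.\ Lemma~\ref{L:39}). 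Your added verification that $w_*\in\X$ is a nice touch that the paper leaves implicit.
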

 \begin{proof}
 The proof follows from the results established in Lemma \ref{L:31}, Lemma \ref{L:w0}, Lemma~\ref{LEM:KER},   Lemma~\ref{L:39}, and Remark \ref{R:CD}.
 \end{proof}

\paragraph{\bf The transversality condition.} In order to apply  \cite[Theorem~1.7]{CR71} to the bifurcation problem \eqref{BP1}, we  still have to check  the transversality condition
 \begin{align}\label{TC}
 \p_{\lambda h}\kF(\lambda_*,0)[w_*]\not\in \im \p_{h}\kF(\lambda_*,0),
 \end{align}
  with $\lambda_*$ and $w_*$ introduced in \eqref{lambda*} and \eqref{w*}, respectively.
 To this end we first characterize the set $\im \p_{h}\kF(\lambda_*,0).$ 
 
 \begin{lemma}\label{L:40}
 A pair $(f,\phi)\in \Y$ with $f=\varphi+\p_q\psi_1+\p_p\psi_2$ belongs to $\im \p_{h}\kF(\lambda_*,0)$
 if and only if
 \begin{align}\label{IF0}
 \int_\0\big(\psi_1w_{*,q}+\psi_2w_{*,p}-\varphi w_*\big)\, d(q,p)- \int_0^1\tr_0(\psi_2w_{*})\,dq-\sigma(1+(2\pi)^2)\int_0^1\phi\tr_0w_{*}\,dq=0.
 \end{align}
  \end{lemma}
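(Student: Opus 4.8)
The plan is to recognise the left-hand side of \eqref{IF0} as the unique (up to scalar) bounded linear functional on $\Y$ whose kernel is the image of $\p_h\kF(\lambda_*,0)$. By Lemma~\ref{L:31} the operator $\p_h\kF(\lambda_*,0)=(L,T)\in\kL(\X,\Y)$ is Fredholm of index zero, and by Proposition~\ref{Prop:1} its kernel is one-dimensional, spanned by $w_*$; hence $\im\p_h\kF(\lambda_*,0)$ is a closed subspace of $\Y$ of codimension one. Let $\Lambda:\Y\to\R$ send $(f,\phi)$, with $f=\varphi+\p_q\psi_1+\p_p\psi_2\in\Y_1$, to the expression on the left of \eqref{IF0}. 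The whole argument then reduces to showing: (a) $\Lambda$ is a well-defined bounded linear functional; (b) $\Lambda$ vanishes on $\im\p_h\kF(\lambda_*,0)$; (c) $\Lambda\neq0$. Indeed, (a) and (c) make $\ker\Lambda$ a closed hyperplane, which by (b) contains the codimension-one subspace $\im\p_h\kF(\lambda_*,0)$, forcing equality.

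For step~(a) I first have to check that $\Lambda(f,\phi)$ does not depend on the chosen representation of $f$; equivalently, that the bracket $\int_\0(\psi_1w_{*,q}+\psi_2w_{*,p}-\varphi w_*)\,d(q,p)-\int_0^1\tr_0(\psi_2w_*)\,dq$ vanishes whenever $\varphi+\p_q\psi_1+\p_p\psi_2=0$ in $\mathcal{D}'(\0)$. The plan is to test this distributional identity against $w_*\chi_\e$, where $\chi_\e=\chi_\e(p)$ is a smooth cutoff equal to $1$ on $[p_0+\e,-\e]$ and vanishing near $p_0$ and near $0$, and then to let $\e\to0$. The contribution of $\chi_\e'$ near $p=p_0$ is negligible because $w_*$ vanishes on $\{p=p_0\}$ (recall $\wt w_1(p_0)=0$), while the contribution of $\chi_\e'$ near $p=0$ converges, with the correct sign, to $\int_0^1\tr_0(\psi_2w_*)\,dq$, so the desired identity emerges in the limit. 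Boundedness of $\Lambda$ is then immediate: every term is estimated by $\|\varphi\|_\infty$, $\|\psi_1\|_\alpha$, $\|\psi_2\|_\alpha$ and $\|\phi\|_{{\rm C}^{1+\alpha}}$, so taking the infimum over representations yields $|\Lambda(f,\phi)|\le C\|(f,\phi)\|_\Y$.

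Step~(b) is the computational heart. Given $h\in\X$, I plug into \eqref{IF0} the representation of $L[h]$ with $\psi_1=h_q/H'$, $\psi_2=\lambda_*^2(h_p/H'^3-g\ov\rho h)$, $\varphi=\lambda_*^2g\ov\rho h_p$ (which indeed lies in $\Y_1$, with the correct parities in $q$), and $\phi=T[h]$. Using the product rule \eqref{PR1} I integrate by parts twice, moving the $q$- and $p$-derivatives off $h$ and onto $w_*$, which is admissible since $\wt w_1\in W^2_r((p_0,0))$. The boundary term at $p=p_0$ drops out because $w_*$ and $h$ both vanish there; the boundary term at $p=0$ generated by the $\p_p\psi_2$-part cancels the term $-\int_0^1\tr_0(\psi_2w_*)\,dq$ already present in \eqref{IF0}; and, $L$ being formally self-adjoint, all interior contributions collapse into $\int_\0 L[w_*]\,h\,d(q,p)=0$. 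What remains is the pure boundary expression
\[
\Lambda(L[h],T[h])=-\lambda_*^2a^3(0)\int_0^1\big(\tr_0h_p\,\tr_0w_*-\tr_0w_{*,p}\,\tr_0h\big)\,dq-\sigma(1+(2\pi)^2)\int_0^1T[h]\,\tr_0w_*\,dq.
\]
I then expand the last integral using $\tr_0w_*=\wt w_1(0)\cos(2\pi q)$, the fact that $(1-\p_q^2)^{-1}$ multiplies the $\cos(2\pi q)$-mode by $(1+(2\pi)^2)^{-1}$, and the explicit form of $T$. The coefficients of the first cosine mode of $\tr_0h_p$ cancel identically, while the coefficient of the first cosine mode of $\tr_0h$ equals $\lambda_*^2a^3(0)\wt w_1'(0)-\big(\sigma(2\pi)^2+\lambda_*^2g\ov\rho(0)\big)\wt w_1(0)=-W(0;\lambda_*,(2\pi)^2)$ by \eqref{EQ:W0LT}, and this vanishes because $\lambda_*=2\pi/\sqrt{C_D}$ was chosen precisely so that $W(0;\lambda_*,(2\pi)^2)=0$ (see \eqref{lambda*} and Proposition~\ref{Prop:1}). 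Hence $\Lambda(L[h],T[h])=0$ for every $h\in\X$.

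Finally, for step~(c), evaluating $\Lambda$ at $(0,\cos(2\pi q))\in\Y$ gives $-\tfrac12\sigma(1+(2\pi)^2)\wt w_1(0)$, which is nonzero: if $\wt w_1(0)$ vanished, then, $\wt w_1$ being colinear with $\wt w_2$ (because $W(0;\lambda_*,(2\pi)^2)=0$) and $\wt w_2(0)=\lambda_*^2a^3(0)\neq0$, $\wt w_1$ would be identically zero, contradicting $0\neq\wt w_1\in\ker R_{\lambda_*,(2\pi)^2}$ (this was already observed in the proof of Lemma~\ref{L:38}). Therefore $\ker\Lambda$ is a closed hyperplane containing the codimension-one subspace $\im\p_h\kF(\lambda_*,0)$, whence $\im\p_h\kF(\lambda_*,0)=\ker\Lambda$, which is exactly \eqref{IF0}. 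The main obstacle is the integration-by-parts bookkeeping in step~(b): keeping precise track of the boundary terms at $p=0$ and matching them with the nonlocal operator $(1-\p_q^2)^{-1}$ hidden inside $T$; this is where the constant $\sigma(1+(2\pi)^2)$ and the defining relation for $\lambda_*$ enter the identity.
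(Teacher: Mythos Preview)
Your proposal is correct and follows essentially the same strategy as the paper: establish that \eqref{IF0} defines a linear functional on $\Y$ that annihilates $\im\p_h\kF(\lambda_*,0)$, then invoke the Fredholm property from Lemma~\ref{L:31} and Proposition~\ref{Prop:1} to conclude that this hyperplane coincides with the image. Your steps (a) and (c) make explicit two points the paper treats tersely (well-definedness of the functional with respect to the representation of $f$, and its non-triviality); your cutoff $\chi_\e$ plays the same role as the paper's piecewise linear $\xi_m$. For step (b) the paper organises the computation slightly differently: rather than integrating by parts twice to move derivatives onto $w_*$, it tests $L[w]=f$ against $\xi_m w_*$ and separately tests $L[w_*]=0$ against $\xi_m w$, exploiting the symmetry of the resulting bilinear form to see directly that the bulk contribution vanishes and leaving only the boundary relation $\eqref{BN}_2$ for $w_*$ to finish. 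This avoids having to expand $T[h]$ in cosine modes and match coefficients against $W(0;\lambda_*,(2\pi)^2)$, but the underlying identity is the same.
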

  \begin{proof}
  Let $(f,\phi)\in  \im \p_{h}\kF(\lambda_*,0)$ and let $w\in \X$ satisfy $(L,T)[w]=(f,\phi).$
  Testing the equation $L[w]=f$ with $\xi_m w_*\in H^1_0(\0)$, where $w_*$ is defined  in \eqref{w*} and with $\xi_m$ defined by
  \[
\xi_m(p):=\left\{
\begin{array}{clll}
m(p-p_0),& p_0\leq p\leq p_0+1/m,\\[1ex]
1,&   p_0+1/m\leq p\leq -1/m,\\[1ex]
-mp,&  -1/m\leq p\leq0,
\end{array}
\right.  \qquad p\in[p_0,0], \, \frac{2}{|p_0|}\leq m\in\N,
  \]
 we obtain, after passing to the limit $m\to\infty$, the following identity 
\begin{equation}\label{IF1}
 \begin{aligned}
 &\int_\0\Big(\frac{w_qw_{*,q}}{H'}+\lambda^2 \frac{w_pw_{*,p}}{H'^3}+\lambda^2g\ov\rho' ww_*\Big)\, d(q,p)
 -\lambda^2\int_0^1\tr_0 \frac{w_pw_{*}}{H'^3} \,dq\\[1ex]
 &\hspace{0.5cm}=\int_\0\big(\psi_1w_{*,q}+\psi_2w_{*,p}-\varphi w_*\big)\, d(q,p)- \int_0^1\tr_0(\psi_2w_{*})\,dq.
 \end{aligned}
 \end{equation}
Moreover, multiplying the relation $T[w]=\phi$ by $\tr_0w_*$ and integrating over $[0,1],$ it follows that
\begin{equation}\label{IF2}
   \int_0^1\tr_0[(\sigma(2\pi)^2+\lambda^2g\ov\rho)ww_*]\, dq-\lambda^2\int_0^1\tr_0 \frac{w_pw_{*}}{H'^3}\,dq =\sigma(1+(2\pi)^2)\int_0^1\phi\tr_0w_{*}\,dq.
 \end{equation}
Combining the  relations \eqref{IF1} and \eqref{IF2} yields 
 \begin{equation}\label{IF3} 
 \begin{aligned}
 &\int_\0\Big(\frac{w_qw_{*,q}}{H'}+\lambda^2 \frac{w_pw_{*,p}}{H'^3}+\lambda^2g\ov\rho' ww_*\Big)\, d(q,p)-\int_0^1\tr_0[(\sigma(2\pi)^2+\lambda^2g\ov\rho)ww_*]\, dq\\[1ex]
 &\hspace{0.5cm}=\int_\0\big(\psi_1w_{*,q}+\psi_2w_{*,p}-\varphi w_*\big)\, d(q,p)- \int_0^1\tr_0(\psi_2w_{*})\,dq-\sigma(1+(2\pi)^2)\int_0^1\phi\tr_0w_{*}\,dq.
 \end{aligned}
 \end{equation}
 Finally, multiplying  $ L [w_*]=0$ by $w$ and integrating by parts, we find in virtue of 
\[
\tr_0w_{*,p}=\tr_0\Big[(\sigma(2\pi)^2+\lambda^2g\ov\rho)\frac{H'^3}{\lambda^2}w_*\Big],
\] 
 cf. \eqref{w*}, that the left-hand side of \eqref{IF3} is zero and \eqref{IF0} follows.
 Observing that \eqref{IF0} defines a closed subspace of $\Y$ of codimension $1$ which   contains $\im \p_{h}\kF(\lambda_*,0)$, Proposition \ref{Prop:1} leads us to the desired conclusion.
  \end{proof}
  
  We are now in a position to prove that the transversality condition \eqref{TC} holds.
  
  \begin{prop}\label{Prop:2}
  Let $\lambda_*$ and $w_*$ be as defined in Proposition \ref{Prop:1}.
   It then holds 
   \begin{align*} 
 \p_{\lambda h}\kF(\lambda_*,0)[w_*]\not\in \im \p_{h}\kF(\lambda_*,0).
 \end{align*}
  \end{prop}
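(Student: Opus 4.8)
The plan is to compute $(f,\phi):=\p_{\lambda h}\kF(\lambda_*,0)[w_*]\in\Y_1\times\Y_2$ explicitly, to exhibit an admissible decomposition of $f$, and then to use the solvability criterion of Lemma \ref{L:40} to check that $(f,\phi)$ violates \eqref{IF0}. The decisive observation is that $w_*$ lies in $\ker\p_h\kF(\lambda_*,0)$, so that $L[w_*]=0$ and $T[w_*]=0$ for the operators $L,T$ associated with $\lambda=\lambda_*$. Since the $\lambda$-dependence of $\p_h\kF(\lambda,0)$ enters only through an overall factor $\lambda^2$ in the nonlocal and lower order terms, differentiating the formulas for $L$ and $T$ in $\lambda$ at $\lambda_*$ and inserting $L[w_*]=T[w_*]=0$ gives the compact expressions
\begin{align*}
f&=-\frac{2}{\lambda_*}\Big(\frac{w_{*,q}}{H'}\Big)_q,\\
\phi&=-\frac{2}{\lambda_*}\big(\tr_0 w_*-(1-\p_q^2)^{-1}\tr_0 w_*\big)=-\frac{2(2\pi)^2}{\lambda_*\big(1+(2\pi)^2\big)}\,\tr_0 w_*,
\end{align*}
where in the last equality I used $\tr_0 w_*=\wt w_1(0)\cos(2\pi q)$ and $(1-\p_q^2)^{-1}\cos(2\pi q)=\big(1+(2\pi)^2\big)^{-1}\cos(2\pi q)$. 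For $f$ I would then record the one-term decomposition $f=\p_q\psi_1$ with $\psi_1:=-\tfrac{2}{\lambda_*}w_{*,q}/H'$ and $\psi_2=\varphi=0$ (in the notation of Lemma \ref{L:40}). Since $w_{*,q}=-2\pi\wt w_1(p)\sin(2\pi q)$ is odd in $q$ and $1/H'=a$ is independent of $q$, the function $\psi_1$ is odd in $q$; and since $\wt w_1\in W^2_r((p_0,0))\hookrightarrow{\rm C}^{1+\alpha}([p_0,0])$ and $a\in W^1_r((p_0,0))\hookrightarrow{\rm C}^{\alpha}([p_0,0])$, the algebra property \eqref{PR2} yields $\psi_1\in{\rm C}^{\alpha}(\ov\0)$, while $\phi\in\Y_2$. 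Hence $(f,\phi)\in\Y$ and this is an admissible decomposition for Lemma \ref{L:40}.

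Next I would evaluate the left-hand side of \eqref{IF0} on $(f,\phi)$. Using $w_*=\wt w_1(p)\cos(2\pi q)$, the elementary identities $\int_0^1\sin^2(2\pi q)\,dq=\int_0^1\cos^2(2\pi q)\,dq=\tfrac12$, and $a=1/H'$, one computes
\begin{align*}
&\int_\0\psi_1 w_{*,q}\,d(q,p)-\sigma\big(1+(2\pi)^2\big)\int_0^1\phi\,\tr_0 w_*\,dq\\
&\qquad=-\frac{2}{\lambda_*}\int_\0\frac{w_{*,q}^2}{H'}\,d(q,p)+\frac{2\sigma(2\pi)^2}{\lambda_*}\int_0^1(\tr_0 w_*)^2\,dq\\
&\qquad=\frac{(2\pi)^2}{\lambda_*}\Big(\sigma\,\wt w_1^2(0)-\int_{p_0}^0 a\,\wt w_1^2\,dp\Big),
\end{align*}
the terms involving $\psi_2$ and $\varphi$ being absent because $\psi_2=\varphi=0$.

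Finally I would invoke the identity established in the proof of Lemma \ref{L:38} (obtained by multiplying $\eqref{SYS1}_1$ by $\wt w_1$, integrating over $[p_0,0]$, and using the colinearity of $\wt w_1$ and $\wt w_2$, valid because $W(0;\lambda_*,(2\pi)^2)=0$ by the very definition of $\lambda_*$, cf. \eqref{lambda*} and Lemma \ref{L:39}), namely $\sigma\,\wt w_1^2(0)-\int_{p_0}^0 a\,\wt w_1^2\,dp=\wt w_1(0)\,W_\vartheta(0;\lambda_*,(2\pi)^2)$. Combining this with the remarkable relation \eqref{LEM:DW} shows that the left-hand side of \eqref{IF0} equals $-\tfrac12\,\wt w_1(0)\,W_\lambda(0;\lambda_*,(2\pi)^2)$, which is strictly positive by Lemma \ref{L:38}. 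In particular it is nonzero, so Lemma \ref{L:40} yields $\p_{\lambda h}\kF(\lambda_*,0)[w_*]\notin\im\p_h\kF(\lambda_*,0)$. The only point requiring care is the bookkeeping in the first step: one must recognize that the a priori merely distributional object $f\in\Y_1$ admits the single-term representation $\p_q\psi_1$ with $\psi_1\in{\rm C}^{\alpha}(\ov\0)$ odd in $q$ (which rests entirely on $w_*$ being in the kernel), and then identify the resulting scalar with $\wt w_1(0)\,W_\vartheta(0;\lambda_*,(2\pi)^2)$, whose sign has already been pinned down during the proofs of Lemmas \ref{L:38} and \ref{L:39}; after that the conclusion is immediate.
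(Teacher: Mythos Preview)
Your proof is correct. Both you and the paper reduce to Lemma~\ref{L:40} and ultimately to the sign information $\wt w_1(0)W_\lambda(0;\lambda_*,(2\pi)^2)<0$ from Lemma~\ref{L:38}, so the strategies coincide at the structural level. The difference lies in the choice of decomposition of $f=\p_\lambda L[w_*]$: the paper reads off directly from the formula for $\p_\lambda L$ the representation $f=2\lambda_*(\varphi+\p_p\psi_2)$ with $\varphi=g\ov\rho w_{*,p}$ and $\psi_2=w_{*,p}/H'^3-g\ov\rho w_*$, and then evaluates the functional of Lemma~\ref{L:40} to land immediately on the right-hand side of \eqref{EQ:WL0}; you instead exploit $L[w_*]=0$ and $T[w_*]=0$ to rewrite $(f,\phi)$ in the compact single-term form $f=\p_q\psi_1$, which produces the quantity $\sigma\wt w_1^2(0)-\int_{p_0}^0 a\wt w_1^2\,dp=\wt w_1(0)W_\vartheta$, and then you invoke the identity \eqref{LEM:DW} to pass to $W_\lambda$. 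Both computations yield the same value $-\tfrac12\,\wt w_1(0)W_\lambda(0;\lambda_*,(2\pi)^2)$ (once the paper's overall factor $2\lambda_*$ is reinstated), as they must since Lemma~\ref{L:40} defines a functional on~$\Y$ independent of the chosen decomposition. Your route has the minor advantage of a one-term decomposition with $\psi_2=\varphi=0$, at the cost of the extra step through $W_\vartheta$; the paper's route is slightly more direct in reaching~\eqref{EQ:WL0}.
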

  \begin{proof}
  We compute that 
  \begin{align*}
   \p_{\lambda h}\kF(\lambda_*,0)[w_*]=2\lambda_*(\varphi+ \p_p\psi ,\phi),
  \end{align*}
  where
  \[
  \varphi=g\ov\rho w_{*,p},\quad \psi =\frac{w_{*,p}}{H'^3}-g\ov\rho w_{*},
\quad \phi=-\frac{1}{\sigma(1+(2\pi)^2)}\tr_0\psi.
  \]
  Hence, according to Lemma \ref{L:40}, $\p_{\lambda h}\kF(\lambda_*,0)[w_*]\in \im \p_{h}\kF(\lambda_*,0)$ if and only if
  \begin{align*}
  \int_\0\big( \psi w_{*,p}-\varphi w_*\big)\, d(q,p)- \int_0^1\tr_0(\psi w_{*})\,dq-\sigma(1+(2\pi)^2)\int_0^1\phi\tr_0w_{*}\,dq=0.
  \end{align*}
The   left-hand side of this equation can be expressed as
  \begin{align*}
 & \int_\0\Big( \frac{w_{*,p}^2}{H'^3}-g\ov\rho (w_*^2)_p\Big) \,d(q,p)=\frac{1}{2}\int_{p_0}^0\Big( \frac{(\wt w_1')^2}{H'^3}-g\ov\rho (\wt w_1^2)'\Big) \,dp  
 =-\frac{\wt w_1(0)}{4\lambda_*}W_\lambda(0;\lambda_*,(2\pi)^2),
  \end{align*}
  which  is positive according to Lemma \ref{L:38}. 
  Thus, the assertion follows.
  \end{proof}

 \paragraph{\bf Improved regularity and the proof of Theorem \ref{MT1}.} With this preparation completed we may now apply the bifurcation result \cite[Theorem~1.7]{CR71} to~\eqref{BP1}.
  This provides us  with  a local branch of  weak solutions  to~\eqref{eq:Q'} and~\eqref{C4}  which contains, 
  apart from the laminar flow defined by $(\lambda_*,0)$ only nonlaminar solutions which belong to $ \X$. 
  We next prove that any weak solution $h\in {\rm C}^{1+\alpha}(\ov\0)$  to~\eqref{eq:Q'} and~\eqref{C4},  corresponding to some $\lambda>0$,  is in fact 
  a strong solution as defined in Theorem \eqref{T:Ep} (iii).

 To this end we need the following regularity result.
 \begin{thm}\label{T:RR}
 Let $\lambda>0$, $r\in(1,\infty)$, $\alpha=(r-1)/r$, $\beta\in L_r((p_0,0))$ and $\ov\rho\in W^1_{r}((p_0,0)).$ 
Given  a weak solution  $h\in {\rm C}^{1+\alpha}(\ov\0)$  to~\eqref{eq:Q'} and~\eqref{C4}, it holds that $\p_q^m h\in {\rm C}^{1+\alpha}(\ov\0)$ for all $m\in\N$ and there exists a  constant 
 $L>0$ such that 
 \begin{align}\label{L:AST}
 \|\p_q^m h\|_{{\rm C}^{1+\alpha}(\ov\0)}\leq L^{m-2}(m-3)!\qquad\text{for all $m\geq 3.$}
 \end{align}
 \end{thm}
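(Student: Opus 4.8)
The plan is to prove \eqref{L:AST} by induction on $m$, establishing along the way the qualitative statement $\partial_q^m h\in{\rm C}^{1+\alpha}(\ov\0)$. The structural observation that drives everything is that in the weak formulation of~\eqref{eq:Q'} (Definition~\ref{D:31}) the data not carried by $h$ itself---namely $\ov\rho$, $B$ and $\beta$---depend only on the normal variable $p$, so that $\partial_q$ commutes through the whole problem. First I would record that $h\in W^2_r(\0)$, which follows from $W^2_r$-regularity for $\eqref{eq:Q'}_1$ (continuous principal coefficients, $L_r$ right-hand side, and ${\rm C}^{2+\alpha}$ Dirichlet data on $p=0$ produced by the smoothing operator $(1-\partial_q^2)^{-1}$). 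Differentiating the divergence-form equation $m$ times then shows that $w_m:=\partial_q^m h$ solves a \emph{linear}, uniformly elliptic, divergence-form problem of the schematic form
\begin{align*}
	\dv\big(\M(\nabla h)\nabla w_m+F_m\big)+\lambda^2 g\ov\rho\,\partial_p w_m=f_m\ \ \text{in }\0,\qquad w_m=0\ \ \text{on }p=p_0,
\end{align*}
together, on $p=0$, with the ``Dirichlet condition perturbed at order $-1$'' obtained from $\partial_q^m\kF_2=0$. Here the principal part $\M(\nabla h)\in{\rm C}^\alpha(\ov\0)$ is \emph{the same for every $m$} and is uniformly elliptic by~\eqref{C4}; the data $F_m$ (vector-valued) and $f_m$ are built by the Leibniz and chain rules from $\partial_q^j h$, $j<m$, and the frozen function $\ov\rho\in W^1_r((p_0,0))\hookrightarrow{\rm C}^\alpha((p_0,0))$. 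Crucially $\beta$ no longer occurs, because $\lambda^2 B$ sits under $\partial_p$ and is annihilated by $\partial_q$; for $m=1$ the equation is in fact homogeneous, with $F_1$ and $f_1$ linear in $w_1$.

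For the qualitative step, assume $\partial_q^j h\in{\rm C}^{1+\alpha}(\ov\0)$ for $j<m$. Then every product making up $F_m$, $f_m$ and the lower-order part of the boundary data lies in ${\rm C}^\alpha(\ov\0)$, and the smoothing of $(1-\partial_q^2)^{-1}$ turns the boundary relation on $p=0$ into a Dirichlet condition with ${\rm C}^{2+\alpha}\subset{\rm C}^{1+\alpha}$ right-hand side, the top-order boundary contribution being absorbed exactly by this gain of two $q$-derivatives. Schauder theory for divergence-form elliptic equations with ${\rm C}^\alpha$ principal part, applied up to both boundary components, then yields $w_m\in{\rm C}^{1+\alpha}(\ov\0)$. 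One cannot---and need not---do better than ${\rm C}^{1+\alpha}$: the principal coefficient $\M(\nabla h)$ is only ${\rm C}^\alpha$ because $\nabla h$ is, and $h\notin{\rm C}^{2+\alpha}$ in general, $h_{pp}$ inheriting the $L_r$-regularity of $\beta$ through $\eqref{eq:Q'}_1$. The first differentiation ($m=1$) is legitimate once $h\in W^2_r(\0)$---or, if one prefers, via difference quotients in $q$---and from there on the scheme is self-sustaining.

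For the quantitative bound I would rerun this argument keeping track of constants, in the spirit of \cite{EC11,EM14}. Since the principal part $\M(\nabla h)$ never changes, the constant $C_0$ in the divergence-form Schauder estimate is independent of $m$. A careful Leibniz accounting bounds $\|F_m\|_{{\rm C}^\alpha}+\|f_m\|_{{\rm C}^\alpha}+\|\text{boundary data}\|_{{\rm C}^{1+\alpha}}$ by sums of the form $C\sum_{k}\binom{m}{k}\|\partial_q^k h\|_{{\rm C}^{1+\alpha}}\|\partial_q^{m-k}h\|_{{\rm C}^{1+\alpha}}$ (and trilinear analogues, costing only an extra factor $C^m$) plus a term linear in $\|\partial_q^{m-1}h\|_{{\rm C}^{1+\alpha}}$. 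Inserting the inductive bound $\|\partial_q^j h\|_{{\rm C}^{1+\alpha}}\le L^{j-2}(j-3)!$ and using the elementary estimate $\sum_k\binom{m}{k}(k-3)!\,(m-k-3)!\le C(m-3)!$, one finds that $C_0$ times the whole contribution is at most $C_0 C\,L^{m-4}(m-3)!\le L^{m-2}(m-3)!$ provided $L^2\ge C_0 C$. The low-index terms and the cases $m\in\{3,4,5\}$ are handled directly and absorbed into the choice of $L$, which in the end depends on $\|h\|_{{\rm C}^{1+\alpha}(\ov\0)}$, $\sigma$, $\lambda$, $\|\ov\rho\|_{W^1_r}$, $\|\beta\|_{L_r}$, the ellipticity constant of $\eqref{eq:Q'}_1$ and $C_0$, but not on $m$.

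The hard part will be the interplay of the interior estimate with the nonlocal boundary condition. Differentiating the boundary relation in $q$ raises the tangential order of $h$ present on $p=0$, so that the Dirichlet data for $w_m$ nominally involves $w_{m+1}$ (and $w_m$) on $p=0$; one must exploit the order $-1$ character of the nonlocal term---equivalently, the two-derivative smoothing of $(1-\partial_q^2)^{-1}$---to absorb this top contribution while keeping the a priori estimate linear in the highest-order norm and with an $m$-uniform constant. This is precisely the feature that makes the ``Dirichlet condition perturbed at order $-1$'' reformulation of \cite{MM14,CM14b} indispensable here. A secondary point needing care is that the powers of $L$ generated by the Leibniz sums must leave room for the fixed Schauder constant, which is why one works with the sharp bound $L^{m-2}(m-3)!$ rather than a cruder $C^m m!$.
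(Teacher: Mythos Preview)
Your proposal is correct and matches the approach the paper has in mind: the paper omits the proof entirely, pointing to \cite[Theorem~5.1]{MM14}, and the scheme you outline---differentiate in $q$ (exploiting that $\ov\rho$, $B$, $\beta$ depend only on $p$), obtain a linear divergence-form elliptic problem for $w_m=\partial_q^m h$ with the \emph{same} ${\rm C}^\alpha$ principal part for every $m$, apply Schauder theory with an $m$-independent constant, handle the nonlocal boundary condition via the order~$-1$ smoothing of $(1-\partial_q^2)^{-1}$, and do the Leibniz/factorial bookkeeping---is exactly that argument. One small caveat on ordering: in the paper $h\in W^2_r(\Omega)$ is derived \emph{after} Theorem~\ref{T:RR} (Proposition~\ref{P:51}), so for the base step you should rely on your difference-quotient alternative rather than assuming $W^2_r$ up front.
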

 
 The proof of Theorem \ref{T:RR} is quite technical, but very similar to that of \cite[Theorem 5.1]{MM14} and is therefore omitted.

  \begin{rem}\label{R:3}
     The estimate \eqref{L:AST} implies in particular that all the streamlines, including the wave surface, of the corresponding strong solution to~\eqref{eq:Q'} and~\eqref{C4}, 
     see Proposition~\ref{P:51} and Theorem~\ref{T:Ep},
  are real-analytic graphs.
   Similar results  for classical solutions to~\eqref{eq:Q'} and~\eqref{C4} have been obtained in \cite{HM,W13} 
   under the more restrictive assumption  that $\ov\rho'$, $\beta\in{\rm C}^\alpha([p_0,0])$. 
   We point out that the study of the a priori regularity  of homogeneous but rotational waves has been initiated in \cite{EC11}, see also \cite{EM14}.
  \end{rem}
  
  The next lemma plays an important role in the proof of Theorem~\ref{MT1}~(iii).
  \begin{lemma}\label{L:LLC}
  Given two solutions  $(\lambda_i,h_i)\in(0,\infty)\times\X$, $i=1,\, 2$,  to \eqref{BP1}, there exists a constant
   $C=C(\lambda_1,\lambda_2,\|h_1\|_{{\rm C^{1+\alpha}(\ov\0)}},\|h_2\|_{{\rm C^{1+\alpha}(\ov\0)}},\|\p_qh_{2}\|_{{\rm C^{1+\alpha}(\ov\0)}})$ such that  
  \begin{align}\label{DEP}
  \|\p_qh_1-\p_qh_2\|_{{\rm C^{1+\alpha}(\ov\0)}}\leq C(|\lambda_1-\lambda_2|+ \|h_1-h_2\|_{{\rm C^{1+\alpha}(\ov\0)}}).
  \end{align}
  \end{lemma}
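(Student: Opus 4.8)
The idea is to exploit the fact that $\partial_q h_i$ solves a linear elliptic boundary value problem obtained by differentiating \eqref{BP1} with respect to $q$, and then to estimate the difference $\partial_q h_1 - \partial_q h_2$ using the Schauder-type (or $W^2_r$/H\"older) estimates for that linear problem together with the product/algebra properties \eqref{PR1}--\eqref{PR2}. First I would recall from Theorem \ref{T:RR} that any weak solution $h$ to \eqref{eq:Q'} and \eqref{C4} satisfies $\partial_q h\in {\rm C}^{1+\alpha}(\ov\0)$, so that all the quantities appearing below are well-defined. Differentiating the equation $\kF_1(\lambda_i,h_i)=0$ formally with respect to $q$, the function $v_i:=\partial_q h_i$ satisfies a second-order equation in divergence form whose top-order coefficients are built algebraically from $\lambda_i$, $h_{i,q}$, $h_{i,p}+H'$ and reciprocals of $h_{i,p}+H'$ (which are bounded away from zero by $h_i\in\cO$), and whose lower-order terms involve $\ov\rho$, $\ov\rho'$, $\beta$, $B$; similarly, differentiating $\kF_2(\lambda_i,h_i)=0$ gives a boundary condition for $v_i$ on $p=0$ of the form $v_i = (1-\partial_q^2)^{-1}\tr_0[\cdots]$, i.e. after applying $(1-\partial_q^2)$ a relation that is, modulo the smoothing operator $(1-\partial_q^2)^{-1}$, a Dirichlet-type condition perturbed by a nonlocal term of order $-1$; on $p=p_0$ one has $v_i=0$.

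The main step is then to subtract the two problems. Writing $w:=v_1-v_2=\partial_q h_1-\partial_q h_2$, one gets a linear elliptic problem for $w$ of the schematic form
\begin{align*}
\mathcal{A}_1 w = (\mathcal{A}_2-\mathcal{A}_1)v_2 + \text{(lower order difference terms)}\quad\text{in }\0,
\end{align*}
with a corresponding boundary relation on $p=0$ and $w=0$ on $p=p_0$, where $\mathcal{A}_i$ denotes the linear operator associated to $(\lambda_i,h_i)$. The right-hand side is, by the algebra property \eqref{PR2} and the mean value theorem applied to the smooth dependence \eqref{REG} of $\kF$ on its arguments, controlled in the appropriate norm by $\big(|\lambda_1-\lambda_2|+\|h_1-h_2\|_{{\rm C}^{1+\alpha}}\big)$ times a constant depending on $\lambda_1,\lambda_2$, $\|h_1\|_{{\rm C}^{1+\alpha}}$, $\|h_2\|_{{\rm C}^{1+\alpha}}$ and — crucially, since $v_2$ appears as a multiplier on the right — also on $\|\partial_q h_2\|_{{\rm C}^{1+\alpha}}$. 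Then I would invoke the isomorphism property used already in the proof of Lemma \ref{L:31} (based on \cite[Theorem 8.34]{GT01}), namely that the principal linear operator $h\mapsto\big((h_q/H')_q+\lambda^2(h_p/H'^3)_p,\ \tr_0 h\big)$ — or rather its variant with the frozen coefficients coming from $(\lambda_1,h_1)$ — is an isomorphism from $\X$ onto $\Y$, with the lower-order and nonlocal terms being relatively compact perturbations that do not destroy the a priori estimate once one knows $w$ exists and is the unique solution. This yields
\begin{align*}
\|w\|_{{\rm C}^{1+\alpha}(\ov\0)}\le C\,\|\text{right-hand side}\|_{\Y}\le C\big(|\lambda_1-\lambda_2|+\|h_1-h_2\|_{{\rm C}^{1+\alpha}(\ov\0)}\big),
\end{align*}
which is exactly \eqref{DEP}.

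The main obstacle I anticipate is bookkeeping the coefficient differences at the level of the $\Y_1$-norm: the operator $\kF_1$ is in divergence form but differentiating it in $q$ produces terms like $\partial_p$ of a difference of rational expressions in $h_{i,q}, h_{i,p}+H'$, and one must place these in the space $\Y_1=\{\varphi+\partial_q\psi_1+\partial_p\psi_2\}$ correctly — i.e. identify which pieces are genuinely $L_\infty$, which must be written as $\partial_q$ or $\partial_p$ of a ${\rm C}^\alpha$ function, and verify the parity (even/odd in $q$) constraints are respected by $w$ and the data. This is where the algebra property \eqref{PR2} and the chain rule \eqref{PR1} do the real work, together with the fact that $h_{i,p}+H'$ is bounded below so that all reciprocals remain in $W^1_r\cap {\rm C}^\alpha$. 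A secondary technical point is the nonlocal term $\int_0^1\frac{\lambda^2+h_q^2}{(h_p+H')^2}\,dq$ in $\kF_2$ and the operator $(1-\partial_q^2)^{-1}$: both are benign — the former is a constant (in $q$) whose difference is again estimated by the algebra property, and the latter is a bounded smoothing map ${\rm C}^\alpha\to{\rm C}^{2+\alpha}$ — but they must be carried through the computation. Once the linear problem for $w$ and the norm of its data are set up, the conclusion is immediate from the isomorphism/Fredholm machinery already invoked in Section~\ref{Sec:3}.
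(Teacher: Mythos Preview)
Your overall plan---differentiate the weak formulation in $q$, subtract the two problems, and estimate $W:=\partial_q h_1-\partial_q h_2$ via elliptic theory---is exactly the paper's strategy. However, there is a genuine gap in the way you close the estimate. You propose to use the isomorphism from Lemma~\ref{L:31} (citing \cite[Theorem~8.34]{GT01}) and write $\|W\|_{{\rm C}^{1+\alpha}}\le C\|\text{RHS}\|_\Y$. But Lemma~\ref{L:31} only shows that $\partial_h\kF(\lambda,0)$ is Fredholm of index zero, not that it (or its variable-coefficient analogue at $(\lambda_1,h_1)$) is an isomorphism; indeed at $\lambda_*$ the kernel is one-dimensional. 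Your remark that the compact lower-order and nonlocal pieces ``do not destroy the a priori estimate once one knows $w$ exists and is the unique solution'' is circular: uniqueness is precisely what one lacks. If instead you keep only the principal part on the left and move the rest to the right, the boundary data $\varphi=(1-\partial_q^2)^{-1}\tr_0[A_1W+B_1W_q+C_1W_p+\cdots]$ still depends on $W$ through $W_q,W_p$, so the right-hand side contains a $\|W\|$-term that is \emph{not} of strictly lower order in a way you can absorb without further work.

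The paper handles this as follows. It writes the boundary condition as a Dirichlet condition $W=\varphi$ on $p=0$ and invokes the Schauder \emph{a priori} estimate \cite[Theorem~8.33]{GT01} (not the isomorphism theorem), which yields
\[
\|W\|_{{\rm C}^{1+\alpha}}\le C\big(\|W\|_{{\rm C}}+\|\varphi\|_{{\rm C}^{1+\alpha}}+\text{coefficient-difference terms}\big).
\]
The key device you are missing is then to exploit the smoothing $(1-\partial_q^2)^{-1}\in\kL({\rm C}^{\alpha/2},{\rm C}^{2+\alpha/2})$ to obtain $\|\varphi\|_{{\rm C}^{1+\alpha}}\le C\|W\|_{{\rm C}^{1+\alpha/2}}+\cdots$, and to absorb this intermediate norm via the interpolation
\[
{\rm C}^{1+\alpha/2}(\ov\0)=({\rm C}^{\alpha}(\ov\0),{\rm C}^{1+\alpha}(\ov\0))_{1-\alpha/2,\infty}.
\]
After absorption one is left with $\|W\|_{{\rm C}^{\alpha}}\le\|h_1-h_2\|_{{\rm C}^{1+\alpha}}$ on the right, giving \eqref{DEP}. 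Without this interpolation/absorption step the argument does not close.
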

  \begin{proof}
  Since $w_i:=\p_qh_i\in {\rm C^{1+\alpha}(\ov\0)}$, cf. Theorem \ref{T:RR}, differentiation of the
   relations in Definition~\ref{D:31} shows that $w_i$ is a weak solution to the uniformly elliptic boundary value problem
\begin{equation*} 
\left\{
\begin{array}{rrllll}
\big(a_{i,11}w_{i,q} \big)_q+\big(a_{i,21}w_{i,p}\big)_q+\big(a_{i,12} w_{i,q}\big)_p+
\big( a_{i,22}w_{i,p}+b_i w_i\big)_p+c_i w_{i,p}&=&0 & \text{in $ \0,$}\\[1ex]
 w_i-(1-\p_q^2)^{-1}\tr_0(A_iw_i+B_iw_{i,q}+C_iw_{i,p})&=&0 & \text{on $p=0$,}\\[1ex]
w_i&=&0 & \text{on $ p=p_0$},
\end{array}
\right.
\end{equation*}
where $a_{i,k\ell},\, b_i, \, c_i,\, A_i,\, B_i,\, C_i\in {\rm C^{\alpha}(\ov\0)}$ are given by
\begin{align*}
a_{i,11}&=\cfrac{1}{h_{i,p}},\qquad a_{i,12}=a_{i,21}=-\cfrac{h_{i,q}}{h_{i,p}^2},\qquad a_{i,22} =\cfrac{\lambda_i^2+h_{i,q}^2}{h_{i,p}^3},
 \quad b_i=-\lambda_i^2g\ov\rho, \qquad c_i=\lambda_i^2g\ov\rho,\\[1ex]
A_i&=1-g \ov\h\frac{ (\lambda_i^2+h_{i,q}^2)^{3/2}}{\sigma\lambda_i },\qquad C_i=\frac{ (\lambda_i^2+h_{i,q}^2)^{5/2} }{\sigma\lambda_i ^3h_{i,p}^3},  \\[1ex]
B_i&=-\frac{3(\lambda_i^2+h_{i,q}^2)^{1/2}h_{i,q}}{2\sigma\lambda_i^3}\left(\frac{\lambda_i^2+h_{i,q}^2}{h_{i,p}^2}
+2\lambda_i^2g \ov\h(h_i-d)-\int_{0}^1\frac{\lambda_i^2+h_{i,q}^2}{h_{i,p}^2}\, dq\right)
-\frac{ (\lambda_i^2+h_{i,q}^2)^{3/2}h_{i,q}}{\sigma\lambda_i ^3h_{i,p}^2}.
\end{align*}
Hence, for the difference $W:=w_1-w_2,$ we find that
   \begin{equation*} 
\left\{
\begin{array}{rrllll}
\big(a_{1,11}W_{q} \big)_q+\big(a_{1,21}W_{p}\big)_q+\big(a_{1,12} W_{q}\big)_p+
\big( a_{1,22}W_{p}+b_i W\big)_p+c_1 W_{p} &=& f &\text{in $ \0,$}\\[1ex]
 W&=& \varphi & \text{on $p=0$},\\[1ex]
W&=&0 & \text{on $ p=p_0$},
\end{array}
\right.
\end{equation*}
where
\begin{align*}
f&=\big((a_{2,11}-a_{1,11})w_{2,q} \big)_q+\big((a_{2,21}-a_{1,21})w_{2,p}\big)_q+\big((a_{2,12}-a_{1,12}) w_{2,q}\big)_p\\[1ex]
&\hspace{0.424cm}+\big( (a_{2,22}-a_{1,22})w_{2,p}+(b_2-b_1) w_2\big)_p+(c_2-c_1) w_{2,p},\\[1ex]
\varphi&=(1-\p_q^2)^{-1}\tr_0(A_1W+B_1W_{q}+C_1W_{p}+w_2(A_1-A_2)+w_{2,q}(B_1-B_2)+w_{2,p}(C_1-C_2)).
\end{align*}
Using \cite[Theorem 8.33]{GT01}, we estimate
\begin{align*}
\|W\|_{{\rm C^{1+\alpha}(\ov\0)}}&\leq C\big( \|W\|_{{\rm C(\ov\0)}}+ \|\varphi\|_{{\rm C^{1+\alpha}(\R)}}+ \|(c_2-c_1) w_{2,p}\|_{{\rm C(\ov\0)}}
+ \|(a_{2,11}-a_{1,11})w_{2,q}\|_{{\rm C^\alpha(\ov\0)}}\\[1ex]
&\hspace{1cm}+ \|(a_{2,21}-a_{1,21})w_{2,p}\|_{{\rm C^\alpha(\ov\0)}}+\|(a_{2,12}-a_{1,12}) w_{2,q}\|_{{\rm C^\alpha(\ov\0)}}\big)\\[1ex]
&\hspace{1cm}+\|(a_{2,22}-a_{1,22})w_{2,p}+(b_2-b_1) w_2\|_{{\rm C^\alpha(\ov\0)}}\big)\\[1ex]
&\leq C\big( \|W\|_{{\rm C^{1+\alpha/2}(\ov\0)}}+ \|\varphi\|_{{\rm C^{1+\alpha}(\R)}}+|\lambda_1-\lambda_2|+\|h_2-h_1\|_{{\rm C^{1+\alpha}(\ov\0)}}\big),
\end{align*}
where, in virtue of $(1-\p_q^2)^{-1}\in\kL({{\rm C^{\alpha/2}(\R)}}, {{\rm C^{2+\alpha/2}(\R)}})$, it holds that
\begin{align*}
\|\varphi\|_{{\rm C^{1+\alpha}(\R)}}&\leq C\|\varphi\|_{{\rm C^{2+\alpha/2}(\R)}}\\[1ex]
&\leq C\| A_1W+B_1W_{q}+C_1W_{p}+w_2(A_1-A_2)+w_{2,q}(B_1-B_2)+w_{2,p}(C_1-C_2) \|_{{\rm C^{\alpha/2}(\ov\0)}}\\[1ex]
&\leq C\big( \|W\|_{{\rm C^{1+\alpha/2}(\ov\0)}}+  |\lambda_1-\lambda_2|+\|h_2-h_1\|_{{\rm C^{1+\alpha}(\ov\0)}}\big).
\end{align*}
Hence
\begin{align*}
\|W\|_{{\rm C^{1+\alpha}(\ov\0)}}& \leq C\big( \|W\|_{{\rm C^{1+\alpha/2}(\ov\0)}}+|\lambda_1-\lambda_2|+\|h_2-h_1\|_{{\rm C^{1+\alpha}(\ov\0)}}\big),
\end{align*}
and the desired claim follows by the interpolation result $ {\rm C^{1+\alpha/2}(\ov\0)}=({\rm C^{\alpha}(\ov\0)},{\rm C^{1+\alpha}(\ov\0)})_{1-\alpha/2,\infty}.$
  \end{proof}

  We next prove that weak solutions to~\eqref{eq:Q'} and \eqref{C4} are in fact strong solutions (as defined Theorem~\ref{T:Ep}~(iii)).
  
  \begin{prop}\label{P:51} Let $\lambda>0$ and let $h\in {\rm C}^{1+\alpha}(\ov\0)$ denote a weak solution to~\eqref{eq:Q'} and \eqref{C4}.
  Then $h\in W^2_r(\0)$ is a strong solution to~\eqref{eq:Q'} and~\eqref{C4}.
  \end{prop}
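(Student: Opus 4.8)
The plan is to bootstrap the regularity of a weak solution $h\in {\rm C}^{1+\alpha}(\ov\0)$ in the $p$-direction, the $q$-direction being already fully taken care of by Theorem~\ref{T:RR}. First I would record what that theorem gives us: $\p_q^m h\in {\rm C}^{1+\alpha}(\ov\0)$ for every $m\in\N$, so in particular $h_q,h_{qq}\in {\rm C}^{1+\alpha}(\ov\0)$, and hence the mixed derivatives $h_{qp}=\p_p h_q$ and $h_{qqp}$ exist and lie in ${\rm C}^\alpha(\ov\0)$. Since $h$ satisfies \eqref{C4} we have $\inf_\0 h_p>0$, and since $\lambda^2+h_q^2\ge\lambda^2>0$, repeated use of the algebra property \eqref{PR2} and of the weak chain rule \cite[Lemma 7.5]{GT01} puts $1/h_p$, $1/h_p^2$ and $1/(\lambda^2+h_q^2)$ in ${\rm C}^\alpha(\ov\0)$. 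Throughout, $1$-periodicity reduces everything to one bounded period, so the relevant Hölder spaces embed into $L_r$; the only second derivative of $h$ not yet controlled is then $h_{pp}$.

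The key step — and, I expect, the main obstacle — is to extract $h_{pp}\in L_r(\0)$ from the divergence-form equation of Definition~\ref{D:31}, the difficulty being that the $p$-derivative of $1/h_p^2$ cannot be expanded directly. Writing $F:=(\lambda^2+h_q^2)/(2h_p^2)$, the product rule \eqref{PR1} gives $(h_q/h_p)_q=h_{qq}/h_p-h_qh_{qp}/h_p^2\in {\rm C}^\alpha(\ov\0)$ and, using $\ov\rho\in W^1_r((p_0,0))\hookrightarrow {\rm C}^\alpha$ together with $B'=\beta$, that $\p_p(\lambda^2B+\lambda^2g\ov\rho(h-d))\in L_r(\0)$. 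Substituting both into the weak equation, the two $\lambda^2g\ov\rho h_p$ terms cancel and one is left with $F_p=(h_q/h_p)_q-\lambda^2\beta-\lambda^2g\ov\rho'(h-d)=:\Psi\in L_r(\0)$ in $\mathcal{D}'(\0)$. Now $1/h_p^2=2F/(\lambda^2+h_q^2)$ is a product of two ${\rm C}^\alpha$-functions whose $p$-derivatives lie respectively in $L_r$ and in ${\rm C}^\alpha$, so \eqref{PR1} gives $\p_p(1/h_p^2)\in L_r(\0)$; together with $\p_q(1/h_p^2)=-2h_{qp}/h_p^3\in {\rm C}^\alpha(\ov\0)$ this shows $1/h_p^2\in W^1_r(\0)$. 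Two applications of the weak chain rule \cite[Lemma 7.5]{GT01}, first with $t\mapsto 1/t$ (licit because $1/h_p^2$ is bounded away from $0$) and then with $t\mapsto t^{1/2}$ (licit because $h_p^2\ge(\inf_\0 h_p)^2>0$), then yield $h_p^2\in W^1_r(\0)$ and finally $h_p\in W^1_r(\0)$, i.e. $h_{pp}=\p_p(h_p^2)/(2h_p)\in L_r(\0)$. Since $h_{qq},h_{qp}\in {\rm C}^\alpha(\ov\0)\hookrightarrow L_r$ per period and $h\in {\rm C}^{1+\alpha}(\ov\0)$, this gives $h\in W^2_r(\0)$.

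It remains to obtain $\tr_0 h\in W^2_r(\R)$ and to realize the equations strongly. For the trace I would evaluate the boundary relation of Definition~\ref{D:31} at $p=0$: the traces of $h$ and $h_q$ onto $p=0$ lie in ${\rm C}^{1+\alpha}(\R)$, and $\tr_0(1/h_p^2)\in {\rm C}^{1+\alpha}(\R)$ because $\p_q\tr_0(1/h_p^2)=\tr_0(-2h_{qp}/h_p^3)\in {\rm C}^\alpha(\R)$; since $\lambda^2+h_q^2\ge\lambda^2>0$, the bracketed expression is a composition of ${\rm C}^{1+\alpha}(\R)$-functions with smooth maps, hence again in ${\rm C}^{1+\alpha}(\R)$, and from $(1-\p_q^2)^{-1}\in\mathcal{L}({\rm C}^{1+\alpha}(\R),{\rm C}^{3+\alpha}(\R))$ we conclude $\tr_0 h\in {\rm C}^{3+\alpha}(\R)\hookrightarrow W^2_r(\R)$ on one period. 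Finally, now that $h\in W^2_r(\0)$, the product rule \eqref{PR1} lets one expand the divergence-form equation into $\eqref{eq:Q'}_1$, which then holds in $L_r(\0)$; applying $(1-\p_q^2)$ to the boundary relation recovers $\eqref{eq:Q'}_3$ in $L_r(\R)$; and $\eqref{eq:Q'}_2$ together with \eqref{C4} hold by the very definition of a weak solution. Hence $h$ is a strong solution to \eqref{eq:Q'} and \eqref{C4} in the sense of Theorem~\ref{T:Ep}(iii). The technical heart of the argument is the middle paragraph: one must arrange the chain of weak-derivative manipulations so as to isolate $h_{pp}$, checking at each stage that \eqref{PR1} and \cite[Lemma 7.5]{GT01} apply — which is exactly where the positivity $\inf_\0 h_p>0$, the bound $\lambda^2+h_q^2\ge\lambda^2$, and the $q$-regularity from Theorem~\ref{T:RR} are indispensable.
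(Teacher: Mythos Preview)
Your proof is correct and follows essentially the same approach as the paper's: use Theorem~\ref{T:RR} for the $q$-regularity, then read off from the divergence-form equation that $F=(\lambda^2+h_q^2)/(2h_p^2)$ has its weak $p$-derivative in $L_r$, and apply the weak chain rule to conclude $h_p\in W^1_r(\0)$. The only organizational differences are that the paper keeps $F+\lambda^2B+\lambda^2 g\ov\rho(h-d)$ bundled and invokes \eqref{PR3} to land this sum in ${\rm C}^{1+\alpha}(\ov\0)$ before subtracting, and that for $\tr_0 h\in W^2_r(\R)$ it simply cites the real-analyticity furnished by Theorem~\ref{T:RR} rather than arguing via the boundary relation.
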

  \begin{proof}
  In virtue of Theorem \ref{T:RR} we have $h_q\in {\rm C}^{1+\alpha}(\ov\0),$ hence $\p_qh_p=\p_ph_q\in {\rm C}^{\alpha}(\ov\0).$ 
  This in turn implies that 
  $\p_qh_p$ is the classical derivative of $h_p$ with respect to $q$.  
  Recalling that 
  \begin{align*}
 \Big(\frac{h_q}{h_p}\Big)_q - \Big(\frac{\lambda^2 + h_q^2}{2h_p^2} + \lambda^2 B + \lambda^2 g \ov\rho (h-d) \Big)_p +\lambda^2 g \ov\rho h_p=0\qquad\text{in $\mathcal{D}'(\0)$},
\end{align*}
it follows that
 \begin{align*}
  \Big(\frac{\lambda^2 + h_q^2}{2h_p^2} + \lambda^2 B + \lambda^2 g \ov\rho (h-d) \Big)_q\in {\rm C}^{\alpha}(\ov\0),\quad 
    \Big(\frac{\lambda^2 + h_q^2}{2h_p^2} + \lambda^2 B + \lambda^2 g \ov\rho (h-d) \Big)_p\in {\rm C}^{\alpha}(\ov\0),
\end{align*}
and  \eqref{PR3} in turn yields
 \begin{align*}
  \frac{\lambda^2 + h_q^2}{2h_p^2} + \lambda^2 B + \lambda^2 g \ov\rho (h-d) \in {\rm C}^{1+\alpha}(\ov\0).
\end{align*}
Consequently, 
\[
 \frac{\lambda^2 + h_q^2}{2h_p^2}\in W^1_r(\0)\cap {\rm C}^{\alpha}(\ov\0)
\]
and the repeated use of \cite[Lemma 7.5]{GT01} finally yields $h_p\in W^1_r(\0)$. Since  $h_q\in {\rm C}^{1+\alpha}(\ov\0)$, it follows  that $h\in W^2_r(\0)$ is
 a strong solution to~\eqref{eq:Q'} and~\eqref{C4} (as ${\rm tr}_0 \, h$ is real-analytic, the condition ${\rm tr}_0\, h\in W^2_r(\R)$ is obvious).
  \end{proof}

  We complete this section with the proof of  Theorem \ref{MT1}.
  
  \begin{proof}[Proof of Theorem \ref{MT1}.]
 Gathering \eqref{TS}, \eqref{REG}, Proposition~\ref{Prop:1}, and Proposition \ref{Prop:2} we find that all  assumptions of  the theorem on bifurcation from simple eigenvalues
 by Crandall and Rabinowitz, cf.  \cite[Theorem~1.7]{CR71}, are satisfied in the context of  the bifurcation problem \eqref{BP1}.
  This abstract result yields the existence  of a local smooth curve
   \begin{align}\label{BCu1}
      [s\mapsto (\lambda(s),h(s))]:(-\e,\e)\to (0,\infty)\times \X,
   \end{align}
where  $\e>0$  is in general a small number,  such that $\mathcal{F}(\lambda(s),h(s))=0$ for all $|s|<\e$.
 Moreover, $\lambda(0)=\lambda_*$ and 
  \begin{align}\label{AAS0}
  h(s)=s(w_*+\chi(s)) 
  \end{align}
  with $\chi\in{\rm  C}^\infty((-\e,\e),\X)$ satisfying $\chi(0)=0$.
  Besides, there exists a ball in $(0,\infty)\times \X$, with  center  $(\lambda_*,0)$,  which does not contain other solutions but those mentioned above or trivial solutions 
  $(\lambda,0)$.
 Letting $H$ denote the laminar flow solution  fixed at the beginning of the section, it follows from Proposition \ref{P:51} that
$H+h(s)$ is a strong solution   to~\eqref{eq:Q'} and~\eqref{C4} having minimal wavelength $1$ for $s\neq0$.
Each pair $(\lambda(s),H+h(s))$ corresponds to a solution  $(u(s)-c,v(s),P(s),\rho(s),\eta(s))$  to~\eqref{eq:1}--\eqref{C1} which lies on the curve $\mathcal{C}$ in Theorem \ref{MT1}
and which has minimal period $\lambda(s)$ (provided that $s\neq0$), cf. \eqref{Scal} and Theorem \ref{T:Ep}.
 This proves the claims (i), (ii), and (iv) of Theorem~\ref{MT1}.
 
It remains to show  that $\eta(s)$ has,  for $s\neq0$, precisely one maximum and one
minimum in $[0,\lambda(s))$  and that $\eta(s)$ it is strictly monotone between
the points where the global extrema are attained.
 To this end we claim that
\begin{align}\label{AAS}
   \p_q\chi(s)\underset{s\to0}\to0 \quad \text{in ${\rm C}^{1+\alpha/2}(\ov\0)$.}
  \end{align}
Indeed, 
 Lemma \ref{L:LLC}  and \eqref{w*} imply that there exists a constant $C>0$ such that 
\begin{align*}
  \| \p_q\chi(s)\|_{{\rm C}^{1+\alpha}(\ov\0)}\leq \Big\|\frac{\p_qh(s)-\p_qh(0)}{s}\Big\|_{{\rm C}^{1+\alpha}(\ov\0)}+\|\p_qw_*\|_{{\rm C}^{1+\alpha}(\ov\0)}\leq C  
  \end{align*}
  for all $|s|<\e/2$.
  Additionally,  using the  differentiability  of $h$ at $s=0$,  we have
  \begin{align*}
  \| \p_q\chi(s)\|_{{\rm C}^{\alpha}(\ov\0)}= \Big\|\frac{\p_qh(s)-\p_qh(0)-s\p_qh'(0)}{s}\Big\|_{{\rm C}^{\alpha}(\ov\0)}
  \leq\Big\|\frac{h(s)-h(0)-sh'(0)}{s}\Big\|_{{\rm C}^{1+\alpha}(\ov\0)}\underset{s\to0}\to0.  
  \end{align*}
  These relations together with the interpolation result $ {\rm C^{1+\alpha/2}(\ov\0)}=({\rm C^{\alpha}(\ov\0)},{\rm C^{1+\alpha}(\ov\0)})_{1-\alpha/2,\infty}$ immediately yield \eqref{AAS}. 
  Since $h(s,\cdot,0)=s(w_*(\cdot,0)+\chi(s,\cdot,0))$ and $\chi(s,\cdot,0)\to0$ in ${\rm C}^{2}(\R)$ for $s\to0$, cf. \eqref{AAS0}-\eqref{AAS},
  standard arguments show that  the monotonicity properties of
  $w_*$ are inherited  by  $h(s,\cdot,0)$ provided that $\e$ is sufficiently small (see, e.g., \cite{W06b}). 
  Recalling that the wave profile is parametrized by the function $\eta(s)=h(s,\cdot,0)-d,$ we have established (iii) and the proof is complete. 
   \end{proof}\bigskip

\section*{Acknowledgement}
Patrik Knopf and Bogdan-Vasile Matioc were partially supported by the RTG 2339 ''Interfaces, Complex Structures, and Singular Limits''
of the German Science Foundation (DFG). The support is gratefully acknowledged.

\bibliographystyle{siam}
\bibliography{EKLM}
\end{document}